\newcommand*{\SET}[1]  {\ensuremath{\mathbb{#1}}}
\newcommand{\R}{\SET{R}}
\newcommand{\Z}{\SET{Z}}
\newcommand{\J}{\mathcal{J}}
\newcommand{\E}{\SET{E}}
\newcommand{\Q}{\SET{Q}}
\newcommand{\N}{\SET{N}}
\newcommand{\conv}{\operatorname{conv}}
\newcommand{\Graph}{\operatorname{Graph}}
\newcommand{\crit}{\operatorname{crit}}
\newcommand{\Jac}{\operatorname{Jac}}
\newcommand{\backprop}{\operatorname{backprop}}
\DeclareMathOperator{\di}{d\!}
\newcommand{\Ran}{\R_{\operatorname{an}}}
\newcommand{\Ranexp}{\R_{\operatorname{an,exp}}}
\newcommand{\diff}{\operatorname{diff}}
\newtheorem{remark}{Remark}
\newtheorem{claim}{Claim}
\newtheorem{assumption}{Assumption}
\newtheorem{definition}{Definition}
\newtheorem{theorem}{Theorem}
\newtheorem{lemma}{Lemma}
\newtheorem{proposition}{Proposition}
\newtheorem{corollary}{Corollary}
\newenvironment{proof}[1][]{\noindent {\bf Proof#1:\;}}{\hfill $\Box$}
\title{Subgradient sampling for nonsmooth nonconvex minimization}
\author{J\'er\^ome Bolte\thanks{Toulouse School of Economics, Universit\'e de Toulouse, France.}
\and Tam Le\footnotemark[2] \and
 Edouard Pauwels \thanks{IRIT, CNRS,  Universit\'e de Toulouse, ANITI, Toulouse France. Institut Universitaire de France.}}
\begin{document}
\maketitle

\begin{abstract}
 Risk minimization for nonsmooth nonconvex problems naturally leads to first-order sampling or, by an abuse of terminology, to stochastic subgradient descent. We establish the convergence of this method in the path-differentiable case, and describe more precise results under additional geometric assumptions. We recover and improve results from Ermoliev-Norkin \cite{ermol1998stochastic} by using a different approach: conservative calculus and the ODE method.  In the definable case, we show that first-order subgradient sampling avoids artificial critical point with probability one and applies moreover to a large range of risk minimization problems in deep learning, based on the backpropagation oracle.
  As byproducts of our approach, we obtain several results on integration of independent interest, such as an interchange result for conservative derivatives and integrals, or the definability of set-valued parameterized integrals.
\end{abstract}

\section{Introduction}

We consider possibly nonconvex and nonsmooth risk minimization problems of the form 
\begin{equation}
    \label{eq:expectation_minimization}
    \underset{w \in \R^p}{\min} \: \J(w):=\E_{\xi \sim P} \left[f(w, \xi)\right],
\end{equation}
where $P$ is a probability distribution on some measurable space $(S, \mathcal{A})$. We assume throughout the paper that $\mathcal{J}$ is bounded below. This type of problems has many applications, we refer for instance to \cite{Ruszczynski2020,Ruszczynski2021ASS} and references therein, for various examples in several fields. Our specific interest goes in particular to online deep learning \cite{sahoo2017online} and machine learning more broadly \cite{Bottou_2018}.
We consider a minimization approach through first-order sampling: our model is that of the stochastic gradient method\footnote{Also known under the generic acronym {\em SGD}.}, which in its classical form generates iterates $(w_k)_{k \in \N}$ through 
\begin{align}
    \label{eq:algo_intro}
    \left\{
        \begin{array}{ll}
            w_0 \in \R^p  \\
            w_{k+1} = w_k - \alpha_k v(w_k, \xi_k) \hspace{1cm} \text{for } k \in \N,
        \end{array}
    \right.
\end{align}
where $v(w_k, \xi_k)$ is a descent direction at $w_k$ for the function $f(\cdot, \xi_k)$.

When $f$ is smooth, $v$ may be taken to be the gradient of $f$, so that,  in expectation, the search direction boils down to the gradient:
\begin{equation}
    \E_{\xi \sim P} \left[\nabla_w f(\cdot, \xi)\right] =\nabla \E_{\xi \sim P} \left[f(\cdot, \xi) \right]=\nabla \J
    \label{eq:interchangeGradientIntegral}
\end{equation}
where the first equality follows under mild integrability conditions. This is a well-known case for which many convergence results are available, see, e.g., \cite{kushner2003stochastic}. 

In the nonsmooth nonconvex world, it has become classical to consider Clarke subgradient oracles.  In that case the average update direction in \eqref{eq:algo_intro} falls  into $\E_{\xi \sim P} [\partial^c_{w} f(\cdot, \xi)]$; but contrary to \eqref{eq:interchangeGradientIntegral},  
we merely have
 \begin{equation} \label{eq:expectationclarkesubgradient} 
 \E_{\xi \sim P} \left[\partial^c_{w} f(\cdot, \xi) \right] \supset \partial^c \E_{\xi \sim P} \left[f(\cdot, \xi) \right] =\partial^c\J, 
 \end{equation}
see \cite[Theorem 2.7.3]{clarke1990optimization}. Without additional convexity or regularity assumptions, as in \cite{Majewski2018AnalysisON,Davis2020}, the inclusion is strict in general. In plain words, general subgradient sampling is not a stochastic subgradient method: expected increments may not be subgradients of the loss. As a consequence, the very nature of a first-order sampling method may generate undesired directions that could result in sporadically erratic movements or artificial and irrelevant steady states \cite{bolte2020mathematical}. The situation is even worse in deep learning where the oracle is based on backpropagation \cite{Rumelhart1986LearningRB}, which may add more artifacts to the dynamics, see, e.g., \cite{bolte2020mathematical} and references therein. Despite these issues, many real-world algorithms  are designed according to this model, justifying the need for theoretical support and guarantees to the practical success of these methods.

This paper aims at addressing these problems in the vanishing step size regime. We build upon conservative calculus introduced in \cite{bolte2019conservative}. This approach makes rigorous the use of formal subdifferentiation in a wide mathematical framework encompassing most nonconvex nonsmooth problems. One of its main features is that sum or composition of conservative gradients are conservative. Under mild assumptions,  we shall extend this property to integration, parameterized integrals of conservative gradients being conservative. An important advantage of this approach is its full compatibility with one of the core modern algorithms of large-scale optimization: backpropagation \cite{tensorflow2016, pytorch2019_9015, jax2018github}.

An essential series of works on subgradient sampling are those developed in  \cite{ermol1998stochastic}, followed also by \cite{Ruszczynski2020,Ruszczynski2021ASS}, using the generalized derivatives and the calculus introduced in Norkin \cite{norkin1980}, see  \Cref{section:norkin} for more details. 
They address, in particular, the interplay between expectations and subgradients under various assumptions and ensure as well convergence to ``generalized critical points". 
The ideas of \cite{ermol1998stochastic}, and some follow-up research \cite{norkin2021}, have been surprisingly overlooked by the stochastic optimization and machine learning communities, at least until recently\footnote{As for us,  it came to our knowledge in the finalization stage of this paper.}. There is a strong common point between the present article and Norkin's research since, key to our work, is a new first-order calculus. But there are also important differences  and additions that we highlight below:

$-$ We follow the conservative approach of \cite{bolte2019conservative} instead of the generalized derivative approach of \cite{norkin1980}, which are different techniques. Focus on the conservative case is motivated by a growing ``theory" close to machine learning applications: implicit differentiation \cite{bolte2021nonsmooth} with application to implicit neural networks \cite{bai2019}, nondifferentiable programming \cite{jaxopt_implicit_diff}, bi-level programming \cite{pedregosa2016,bertrand2020implicit}), differential equations \cite{marx2022path} which are naturally connected to Neural ODEs \cite{chen2018neural,dupont2019augmented}, or even partial minimization \cite{pauwels2021ridge}. Let us also mention that  conservativity is more general than differentiability in the sense of Norkin, see  \Cref{section:norkin}, even though both notions coincide in the semialgebraic case as recently established in \cite{davis2021conservative}.   

$-$ We provide a general and simple result on the avoidance of artificial critical points. First-order sampling and backpropagation create independently artificial critical points that can swamp the method. Under adequate subanalyticity  assumptions, matching many practical problems, we establish that  this does not occur, see  \Cref{section:geometryofstocopt}.
 Our theory is developed in close connection to semialgebraicity and subanalyticity techniques.  This provides a ``ready-to-use" flavor to our convergence results for online deep learning and also to obtain new results in the definable world. In particular, we provide conditions for set-valued integrands to result in definable parametrized integrals or expectations as well.

$-$ Another specificity of our work is to rely, for its asymptotic analysis, on the ``ODE approach" through the use of Benaim-Hofbauer-Sorin results \cite{benaim} on stochastic approximations.  The   versatility and  simplicity of this method allow for a quite direct extension to other types of first-order methods as, see, e.g., \cite{bianchi2021closed,gadat2018stochastic,castera2021inertial}. Definability  provides simple and explicit sufficient conditions for commonly used abstract hypotheses in this framework, such as path-differentiability of the risk and Sard's condition.

\medskip

The paper is organized as follows: we first present representative samples of our general results in \Cref{section:online_sgd_deeplearning} with an emphasis on applications to online deep learning and the role of semialgebraic, or definable optimization. In \Cref{section:conservativeofintegral}, we present our main theoretical results with, in particular, a theorem of conservative differentiation for integral functions. \Cref{section:geometryofstocopt} gathers results from o-minimal geometry and provides set-valued improvement of Cluckers-Miller's integration result \cite{Cluckers_2011} as well as avoidance result for artificial critical points. \Cref{section:proofresults} contains the proofs of \Cref{section:online_sgd_deeplearning} results.

\paragraph{Notations}  For $q \in \N^*$, $\mathcal{B}(\R^q)$ denotes the Borel sigma algebra on $\R^q$, $\lambda$ is the Lebesgue measure on $\R$. $\|\cdot \|$ denotes the Euclidean norm. For a subset $A$ of a normed vector space, $\conv A$ denotes the convex hull of $A$ and $\bar{A}$ its closure; $\dim A$  denotes its Hausdorff dimension. If in addition, $A$ is bounded then $\|A\| :=\sup\{\|y\| \ |  \ y\in A\}$. For $x \in \R^p$ $r > 0$, $B(x, r)$ is the open ball of center $x$ and radius $r$ with respect to the Euclidean norm.

Given $f : \R^p  \rightarrow \R$ locally Lipschitz continuous, Rademacher's theorem ensures that $f$ is differentiable on a set of full measure denoted $\diff_f$. Its Clarke subgradient is defined for $x \in \R^p$ as
\begin{equation*}
    \partial^c f(x) = \conv \left\{\lim_{k \to + \infty} \nabla f(x_k) \ \middle| \  x_k \in \diff_f, x_k \underset{k \to \infty}{\to} x \right\}.
\end{equation*}
Given  $g : \R^p \times \R^m \rightarrow \R$ such that for almost all $s \in \R^m$ $g(\cdot, s)$ is locally Lipschitz, $\partial^c_w g(\cdot,s)$ denotes the Clarke subgradient of $g(\cdot,s)$ whenever the latter is locally Lipschitz.

For $F: \R^p \rightarrow \R^q$ locally Lipschitz,   the Clarke Jacobian is defined as
\begin{equation*}
    \Jac^c F(x) = \conv \left\{\lim_{k \to + \infty} \Jac F(x_k) \ \middle| \  x_k \in \diff_{F}, x_k \underset{k \to \infty}{\to} x \right\},
\end{equation*}
and for $G : \R^p \times \R^m \rightarrow \R^q$  such that $G(\cdot, s)$ is locally Lipschitz for almost all $s \in \R^m$, $\Jac^c_w G(\cdot,s)$ is  the Clarke Jacobian  of $G(\cdot,s)$ for almost all $s \in \R^m$.

\section{Main results and online deep learning}

This section provides simplified formulations of our results and also gives applications to online deep learning.

\label{section:online_sgd_deeplearning}
\subsection{Subgradient sampling method} \label{subsection:subgradientsamplingmethod}
\paragraph{Framework} Let us consider the stochastic minimization problem (\ref{eq:expectation_minimization}) where $P$ is a fixed probability distribution, $f$ is possibly nonsmooth and nonconvex such that the risk function $\mathcal{J}$ is well defined and bounded below, in particular $f(x, \cdot)$ is $P$-integrable for all $x \in \R^p$. This problem is tackled through the {\em first-order sampling algorithm} (\ref{eq:algo_intro}). $(\xi_k)_{k \in \N}$ is a sequence of i.i.d. random variables with distribution $P$ and the mapping $v : \R^p \times \R^m \rightarrow \R^p$ is a selection of $\partial^c_w f : \R^p \times \R^m \rightrightarrows \R^p$, i.e., for almost all $s \in \R^m$, for all $ w \in \R^p$, $v(w, s) \in \partial^c_w f(w,s).$


\begin{assumption} \label{ass:convergence_essacc}\;
\begin{enumerate}
\item There exist a function $ \kappa : \R^m \rightarrow \R_+$ square integrable with respect to $P$, and $q_0 \in \N$ such that for almost all $s \in \R^m$,
$$ \forall x, y \in \R^p, \: |f(x, s) - f(y, s)| \leq \kappa(s) (1 + \max \left\{\|x\|, \|y\| \right\}^{q_0}) \|x - y \|.$$
    \item For all $k \in \N$, $\alpha_k> 0$, $\sum_{k \in \N} \alpha_k =  \infty$ and $\lim_{k \to \infty} \alpha_k = 0$,
    \item The integrand $f$ and the selection $v$ are semialgebraic,
    \item $\underset{k\in \N}{\sup} \|w_k\| < \infty$ almost surely.
\end{enumerate}
\end{assumption}
Observe that the  first condition implies local Lipschitz continuity of $\mathcal{J}$  hence $\partial^c \mathcal{J}$ is well defined. Furthermore, beyond semialgebraicity, all the results of this work extend to globally subanalytic functions $f$ and  selections $v$, which encompasses the vast majority of machine learning examples. In particular, globally subanalytic functions are polynomially bounded and they satisfy \Cref{ass:convergence_essacc} (1) if $P$ has compact support. 
 
Some solutions have been proposed in the literature to ensure \Cref{ass:convergence_essacc} (4). In \cite{ermol1998stochastic}, the iterates are projected on a compact set. The authors of \cite{borkar, nurminski} use a restart mechanism: the sequence  is reset, either at $w_0$ in \cite{nurminski} or on a sphere in \cite{borkar},  when the objective function is too high. Such mechanisms are however hard to apply in our case since they require computing $\mathcal{J}(w_k)$ at each iteration.

\begin{remark}[Probability spaces] In this work, the term ``almost surely" can refer to different probability spaces.  \Cref{ass:convergence_essacc} (1) refers to a probability space $(S,\mathcal{A},P)$ where $S = \R^m$ and $\mathcal{A} = \mathcal{B}(\R^m)$, whereas in  \Cref{ass:convergence_essacc} (4), it is question of  a coun\-ta\-ble product of probability spaces on which is defined the whole sequence $(w_k)_{k \in \N}$. In the paper, randomness has to be understood regarding these different probability spaces. 
\end{remark}

\paragraph{A glance at the convergence results}
In this general setting, we are not able to guarantee the almost sure convergence of $(w_k)_{k \in \N}$ to the set of critical points of $\mathcal{J}$, but we can describe its limit behavior in a weaker sense introduced in \cite{benaim2000} and extended to set-valued flows in \cite{bianchi2021closed}, see also \cite{faure2011ergodic,Bolte2020LongTD}. This relies on the notion of \textit{essential accumulation points}.

\begin{definition}[Essential accumulation point] {\rm An accumulation point $\bar{w} \in \R^p$ is called {\em essential} if for every neighborhood $U$ of $\bar{w}$ one has
\begin{equation*}
    \limsup_{k \to \infty} \frac{\sum_{i=0}^k \alpha_i \mathbbm{1}_{\left\{w_i \in U\right\}}}{\sum_{i = 0}^k \alpha_i} > 0 \text{ almost surely.}
\end{equation*}}
\end{definition}
Intuitively, non essential accumulation points are hardly never seen. This is made more precise in  \cite[Corollary 4.9]{bianchi2021closed}, through the use of occupation measures.

A first result on the criticality of the essential accumulation points of $(w_k)_{k \in \N}$ is as follows:
\begin{theorem}[Criticality of essential accumulation points]
\label{th:convergence_essacc}
Let $(w_k)_{k \in \N}$ de\-fi\-ned by~(\ref{eq:algo_intro}). Then under \Cref{ass:convergence_essacc}, the following hold:
\begin{enumerate}
    \item \label{th:convergence_essac_1} All essential accumulation points $\bar{w}$ of $(w_k)_{k \in \N}$ satisfy the  weak notion of criticality
\begin{equation}
    \label{eq:clarkeCirticalStochIntSubgrad}
     0 \in \E_{\xi \sim P} \left[\partial^c_w f(\bar{w},\xi) \right] = \int_{\R^m} \partial^c_w f(\bar{w},s) \di P(s)
\end{equation}
where the integral is taken in the sense of Aumann (\Cref{def:aumann}).
    \item \label{th:convergence_essac_2} If $P$ has a density with respect to  Lebesgue measure, there exists a subset $\Gamma \subset \R$ whose complement is finite such that if $\alpha_k \in \Gamma$ for all $k \in \N$, then, for all initialization $w_0$ chosen in a residual  full-measure set, and with probability~$1$, $(w_k)_{k \in \N}$ verifies for all $k \in \N$, 
    \begin{equation*}
						w_{k+1} = w_k - \alpha_k \nabla_w f(w_k,\xi_k) \qquad\mbox{and}\qquad \int_{\R^m} \nabla_w f(w_k, s) dP(s) = \nabla \mathcal{J}(w_k).
    \end{equation*}
		Also, the essential accumulation points $\bar{w}$ of $(w_k)_{k \in \N}$  are Clarke critical, i.e., 
\begin{equation}
\label{eq:clarkeCirticalStochSubgrad}
0 \in \partial^c \mathcal{J}(\bar{w}).
\end{equation}
\end{enumerate}
\end{theorem}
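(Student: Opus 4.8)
The plan is to treat the recursion~(\ref{eq:algo_intro}) as a set-valued stochastic approximation of the differential inclusion $\dot w \in -D(w)$, where $D(w):=\int_{\R^m}\partial^c_w f(w,s)\,\mathrm{d}P(s)$ is the Aumann mean field, and to exploit that $D$ is \emph{conservative} for $\J$. First I would invoke the conservative differentiation theorem for integral functions of \Cref{section:conservativeofintegral} to get that $D$ is a conservative field for $\J$, and the set-valued integration result of \Cref{section:geometryofstocopt} to get that $D$ is semialgebraic; the bound in \Cref{ass:convergence_essacc}(1) then gives that $D$ is locally bounded with nonempty compact convex values and is upper semicontinuous. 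Writing $v(w_k,\xi_k)=d_k+\eta_k$ with $d_k=\E[v(w_k,\xi_k)\mid\mathcal{F}_k]\in D(w_k)$ (a selection integrates into the Aumann integral) and $\eta_k$ a martingale increment, the square integrability of $\kappa$ together with the boundedness \Cref{ass:convergence_essacc}(4) controls the noise, so that the linearly interpolated process is an asymptotic pseudo-trajectory of $\dot w\in -D(w)$ in the sense of Benaim--Hofbauer--Sorin \cite{benaim}.

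The second step is the Lyapunov analysis. Conservativity of $D$ yields the chain rule $\frac{\mathrm{d}}{\mathrm{d}t}\J(w(t))=\langle d(t),\dot w(t)\rangle=-\|d(t)\|^2\le 0$ along any solution $\dot w=-d$, $d\in D(w)$, so $\J$ is a strict Lyapunov function whose rest points are exactly $\{w:0\in D(w)\}$, and the definable Sard theorem for conservative fields \cite{bolte2019conservative} makes $\J(\{w:0\in D(w)\})$ finite. The accumulation-point theory through occupation measures \cite[Corollary 4.9]{bianchi2021closed} then places every essential accumulation point in $\{w:0\in D(w)\}$, which is exactly~(\ref{eq:clarkeCirticalStochIntSubgrad}) and proves part~\ref{th:convergence_essac_1}.

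For part~\ref{th:convergence_essac_2} the idea is that a density for $P$ collapses $D$ to the genuine gradient off a thin definable set, which the dynamics then avoid. A definable Fubini argument applied to the nondifferentiability locus $N=\{(w,s):f(\cdot,s)\text{ is not differentiable at }w\}$ shows that $E:=\{w:\lambda(N^w)>0\}$ is definable of dimension $<p$, and that on $\R^p\setminus E$ the risk $\J$ is differentiable with $\nabla\J(w)=\int_{\R^m}\nabla_w f(w,s)\,\mathrm{d}P(s)$ and $D(w)=\{\nabla\J(w)\}$; in particular the artificial critical set $\{w:0\in D(w)\}\setminus\{w:0\in\partial^c\J(w)\}$ is contained in $\bar E$, still of dimension $<p$. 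I would then apply the avoidance result of \Cref{section:geometryofstocopt}: because $P$ has a density, for step sizes in a set $\Gamma$ whose complement is a $0$-dimensional, hence finite, definable subset of $\R$ and for $w_0$ in a residual full-measure set, the trajectory almost surely never meets $\bar E$. Avoiding $E$ forces $\lambda(N^{w_k})=0$, whence $\xi_k\notin N^{w_k}$ almost surely and $v(w_k,\xi_k)=\nabla_w f(w_k,\xi_k)$ with $\int_{\R^m}\nabla_w f(w_k,s)\,\mathrm{d}P(s)=\nabla\J(w_k)$; and since every essential accumulation point lies outside $\bar E$ yet satisfies $0\in D(\bar w)$ by part~\ref{th:convergence_essac_1}, it satisfies $0\in\partial^c\J(\bar w)$.

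I expect the avoidance step to be the main obstacle. Turning ``$E$ is thin'' into ``the whole countable trajectory misses $\bar E$ with probability one'' requires the o-minimal transversality and parametric Sard machinery of \Cref{section:geometryofstocopt} to simultaneously produce the finite exceptional step-size set and the negligible exceptional initializations, together with an inductive use of the density of $P$ to convert each one-step ``thin preimage'' into a probability-zero event; controlling this uniformly over $k$ is the technical heart of the argument.
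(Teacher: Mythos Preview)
Your outline for part~\ref{th:convergence_essac_1} is essentially the paper's argument. Two minor corrections: the Aumann field $D$ need not be semialgebraic (it is definable in $\Ranexp$ only under a density assumption, via \Cref{proposition:definabilityofsetvaluedintegral}), but this is irrelevant here since \cite[Corollary~4.9]{bianchi2021closed} only needs $D$ to be convex-valued, upper semicontinuous and locally bounded, together with the Lyapunov property coming from conservativity. The definable Sard step you invoke is likewise unnecessary for \emph{essential} accumulation points; it only enters in \Cref{th:algosemialgebraic} for \emph{all} accumulation points.

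Part~\ref{th:convergence_essac_2} has a genuine gap. The avoidance machinery of \Cref{section:geometryofstocopt} only guarantees that each iterate $w_k$ lies in a good set $R$ with probability one; it says nothing about limits of $(w_k)$. Your clause ``since every essential accumulation point lies outside $\bar E$'' is unjustified: even if $w_k\notin\bar E$ for every $k$, an accumulation point may perfectly well land on $\bar E$ (take $\bar E=\{0\}$, $w_k=1/k$). Hence you cannot obtain $0\in\partial^c\J(\bar w)$ by combining $0\in D(\bar w)$ from part~\ref{th:convergence_essac_1} with the identity $D=\{\nabla\J\}$ \emph{at the point} $\bar w$.

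The paper circumvents this by \emph{re-running the stochastic approximation argument with a smaller driving field}. Once the iterates are almost surely in $R$, dominated convergence shows that $\J$ is differentiable at each $w_k$ with $\nabla\J(w_k)=\int\nabla_w f(w_k,s)\,\mathrm{d}P(s)$, and that $v(w_k,\xi_k)=\nabla_w f(w_k,\xi_k)$. One then builds a jointly measurable $\tilde v$ agreeing with $\nabla_w f$ on a dense definable (hence full-measure) set and whose $P$-integral is a selection of $\partial^c\J$ at \emph{every} $w\in\R^p$. The recursion is thus, sample-path by sample-path, identical to $w_{k+1}=w_k-\alpha_k\tilde v(w_k,\xi_k)$, whose mean field lies in $\partial^c\J$. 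Applying \cite[Corollary~4.9]{bianchi2021closed} a second time, now with the field $\partial^c\J$ in place of $D$, yields $0\in\partial^c\J(\bar w)$ directly---with no claim whatsoever about where $\bar w$ sits relative to the bad set. This change of viewpoint (switch the field, not the point) is the missing idea in your argument.
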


\begin{remark}{\rm
1. {(Generalized criticality)} The criticality notion (\ref{eq:clarkeCirticalStochIntSubgrad}) is inherent to the very nature of subgradient sampling methods and relates to the notion of an artificial critical point as described in \cite{bolte2020mathematical}. Gradient artifacts can be generated by sampling from discrete or continuous distributions as in the two following examples: let $f\colon (w,s) \mapsto  s|w|$ and $P = \frac{1}{2} \delta_{-1} + \frac{1}{2} \delta_1$ or $P$ is the uniform density on $[-1,1]$, then $\partial^c \mathcal{J}(w) = 0$ for all $w \in \R$, but $\E_{\xi \sim P}[\partial^c_w f(0,\xi)] = [-1,1]$. Due to the chain rule characterization of conservative gradients, \Cref{def:conservativeField}. These senseless outputs are however extremely rare, this is why they do not impact first-order optimization much. This is shown in {\em\ref{th:convergence_essac_2}} through \eqref{eq:clarkeCirticalStochSubgrad}
}

2. {(Extension to conservative gradient oracles)} \label{remark:conservativeinsteadofclarke} Under the same assumptions, \Cref{th:convergence_essacc} (\ref{th:convergence_essac_1}) holds, mutatis mutandis, replacing the Clarke subgradient by a conservative gradient. Indeed, $\partial^c_w f$ can be replaced by a semialgebraic set-valued map $D$ where for $P$-almost all $s \in \R^m$, $D(\cdot, s)$ is a conservative gradient for $f(\cdot, s)$. In this case, \Cref{th:convergence_essacc} (\ref{th:convergence_essac_2}) also holds true {\em as is}. 
\end{remark}

With an additional assumption on $(\alpha_k)_{k \in \N}$ and the distribution $P$, all accumulation points are Clarke critical and the risk function converges.
\begin{assumption} \label{ass:convergence_acc}{\rm $P$ has a semialgebraic density with respect to Lebesgue.}
\end{assumption}

\begin{assumption} \label{ass:summabilitystepsizes}
$\sum_{k \in \N} \alpha_k^2 < \infty$.
\end{assumption}

\begin{theorem}[Criticality of accumulation points and convergence] \label{th:algosemialgebraic} Under {\em\Cref{ass:convergence_essacc}-\ref{ass:convergence_acc}-\ref{ass:summabilitystepsizes}},  \Cref{th:convergence_essacc} holds  for all accumulation points. In addition,  $\mathcal{J}(w_k)$ converges almost surely.
\end{theorem}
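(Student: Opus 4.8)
The plan is to recast the recursion \eqref{eq:algo_intro} as a set-valued Robbins--Monro scheme and run the ODE method in the strong form that \Cref{ass:convergence_acc} and \Cref{ass:summabilitystepsizes} now make available. Writing $H(w):=\int_{\R^m} v(w,s)\di P(s)$ for the mean field, the recursion reads $w_{k+1}=w_k-\alpha_k H(w_k)+\alpha_k e_{k+1}$ with $e_{k+1}:=H(w_k)-v(w_k,\xi_k)$; since $\xi_k$ is independent of $\mathcal{F}_k:=\sigma(w_0,\xi_0,\dots,\xi_{k-1})$ and $\E_{\xi\sim P}[v(w_k,\xi)]=H(w_k)$, the $(e_{k+1})$ form a martingale difference sequence adapted to $(\mathcal{F}_k)$. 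First I would record, from the conservative interchange theorem of \Cref{section:conservativeofintegral}, that the Aumann integral $D(w):=\int_{\R^m}\partial^c_w f(w,s)\di P(s)$ is a conservative gradient for $\mathcal{J}$; in particular $H$ is a measurable selection of $D$ and $\mathcal{J}$ is path-differentiable. Under \Cref{ass:convergence_acc} the density of $P$ is semialgebraic, so the set-valued definable integration result of \Cref{section:geometryofstocopt} makes $D$, and hence the critical set $\Lambda:=\{w:0\in D(w)\}$, semialgebraic.

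Next I would verify the noise condition that promotes the analysis from essential accumulation points to all of them. On the almost sure event $\{\sup_k\|w_k\|=R<\infty\}$ of \Cref{ass:convergence_essacc}(4), the growth bound of \Cref{ass:convergence_essacc}(1) yields $\|v(w_k,\xi_k)\|\le \kappa(\xi_k)(1+R^{q_0})$, whence $\E[\|e_{k+1}\|^2\mid\mathcal{F}_k]\le (1+R^{q_0})^2\,\E_{\xi\sim P}[\kappa(\xi)^2]<\infty$ because $\kappa$ is square integrable. Combined with \Cref{ass:summabilitystepsizes}, this gives $\sum_k\alpha_k^2\,\E[\|e_{k+1}\|^2\mid\mathcal{F}_k]<\infty$ almost surely, so the martingale $\sum_{j\le k}\alpha_j e_{j+1}$ converges almost surely (a stopping-time localization $\tau_N=\inf\{k:\|w_k\|>N\}$ handles the randomness of $R$ before taking $N\to\infty$). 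This is precisely the hypothesis ensuring that the affine interpolated process of $(w_k)$ is a perturbed solution, hence an asymptotic pseudotrajectory, of the differential inclusion $\dot w\in -D(w)$. By the Benaim--Hofbauer--Sorin theory \cite{benaim}, in its set-valued form \cite{bianchi2021closed}, the limit set $L$ of $(w_k)$ --- which coincides with its set of accumulation points since $\alpha_k\to 0$ --- is internally chain transitive for this flow.

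It then remains to exploit the Lyapunov structure. Because $D$ is conservative for $\mathcal{J}$, along any solution of $\dot w\in -D(w)$ one has $\tfrac{d}{dt}\mathcal{J}(w(t))=\langle \dot w(t),\dot w(t)\rangle\cdot(-1)=-\|\dot w(t)\|^2\le 0$, with strict decrease off $\Lambda$, so $\mathcal{J}$ is a Lyapunov function for $(\dot w\in -D(w),\Lambda)$. Since $D$ is semialgebraic and conservative, the definable Sard theorem for conservative fields forces the set of critical values $\mathcal{J}(\Lambda)$ to be finite, in particular of empty interior. Invoking the Lyapunov principle for internally chain transitive sets \cite{benaim} I would conclude $L\subset\Lambda$, that is, \emph{every} accumulation point $\bar w$ of $(w_k)$ satisfies \eqref{eq:clarkeCirticalStochIntSubgrad} (and is Clarke critical through the mechanism of \Cref{th:convergence_essacc}(\ref{th:convergence_essac_2})), and that $\mathcal{J}$ is constant on $L$. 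Finally, boundedness of $(w_k)$ and continuity of $\mathcal{J}$ force every accumulation value of the bounded real sequence $(\mathcal{J}(w_k))$ to equal this constant, so $\mathcal{J}(w_k)$ converges almost surely.

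The main obstacle is not the stochastic approximation bookkeeping but the two geometric inputs on which it rests: that the parameterized Aumann integral $D$ is genuinely conservative for $\mathcal{J}$, and that it remains semialgebraic so the nonsmooth Sard property applies to $\Lambda$. These are exactly the interchange theorem and the set-valued definable integration result of \Cref{section:conservativeofintegral} and \Cref{section:geometryofstocopt}. Once they are in hand, the square-summability of the steps does the remaining work of upgrading the essential accumulation statement to one about all accumulation points, together with convergence of the risk.
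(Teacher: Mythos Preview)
Your proposal is correct and follows essentially the same route as the paper: the paper packages the martingale-noise estimate, the Lyapunov property of $\mathcal{J}$, and the Bena\"im--Hofbauer--Sorin invocation into the abstract \Cref{theorem:subsequentialconvergence} (fed by \Cref{lem:summabilityNoise}(2) and \Cref{lemma:lyapunovcondition}), and then reduces the proof of \Cref{th:algosemialgebraic} to checking \Cref{ass:morsesard} via \Cref{proposition:definabilityofsetvaluedintegral} and the definable Sard theorem; you have unpacked exactly these ingredients in the same order. One terminological correction: under \Cref{ass:convergence_acc} the integration result of \Cref{proposition:definabilityofsetvaluedintegral} (i.e., \cite{Cluckers_2011}) only yields that $D$ and $\mathcal{J}$ are definable in $\Ranexp$, not semialgebraic; this does not affect your argument since the conservative Sard theorem applies in any o-minimal structure, but you should state the correct structure and not forget to note that $\mathcal{J}$ itself is definable there as well.
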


\begin{remark} 1. (On proofs) In order to prove \cref{th:convergence_essac_2},  we use a property of semialgebraic functions called {\em stratification}, see \Cref{def:stratification}. The intermediary results leading to \cref{th:convergence_essac_2} are gathered in \Cref{section:geometryofstocopt}. The proof of \Cref{th:algosemialgebraic} is based on results from \cite{benaim, bianchi2021closed} and an abstract stochastic approximation version is found in \Cref{section:differentialinclusion}.

2. (On \Cref{ass:convergence_acc})  \label{remark:discretecontinuousdistribution}  The main reason for this assumption is that it provides enough rigidity to ensure a strong form of Sard's theorem for the risk function $\J$. Semialgebraic  densities are extremely flexible: they approximate all continuous densities on compact sets. Although \cref{ass:convergence_acc} relates to the unknown distribution $P$, this is a reasonable proxy for a large class of distributions. Beyond semialgebraicity, $P$ can be assumed to be globally subanalytic, see \Cref{subsection:definablesets} for the definition of such functions, which includes analytic densities with semialgebraic compact support, like  truncated  Gaussian distributions.

Since  definability and consequently Sard's property are preserved by finite sum,  \Cref{th:convergence_essacc} and \cref{th:algosemialgebraic} also hold for $P$ being the joint distribution between a discrete and a continuous random variable. This allows to consider classification tasks in online deep learning (see \cref{corr:convergenceclarke}). We limit ourselves to  an absolutely continuous distribution for the sake of simplicity.

\end{remark}

\paragraph{On calculus: chain rule for expectations}
A pivotal result in our analysis  is the following:
\begin{lemma}[Chain rule for expectations] \label{corr:pathdiffexpectation} Under \Cref{ass:convergence_essacc}, $\mathcal{J}$ admits a chain rule with respect to $\partial^c \mathcal{J}$, i.e., for any absolutely continuous curve $\gamma : \left[0, 1\right] \rightarrow \R^p$,
\begin{equation*}
    \frac{\di }{\di t }(\mathcal{J}\circ \gamma)(t) = \langle a, \dot{\gamma}(t) \rangle \text{ for all } a \in \partial^c \mathcal{J}(\gamma(t)) \text{ for almost all $t \in [0,1]$.}
\end{equation*}
$\mathcal{J}$ also admits a chain rule with respect to $\E_{\xi \sim P}[\partial^c_w f(\cdot, \xi)]$.
\end{lemma}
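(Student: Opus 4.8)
The plan is to establish the second, stronger assertion — that $\E_{\xi\sim P}[\partial^c_w f(\cdot,\xi)]$ is a conservative gradient for $\mathcal{J}$ — and to deduce the chain rule with respect to $\partial^c\mathcal{J}$ from it. Indeed, \Cref{ass:convergence_essacc}(1) makes $\mathcal{J}$ locally Lipschitz, so for any absolutely continuous $\gamma$ the composite $\mathcal{J}\circ\gamma$ is absolutely continuous, hence differentiable almost everywhere; and by the Clarke inclusion \eqref{eq:expectationclarkesubgradient} one has $\partial^c\mathcal{J}(w)\subseteq \E_{\xi\sim P}[\partial^c_w f(w,\xi)]$ for every $w$. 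A chain rule that holds for \emph{every} element of the larger field therefore holds verbatim for every element of the smaller field $\partial^c\mathcal{J}$. Hence it suffices to prove the expectation chain rule.

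For the latter I would apply the interchange theorem between Aumann integration and conservative differentiation of \Cref{section:conservativeofintegral} to the integrand $D(\cdot,s)=\partial^c_w f(\cdot,s)$. Two hypotheses must be checked. First, for $P$-almost all $s$ the map $f(\cdot,s)$ is semialgebraic (\Cref{ass:convergence_essacc}(3)) and locally Lipschitz (\Cref{ass:convergence_essacc}(1)), so by the path-differentiability of definable locally Lipschitz functions \cite{bolte2019conservative,davis2021conservative} its Clarke subgradient $\partial^c_w f(\cdot,s)$ is conservative for $f(\cdot,s)$. Second, the growth bound of \Cref{ass:convergence_essacc}(1) yields $\|\partial^c_w f(x,s)\|\le \kappa(s)(1+\|x\|^{q_0})$ with $\kappa\in L^2(P)\subseteq L^1(P)$, which makes the Aumann integral well defined and supplies an $L^1$-dominating function on compact sets.

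Granting these, the argument runs as follows. Fix an absolutely continuous $\gamma:[0,1]\to\R^p$ and set $R:=\max_{t\in[0,1]}\|\gamma(t)\|$. Writing $\mathcal{J}(\gamma(t_2))-\mathcal{J}(\gamma(t_1))=\int_{\R^m}\int_{t_1}^{t_2}\frac{\di}{\di t}f(\gamma(t),s)\,\di t\,\di P(s)$ and exchanging the two integrals — legitimate since $|\frac{\di}{\di t}f(\gamma(t),s)|\le \kappa(s)(1+R^{q_0})\|\dot\gamma(t)\|$ is integrable on $[t_1,t_2]\times\R^m$ — gives $\frac{\di}{\di t}(\mathcal{J}\circ\gamma)(t)=\int_{\R^m}\frac{\di}{\di t}f(\gamma(t),s)\,\di P(s)$ for almost all $t$. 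For each $s$, conservativity of $\partial^c_w f(\cdot,s)$ gives $\frac{\di}{\di t}f(\gamma(t),s)=\langle d,\dot\gamma(t)\rangle$ for every $d\in\partial^c_w f(\gamma(t),s)$ and almost all $t$; a Fubini argument on the measurable exceptional set upgrades this to a single full-measure set of $t$ valid for $P$-almost all $s$. Then for any $a\in\E_{\xi\sim P}[\partial^c_w f(\gamma(t),\xi)]$, written $a=\int_{\R^m}d(s)\,\di P(s)$ for a measurable selection $d(s)\in\partial^c_w f(\gamma(t),s)$, and using that $\langle d(s),\dot\gamma(t)\rangle=\frac{\di}{\di t}f(\gamma(t),s)$ independently of the selection, one obtains $\langle a,\dot\gamma(t)\rangle=\int_{\R^m}\frac{\di}{\di t}f(\gamma(t),s)\,\di P(s)=\frac{\di}{\di t}(\mathcal{J}\circ\gamma)(t)$, which is exactly the claimed chain rule.

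The main obstacle is the measure-theoretic bookkeeping rather than any single estimate. Two points require care: the joint measurability of $(t,s)\mapsto\partial^c_w f(\gamma(t),s)$ together with the existence of measurable selections, so that the Aumann integral and Fubini are meaningful — this is where semialgebraicity, through definable selection, keeps things clean; and the Fubini exchange producing one common null set of exceptional times $t$, uniform in $s$, rather than an $s$-dependent one. Differentiation under the integral sign and the interchange of $\int\,\di P$ with the linear form $\langle\cdot,\dot\gamma(t)\rangle$ are then routine consequences of the domination above. Everything ultimately rests on the interchange theorem of \Cref{section:conservativeofintegral}, whose proof carries this technical weight.
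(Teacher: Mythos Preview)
Your proposal is correct and follows essentially the same route as the paper: verify the hypotheses of \Cref{theorem:leibnizconservative} (path-differentiability of $f(\cdot,s)$ from semialgebraicity, joint measurability, and the integrable growth bound from \Cref{ass:convergence_essacc}(1)), then conclude that $\E_{\xi\sim P}[\partial^c_w f(\cdot,\xi)]$ is conservative for $\mathcal{J}$. The only cosmetic difference is that the paper deduces the chain rule for $\partial^c\mathcal{J}$ from the general fact that any function admitting a conservative gradient is path-differentiable (so $\partial^c\mathcal{J}$ is itself conservative), whereas you go through the inclusion $\partial^c\mathcal{J}\subseteq \E_{\xi\sim P}[\partial^c_w f(\cdot,\xi)]$; both are valid, and your additional sketch of the Fubini/selection machinery simply reproduces the proof of \Cref{theorem:leibnizconservative}.
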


Let us recall that, functions admitting a chain rule with respect to their Clarke subgradient, or equivalently to a  conservative gradient, are called {\em path-differentiable}. 
 \Cref{corr:pathdiffexpectation} is a consequence of the more general  \Cref{theorem:leibnizconservative} which generalizes the outer sum rule  \cite[Corollary 4]{bolte2019conservative} and extends  \cite[Theorem 1]{norkin1986} to conservative gradients. This result allows to interchange conservative gradient and integral operations: taking the expectation of $v(\cdot,\xi)$ with respect to $\xi \sim P$ gives $\E_{\xi \sim P} \left[v(\cdot, \xi) \right]$, which is a selection in a conservative gradient for $\mathcal{J}$.

\subsection{Comparison with related works}

Recent works \cite{Davis2020,Majewski2018AnalysisON} assume the Clarke subgradient and integral operations to be interchangeable, which in practice requires regularity assumptions. Without such assumptions, as here, first-order sampling  may produce absurd trajectories or lead to spurious critical points. This was first observed  in \cite{ermol1998stochastic} and rediscovered in \cite{bolte2019conservative,bolte2020mathematical}. To avoid converging to these undesirable points \cite[Remark 4.2]{ermol1998stochastic} suggests perturbing  iterates by random noise. 
Instead, our  \Cref{th:algosemialgebraic} ensures that ``almost all" subgradient sampling sequences accumulate to Clarke critical points, justifying the correctness of the algorithm as implemented in practice. Avoidance of spurious critical points is also obtained in \cite{bianchi2020} through probabilistic 
and continuity arguments. The definable framework allows for a sharper description of the set of step sizes and initializations leading to spurious points. For instance, with a finite horizon $K$, the set of bad initializations is a finite union of manifolds of dimension strictly lower than $p$, while the set of bad step sizes is  finite.

Due to the role of the ODE method in the analysis of stochastic optimization methods, considering risk functions having a chain rule is not new in the literature, either as an assumption \cite{bianchi2020} or using restrictive sufficient conditions \cite{Davis2020,Majewski2018AnalysisON}. With  \Cref{corr:pathdiffexpectation}, we provide instead   simple and explicit sufficient conditions (see also \Cref{theorem:leibnizconservative} for a general form). Similarly, our  \Cref{ass:convergence_acc} is sufficient to obtain a strong form of Sard's condition which is usually stated in a weak abstract form as a hypothesis \cite{ermol1998stochastic,benaim,bianchi2020}.

Several works consider a constrained version of  (\ref{eq:expectation_minimization}). For instance, \cite{Schechtman2022} studies a proximal stochastic subgradient method and  \cite{ermol1998stochastic} considers a projection on a compact set. We decide to focus on the unconstrained setting, but a precise study of the constrained setting could be a matter of future work.

\subsection{First-order sampling, backpropagation, and deep learning}
\label{subsection:deeplearning}

\paragraph{Deep learning model} Let $P$ be a probability distribution on  $ \R^d \times \R^I$  called \textit{population} distribution. One of the goals of machine learning is to build a predicting function $h \colon \R^d \mapsto \R^I$ such that $h(X)\simeq Y$ when $(X,Y)$ is distributed according to $P$, written $(X,Y) \sim P$. The input $X$ can be, for instance, an image, and $Y$ can either be discrete for a classification task or continuous for a regression task.

In deep learning, the predictor $h$ is a neural network defined by a compositional structure involving $L$ layers and a parameters vector  $w = (w^{(1)}, \ldots, w^{(L)})$ seen as a vector of $\R^p$. For $l = 0,\ldots, L$, the $l$-th layer is represented by a real vector $x^{(l)} \in \R^{p_l}$. We consider that layer $0$ has the same dimension as $\R^d$, $p_0 = d$. Given an input $x \in \R^d$ the predicting function encoded by the neural network with parameter $w \in \R^p$ is denoted by $h(w, \cdot)$ and is defined by the relations:
\begin{align}
    \left\{
        \begin{array}{ll}
            x^{(0)} &= x,  \\
            x^{(l)} &= g_l(w^{(l)}, x^{(l-1)}), \hspace{1cm} \text{for } l = 1, \ldots, L,\\
            h(w, x) &= x^{(L)}.
        \end{array}
    \right.
    \label{eq:neuralNetwork}
\end{align}

For $l = 1, \ldots, L$, the function $g_l$ can take several forms, see e.g. \cite{lecun2015deeplearning}.
With a slight abuse of notation, we will consider that $w$ is a vector in $\R^p$, consisting of the concatenation of all vectors $w_1, \ldots, w_L$.

\paragraph{Online deep learning}  Given a loss function $\ell \colon \R^I \times \R^I \to \R $, training is formulated as a risk minimization problem:
\begin{align}
     \min_{w \in \R^p} \ \mathcal{J}(w) := \E_{(X, Y) \sim P} \left[ \ell(h(w,X), Y) \right]
    \label{eq:neuralNetTraining}
\end{align}

The expectation is generally unknown and approximated through statistical sampling. Consider  a sequence  $\left( x_k, y_k\right)_{k \in \N}$ of i.i.d. samples generated from the distribution $P$ and, let us tackle the problem \eqref{eq:neuralNetTraining} with the first-order sampling algorithm (\ref{eq:algo_intro}). In this setting, $(\xi_k)_{k \in \N}$ is $(x_k, y_k)_{k \in \N}$ while $f$ is the function $(w, x,y) \mapsto \ell(h(w,x), y)$. 

\paragraph{Backpropagation and conservative gradients} In deep learning first-order information is accessed using backpropagation \cite{Rumelhart1986LearningRB}. It is an efficient application of the chain rule of differentiable calculus which provides a numerical evaluation of the derivative of $f$. We will consider differentiation with respect to the decision variable $w$ in \eqref{eq:neuralNetTraining} and denote the output of backpropagation by $\backprop_w$. The function $f$ writes as a composition $f = f_r \circ \ldots \circ f_1$ where the functions $f_1, \ldots, f_r$ involve the functions $\ell$ and $g_1, \ldots, g_L$ from (\ref{eq:neuralNetwork}) and (\ref{eq:neuralNetTraining}). When applied to nondifferentiable functions $f_1, \ldots, f_r$, $\backprop_w$ could be considered as an oracle evaluating an element in the product of their Clarke Jacobians:
\begin{align}
    \backprop_w f(w, x,y) \nonumber
    & \in \partial^c f_r(f_{r-1} \circ \ldots \circ f_1(w, x,y) )^T \\ &\times \Jac^c f_{r-1}(f_{r-2} \circ \ldots \circ f_1(w, x,y)) \times \ldots \times \Jac^c_w f_1(w, x, y).
    \label{eq:defBackprop}
\end{align}

With this definition in mind, we may define the backpropagation variant of (\ref{eq:algo_intro}).
\begin{algorithm}[ht]
	\caption{First-order sampling with backpropagation} 
	\label{alg:onlineSGDdeepLearning}
	\begin{algorithmic}[1]
	\STATE \textbf{Inputs}: \\
	$w_0 \in \R^p$, $\left( x_k, y_k\right)_{k \in \N}$ i.i.d. with distribution $P$, $(\alpha_k)_{k\in \N}$ positive step sizes.
		\FOR {$k=0,1,\ldots$}
            \STATE $w_{k+1} = w_k - \alpha_k \backprop_{w_k}\ell(h(w_k,x_k), y_k)$ 
		\ENDFOR
	\end{algorithmic} 
\end{algorithm}

Backpropagation does not necessarily compute an element of the Clarke subgradient. When $f_1, \ldots, f_r$ are path-differentiable, backpropagation  computes a selection of a conservative gradient of $f$. It is satisfied for instance if  $\ell, g_1, \ldots ,g_L$ are locally Lipschitz and semialgebraic, or more generally, globally subanalytic. We hence assume the following which is a  condition satisfied by the vast majority of applications:
\begin{assumption}[Locally Lipschitz continuity and semialgebraicity] \label{ass:lipschitzandsemialgebraic}{\rm
	The neural network training problem in \eqref{eq:neuralNetTraining} satisfies for $l = 1 ,\ldots, L$, the functions $g_l \colon \R^{p_{l-1}} \to \R^{p_l}$ and $\ell \colon \R^I \times \R^I \to \R$ are locally Lipschitz and semialgebraic functions.
	\label{ass:neuralNetStructure}}
\end{assumption}

In order to formulate our results, we further require an assumption on the  distribution $P$. This assumption is quite mild as it encompasses a large class of probability distributions in classification or regression.
\begin{assumption}[Semialgebraic distribution with compact support]{\rm
	The joint distribution $P$ satisfies one of the following:}

	        $-$ {\rm Regression: $P$ has a semialgebraic density $\phi$ with compact support with respect to Lebesgue on $\R^d \times \R^I$.}

					$-$ {\rm Classification: $P$ is supported on $\R^d \times (e_i)_{i=1}^I$ where for $i = 1 \ldots I$, $e_i$ is the $i$-th element of the canonical basis in $\R^I$. The distribution $P$ factorizes  as $P(X,Y) = P(Y) P(X|Y)$ and we assume $P_Y$ is discrete over $(e_i)_{i=1}^I$ 
									and $P(X|Y = e_i)$ has a density  $\phi_i$ with respect to Lebesgue, that is semialgebraic and compactly supported.}
	\label{ass:datadistribution_dl}
\end{assumption}

In this setting, a consequence of \Cref{th:convergence_essacc} and \Cref{theorem:subsequentialconvergence} is the following:
\begin{theorem}[First-order sampling and training for online deep learning]
\label{corr:convergenceclarke} Under Assumptions \ref{ass:neuralNetStructure} and \ref{ass:datadistribution_dl}, let $(w_k)_{k \in \N}$ be generated by \Cref{alg:onlineSGDdeepLearning}. We suppose that the sequence $(\alpha_k)_{k \in \N }$ is strictly positive and verifies $\sum_{k \in \N} \alpha_k =  \infty$, $ \alpha_k = o(1/\log(k))$. Assume furthermore that  $\sup \{ \|w_k\|:  k \in \N\}<\infty$ almost surely. 

Then there exist a set $\Gamma \subset \R$ whose complement is finite, and $W \subset \R^p$ of full measure and residual such that if for all $k \in \N, \alpha_k \in \Gamma$ and $w_0 \in W$, $\mathcal{J}(w_k)$ converges almost surely as $k \to \infty$ and all accumulation point $\bar{w}$ of $(w_k)_{k \in \N}$ is Clarke critical, i.e., verifies $0 \in \partial^c \mathcal{J}(\bar{w})$.
\end{theorem}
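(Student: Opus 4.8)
The plan is to verify, one by one, the hypotheses of the abstract results \Cref{th:convergence_essacc} and \Cref{theorem:subsequentialconvergence} for the deep-learning instance, using the conservative-gradient extension announced in \Cref{remark:conservativeinsteadofclarke}. First I would recast \Cref{alg:onlineSGDdeepLearning} in the framework of \Cref{subsection:subgradientsamplingmethod}: set $s=(x,y)$, $f(w,s)=\ell(h(w,x),y)$, and let the oracle be $v(w,s)=\backprop_w \ell(h(w,x),y)$. Under \Cref{ass:neuralNetStructure} each $g_l$ and $\ell$ is locally Lipschitz and semialgebraic; since both properties are stable under composition and $f=f_r\circ\cdots\circ f_1$, the map $f(\cdot,s)$ is locally Lipschitz, semialgebraic and path-differentiable. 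By \eqref{eq:defBackprop} and the backpropagation calculus of \cite{bolte2019conservative}, $v(\cdot,s)$ is then a semialgebraic selection of a conservative gradient $D(\cdot,s)$ of $f(\cdot,s)$ (generally strictly larger than $\partial^c_w f(\cdot,s)$). This is precisely the data required to invoke \Cref{th:convergence_essacc} with $\partial^c_w f$ replaced by $D$, as permitted by \Cref{remark:conservativeinsteadofclarke}.

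Next I would check the remaining items of \Cref{ass:convergence_essacc} and \Cref{ass:convergence_acc}. The step-size conditions in \Cref{ass:convergence_essacc}(2) hold because $\alpha_k>0$, $\sum_k \alpha_k=\infty$, and $\alpha_k=o(1/\log k)$ forces $\alpha_k\to 0$; the boundedness in \Cref{ass:convergence_essacc}(4) is assumed. For the growth condition \Cref{ass:convergence_essacc}(1) I would use that a semialgebraic (hence globally subanalytic) locally Lipschitz function is polynomially bounded with locally uniform modulus; since both regimes of \Cref{ass:datadistribution_dl} give $P$ a \emph{compact} support, $s$ ranges over a compact set, so the modulus $\kappa$ may be taken bounded and therefore square integrable (integrability of $f(w,\cdot)$ and well-posedness of $\J$ follow from continuity on this compact support). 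For \Cref{ass:convergence_acc}, the regression case is immediate as $P$ has a semialgebraic density. In the classification case I would decompose $\J(w)=\sum_{i=1}^{I} P(Y=e_i)\int_{\R^d}\ell(h(w,x),e_i)\,\phi_i(x)\,\di x$; each summand is a parametrized integral of a semialgebraic integrand against a semialgebraic compactly supported density, hence definable by the integration result of \Cref{section:geometryofstocopt}, with conservative gradient equal to the expectation of $D$ through the chain rule for expectations (\Cref{corr:pathdiffexpectation}, \Cref{theorem:leibnizconservative}). Since definability and the strong Sard property are preserved by finite sums (\Cref{remark:discretecontinuousdistribution}), $\J$ is definable, path-differentiable and satisfies the strong Sard condition, and $\E_{\xi\sim P}[D(\cdot,\xi)]$ is a conservative gradient for $\J$.

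With these verifications in hand, \Cref{th:convergence_essacc} produces a cofinite set $\Gamma\subset\R$ and a residual full-measure set $W\subset\R^p$ such that, whenever $\alpha_k\in\Gamma$ and $w_0\in W$, the increments coincide almost surely with genuine gradient steps and the interchange $\E_{\xi\sim P}[\nabla_w f(w_k,\xi)]=\nabla\J(w_k)$ holds, which rules out the artificial critical points and makes essential accumulation points Clarke critical. To pass from essential to \emph{all} accumulation points and to obtain convergence of $\J(w_k)$, I would invoke \Cref{theorem:subsequentialconvergence}, whose hypotheses are exactly what the previous paragraph established: path-differentiability of $\J$ (so $\J$ acts as a Lyapunov function along solutions of $\dot w\in-\E_{\xi\sim P}[D(w,\xi)]$), the strong Sard condition, almost sure boundedness of the iterates, and the step size $\alpha_k=o(1/\log k)$. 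This yields that the limit set is contained in the conservative critical set, on which $\J$ is constant, so $\J(w_k)$ converges almost surely; combined with the genericity afforded by $\Gamma$ and $W$, every accumulation point $\bar w$ satisfies $0\in\partial^c\J(\bar w)$.

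The hard part will be the classification case, which is not literally covered by \Cref{ass:convergence_acc} (it asks for an absolutely continuous semialgebraic density, whereas the classification law is mixed discrete--continuous). The crux is to show that $\J$ stays definable with conservative gradient equal to the termwise expectation and retains the strong Sard property; this relies on commuting conservative differentiation with integration via \Cref{theorem:leibnizconservative} and on the set-valued parametrized-integral definability result of \Cref{section:geometryofstocopt}, then on finite-sum stability of definability and Sard. A secondary but necessary point is to confirm that $v$ is a jointly measurable (indeed semialgebraic) selection of $D$ in $(w,s)$ and that the growth modulus of \Cref{ass:convergence_essacc}(1) is genuinely square integrable --- both secured by the compact support of $P$ together with the polynomial boundedness of subanalytic functions.
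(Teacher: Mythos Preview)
Your proposal is correct and follows essentially the same route as the paper's proof: cast $\backprop_w f$ as a semialgebraic selection of the product-of-Clarke-Jacobians conservative gradient $D$, use polynomial boundedness of semialgebraic maps together with the compact support of $P$ to verify \Cref{ass:convergence_essacc} and \Cref{ass:convergence_acc} (handling classification by the finite-sum decomposition exactly as you outline), then invoke \Cref{th:convergence_essacc} in its conservative-gradient form (\Cref{remark:conservativeinsteadofclarke}) and \Cref{theorem:subsequentialconvergence} under hypothesis~3 of \Cref{lem:summabilityNoise}. The only verification the paper makes explicit that you leave implicit is that $w\mapsto\sup_{s\in\operatorname{supp}P}\|\backprop_w f(w,s)\|$ is locally bounded---this is the extra requirement in \Cref{lem:summabilityNoise}(3) beyond $\alpha_k=o(1/\log k)$, and it follows at once from the polynomial bound $\|D(w,s)\|\le K(1+\|s\|^{p_0})(1+\|w\|^{q_0})$ restricted to $s$ in the compact support.
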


This theorem parallels \cite[Theorem 9]{bolte2019conservative} for a general population distribution: assumptions are simple and widespread, it is fully compatible with backpropagation, and shows that spurious outputs hardly matter for the training phase. 
Note however that the validity of absolutely continuous randomness is questionable in practice. In particular, the density of $W$, and the finiteness of $\Gamma^c$ can be affected due to quantization errors. For instance, \cite{NEURIPS2021_043ab21f}  empirically shows that gradient artifacts created by conservative calculus can impact the method.

Similar models have been considered in the literature, but with somehow stronger assumptions, e.g., \cite{Ruszczynski2021ASS} (which rules out nonsmooth activation functions). Another important contribution is given in \cite{norkin2021}, as discussed in the introduction.

\section{Nonsmooth analysis for stochastic approximation algorithms}
\label{section:conservativeofintegral}

\subsection{Set-valued analysis and conservative gradients}
\label{subsection:conservativefields_setvalued}

Let $(X, \mathcal{F})$ be a measurable space. Let us recall some results from set-valued analysis.

\begin{definition}[Measurable set-valued maps]{\rm Denote $\mathcal{K}_{p}$ the set of nonempty compact subsets of~$\R^p$. It is a measurable space considering the Borel $\sigma$-algebra $\mathcal{B}_H(\mathcal{K}_p)$ induced by the topology of the Hausdorff  distance. A nonempty compact valued map $F : X \rightrightarrows \R^p$  is called {\em measurable}, if it is measurable from $(X, \mathcal{F})$ to $(\mathcal{K}_{p}, \mathcal{B}_H(\mathcal{K}_p) )$. In this case, for all closed subsets $A \subset \R^p$, the upper inverse $F^u(A) := \left\{x \in X \ | \ F(x) \subset A \right\}$ is measurable in $(X, \mathcal{F})$.}
\end{definition}

\begin{proposition}[Measurable selections, Theorem 18.13 \cite{infinite}] \label{prop:measurableselection} Let $F : X \rightrightarrows \R^p$ be a  measurable  nonempty and compact valued map. Then there exists a measurable selection of $F$, that is a measurable function $v : X \rightarrow \R^p$ satisfying for all $x \in X$, $v(x) \in F(x)$.
\end{proposition}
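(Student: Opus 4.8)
The statement is the classical Kuratowski--Ryll-Nardzewski measurable selection theorem, and the plan is to reproduce its proof, exploiting that the target $\R^p$ is a complete separable metric space while the domain $(X,\mathcal{F})$ is an arbitrary measurable space.

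First I would translate the Hausdorff-measurability hypothesis into the ``weak'' (lower) measurability that the construction actually consumes. The property singled out in the statement is that $F^u(A) = \{x : F(x) \subset A\}$ is measurable for every closed $A$; taking complements shows that for every open $U$ the lower inverse $F^\ell(U) := \{x : F(x) \cap U \neq \emptyset\} = X \setminus F^u(\R^p \setminus U)$ is measurable, since $\R^p \setminus U$ is closed. In particular the set of $x$ for which $F(x)$ meets a given open ball $B(a,r)$ is measurable, and this is the only form of measurability I will use.

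Next I would fix a countable dense subset $\{a_n\}_{n \in \N}$ of $\R^p$ (separability) and build, by induction on $k$, measurable maps $f_k : X \to \R^p$ taking at most countably many values and satisfying $\rho(f_k(x), F(x)) < 2^{-k}$ together with $\|f_{k+1}(x) - f_k(x)\| < 2^{-k+1}$, where $\rho(y, F(x))$ is the distance from $y$ to the nonempty closed set $F(x)$. For the base step I set $f_0 = a_n$ on $F^\ell(B(a_n,1)) \setminus \bigcup_{m < n} F^\ell(B(a_m,1))$, a measurable partition covering $X$ because every $F(x)$ meets some unit ball around a dense point. At stage $k+1$, on each piece where $f_k \equiv a$ (countably many, by the induction hypothesis), I assign $f_{k+1} := a_n$ for the first index $n$ such that $\rho(a_n, F(x)) < 2^{-k-1}$, i.e. $x \in F^\ell(B(a_n, 2^{-k-1}))$, \emph{and} $\|a_n - a\| < 2^{-k+1}$. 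Existence of such an $n$ pointwise follows from the induction bound: some $z \in F(x)$ has $\|z - a\| < 2^{-k}$, and a dense point $a_n$ with $\|a_n - z\| < 2^{-k-1}$ gives both $\rho(a_n, F(x)) < 2^{-k-1}$ and $\|a_n - a\| < 2^{-k-1} + 2^{-k} < 2^{-k+1}$. Each defining condition is the intersection of an $F^\ell$ of an open ball with a condition depending only on $n$ and $a$, hence measurable, so $f_{k+1}$ is a measurable countably-valued map.

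Finally I would pass to the limit. The geometric bound $\|f_{k+1}(x) - f_k(x)\| < 2^{-k+1}$ makes $(f_k(x))_k$ uniformly Cauchy, so $v := \lim_k f_k$ exists, is measurable as a pointwise limit of measurable functions, and satisfies $\rho(v(x), F(x)) = \lim_k \rho(f_k(x), F(x)) = 0$; closedness of $F(x)$ then gives $v(x) \in F(x)$, the desired selection. The main obstacle is the inductive step: one must choose the dense point so that it \emph{simultaneously} halves the distance to $F(x)$ (guaranteeing the limit lands in $F$) and stays close to the previous value (guaranteeing convergence), while keeping every level set measurable. Balancing these three requirements through the right triangle inequalities, and verifying that the resulting level sets are assembled from lower inverses of open balls, is where the real work lies.
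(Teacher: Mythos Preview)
Your argument is a correct rendition of the Kuratowski--Ryll-Nardzewski construction: the reduction from upper-inverse measurability to lower-inverse measurability is right, the inductive bounds $\rho(f_k(x),F(x))<2^{-k}$ and $\|f_{k+1}(x)-f_k(x)\|<2^{-k+1}$ are verified by the triangle inequalities you wrote, the level sets are measurable as you describe, and the pointwise limit lands in the closed set $F(x)$.

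The paper, however, does not prove this proposition at all. It is stated with an explicit citation (Theorem 18.13 of \cite{infinite}) and used as a black box; no argument is given in the text. So rather than matching or diverging from the paper's proof, you have supplied a proof where the paper simply invokes the literature. If the goal is to mirror the paper, a one-line reference suffices; if the goal is a self-contained write-up, your construction is standard and sound.
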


\begin{corollary}[Castaing's Theorem] \label{corollary:castaing} Let $F : X \rightrightarrows \R^p$ nonempty compact valued. Then $F$ is measurable if and only if there exists a sequence of measurable selections $\left(F_n\right)_{n \in \N}$ such that $\forall x \in X, F(x) = \overline{\left\{F_1(x), F_2(x),... \right\}}$.
\end{corollary}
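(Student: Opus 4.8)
The plan is to prove the two implications separately, treating the "if" direction (a Castaing representation forces measurability) as a direct verification and the converse (producing the representation from measurability) as the substantive part, where \Cref{prop:measurableselection} carries the load. Throughout I fix a countable dense subset $\{y_i\}_{i\in\N}$ of $\R^p$ and write $F^\ell(U):=\{x : F(x)\cap U\neq\emptyset\}$ for the lower inverse.

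For the direction assuming measurable selections $(F_n)_n$ with $F(x)=\overline{\{F_n(x):n\in\N\}}$, I would establish Hausdorff--Borel measurability of $F$ by checking that preimages of a generating subfamily of $\mathcal{B}_H(\mathcal{K}_p)$ lie in $\mathcal{F}$; it suffices to treat the Vietoris subbasis, namely the ``hit'' sets $\{K : K\cap U\neq\emptyset\}$ and the ``miss'' sets $\{K : K\subset U\}$ for $U$ open. For a hit set, openness of $U$ and density of the $F_n(x)$ give that $F(x)\cap U\neq\emptyset$ is equivalent to $F_n(x)\in U$ for some $n$, so the preimage is $\bigcup_n F_n^{-1}(U)\in\mathcal{F}$. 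For a miss set I would use the distance representation $\min_{z\in F(x)} d(z,\R^p\setminus U)=\inf_n d(F_n(x),\R^p\setminus U)$: the right-hand side is measurable in $x$ as a countable infimum of measurable maps, and compactness of $F(x)$ makes $F(x)\subset U$ equivalent to its positivity, yielding a measurable preimage.

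For the converse, given $F$ measurable I would build the representation by truncation against small balls. For $i,n\in\N$ set $X_{i,n}:=F^\ell(B(y_i,1/n))$ and
\[
F_{i,n}(x) :=
\begin{cases}
F(x)\cap \overline{B}(y_i,1/n) & \text{if } x\in X_{i,n},\\
F(x) & \text{otherwise.}
\end{cases}
\]
Each $X_{i,n}$ is measurable because $X_{i,n}=X\setminus F^u(\R^p\setminus B(y_i,1/n))$ and the upper inverse of the closed set $\R^p\setminus B(y_i,1/n)$ is measurable by the property recorded after the definition of measurable maps. Granting that each $F_{i,n}$ is a measurable, nonempty, compact-valued map, \Cref{prop:measurableselection} supplies a measurable selection $g_{i,n}$ of $F_{i,n}$, hence of $F$. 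The family $(g_{i,n})_{i,n}$ is then the desired Castaing representation: $\overline{\{g_{i,n}(x)\}}\subset F(x)$ since $F(x)$ is closed, and conversely, given $z\in F(x)$ and $\epsilon>0$, I first pick $n$ with $1/n<\epsilon/2$ and then $y_i$ with $\|z-y_i\|<1/n$, so that $x\in X_{i,n}$ and $\|g_{i,n}(x)-z\|\le\|g_{i,n}(x)-y_i\|+\|y_i-z\|\le 2/n<\epsilon$, proving $z\in\overline{\{g_{i,n}(x)\}}$.

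The step I expect to be the main obstacle is the measurability of the truncated correspondence $F_{i,n}$, that is, that the intersection of the measurable compact-valued map $F$ with the constant closed ball $\overline{B}(y_i,1/n)$ is again measurable on $X_{i,n}$; this is the single nontrivial intersection fact that must be secured before \Cref{prop:measurableselection} applies. I would handle it either through a distance-function characterization of measurability or by invoking the standard measurability-of-intersection lemma for set-valued maps, the remaining gluing and density estimates being routine.
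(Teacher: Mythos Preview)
The paper does not supply a proof of \Cref{corollary:castaing}; it is stated as a named classical result (Castaing's Theorem) and immediately followed by \Cref{remark:castaing} with no intervening proof environment. So there is nothing to compare your argument against.

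That said, your proposal is a correct and standard proof of the theorem. The ``if'' direction via the Vietoris subbasis is clean; your treatment of the miss sets through the distance-to-complement function is fine because $F(x)$ is compact. For the converse, the truncation construction $F_{i,n}(x)=F(x)\cap\overline{B}(y_i,1/n)$ on $X_{i,n}$ and the invocation of \Cref{prop:measurableselection} is exactly the usual route. You correctly isolate the one nontrivial step: measurability of the intersection of a measurable compact-valued map with a fixed closed ball. This is indeed a standard lemma (the intersection of a weakly measurable closed-valued map with a constant closed-valued map is measurable; see e.g.\ \cite[Theorem~18.5 or Lemma~18.4]{infinite}), and once you have it the remainder of your argument goes through as written. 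One minor point: in the definition of $F_{i,n}$ on $X_{i,n}$ you want the intersection with the \emph{closed} ball so that values remain compact, which you already do; just make sure the nonemptiness on $X_{i,n}$ is argued from $F(x)\cap B(y_i,1/n)\neq\emptyset$ together with compactness of $F(x)$, which forces $F(x)\cap\overline{B}(y_i,1/n)\neq\emptyset$ trivially.
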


\begin{remark}[Measurability of set-valued composition] \label{remark:castaing}{\rm \Cref{corollary:castaing} can be used to justify measurability of composed set-valued functions. For instance, given $g : \R^p \rightarrow \R$ continuous and $F : X \rightrightarrows \R^p$ compact valued and measurable, then $g \circ F$ is measurable. Indeed, let $\left(F_k\right)_{k \in \N}$ be a sequence of measurable selections given by Castaing's Theorem, such that $\forall x \in X, F(s) = \overline{\left\{F_1(x), F_2(x),... \right\}}$. Then by continuity of $g$, we have for all $x \in X,$ $g(F(x)) = g(\overline{\left\{F_1(x), F_2(x),... \right\}}) = \overline{\left\{g(F_1(x)), g(F_2(x)),... \right\}}$. The functions $g \circ F_i $ are all measurable and $g \circ F$ is compact valued by continuity of $g$, whence by Castaing's Theorem, $g \circ F$ is measurable.}
\end{remark}

\begin{definition}[Aumann integral] \label{def:aumann}{\rm Let $(X, \mathcal{F}, \mu)$ be a measure space and $F : X \rightrightarrows \R^p$ a set-valued map. Then the {\em integral} of $F$ with respect to the measure $\mu$ is
\begin{equation*}
    \int_X F(x) \di \mu(x) = \left\{\int_X v(x) \di \mu(x) \ \middle|\  v \text{ is integrable and for all } x \in X, v(x) \in F(x)\right\}.
\end{equation*}
}
\end{definition}

\smallskip

We  also use the expectation notation $\E_{\xi \sim P} \left[F(\xi)\right]=\int_X F(x) \di P(x)$ whenever $(X, \mathcal{F}, P)$ is a probability space and $\xi$ is a random variable with distribution $P$.

\begin{definition}{\rm Let $F : \R^m \rightrightarrows \R^p$ be a set-valued map.}
\begin{enumerate}   
\item (Graph closedness)  $F$  is {\em graph closed } if its graph defined by $\Graph F := \left\{(x,y) \in \R^m \times \R^p \ | \ y \in F(x) \right\}$,
is a closed subset of $\R^m \times \R^p$.
\item (Local boundedness) $F$ is  {\em locally bounded} if for all $x \in \R^m$, there exist a neighborhood $\mathcal{U}$ of $x$ and $M > 0$, such that for all $z \in U$ and $y \in F(z)$, $\|y\| < M$.  
\item (Upper semicontinuity) $F$ is  {\em upper semicontinuous} at $x \in \R^m$, if for each open subset $\mathcal{V}$ containing $F(x)$, there exists a neighborhood $\mathcal{U}$ of $x$ such that for all $z \in \mathcal{U}$, $F(z) \subset \mathcal{V}$.
\end{enumerate}
\end{definition}

\begin{definition}[Conservative gradient]{\rm Let $f : \R^p \rightarrow \R$ be locally Lipschitz continuous. A locally bounded and graph closed set-valued map $D : \R^p \rightrightarrows \R^p$ that is nonempty valued is called {\em a conservative  gradient} for $f$, if for all absolutely continuous curve $\gamma : [0, 1]  \rightarrow \R^p$, $f$ admits a chain rule with respect to $D$ along $\gamma$, i.e.,
\begin{equation*}
        \frac{\di}{\di t}(f \circ \gamma)(t) = \langle v,\dot{\gamma}(t)\rangle, \text{ for all }  v \in D(\gamma(t)) \text{  and almost all $t \in \left[0, 1\right]$.}
\end{equation*}}
\label{def:conservativeField}
\end{definition}
Lipschitz continuous functions $f:\R^p\to\R$ admitting a conservative gradient are called  \textit{path-differentiable}. They are central to our analysis.

\subsection{Conservative gradient of integral functions}

In this part $(S, \mathcal{A}, \mu)$ is a complete measure space. We consider a function $f : \R^p \times S \xrightarrow[]{} \R$ such that for almost all $s \in S$, $f(\cdot, s)$ is path-differentiable with conservative gradient $D(\cdot, s)$. Our goal is to show a result of ``conservative differentiation" under the integral sign  in order to get a conservative calculus for the parametrized integral  $\int_{S} f(\cdot, s) \di \mu(s)$. 

First, we provide a result of derivation under the integral sign when the integrand is absolutely continuous in its first variable. We shall use the following lemma.
\begin{lemma}[Measurability of partial derivatives] \label{derivabilitydomain} Let $U \subset \R$ open and $f : U \times S \xrightarrow[]{} \R$ a $(\mathcal{B}(\R) \times \mathcal{A})$-measurable function. We suppose that there exists  $M \subset S$ of full measure such that for all $s \in M$, $f(\cdot, s)$ is absolutely continuous. Then $\frac{\partial f}{\partial x}$ is jointly measurable and is defined almost everywhere in $U \times S$. Also, for almost all $x \in U$, $\frac{\partial f}{\partial x}(x, s)$ is defined for almost all $s \in S$.
\end{lemma}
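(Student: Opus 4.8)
The plan is to build an explicit jointly measurable function that agrees with $\frac{\partial f}{\partial x}$ wherever the latter genuinely exists, and then to transport the one-dimensional almost-everywhere differentiability statement onto the product space by Tonelli and Fubini. Since $U$ is open, for each $x\in U$ we have $x+1/n\in U$ for all large $n$; so I first define the difference quotients $\Delta_n(x,s) = n\,(f(x+1/n,s)-f(x,s))$, which are jointly measurable because $f$ is and $(x,s)\mapsto(x+1/n,s)$ is measurable. Then $g := \limsup_{n\to\infty}\Delta_n$ is jointly measurable and will serve as the candidate version of $\frac{\partial f}{\partial x}$.

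The delicate step is to show that the set $D$ on which the genuine (two-sided, all-steps) partial derivative exists is measurable, so that Tonelli may be applied to it: a priori the oscillation characterising differentiability is a supremum over uncountably many real increments, and $f$ need not be continuous in its first variable. This is exactly where absolute continuity enters. For $s\in M$ the map $h\mapsto\frac{f(x+h,s)-f(x,s)}{h}$ is continuous, so the oscillation can be computed over rational increments,
$$ \omega_\delta(x,s) := \sup\Bigl\{\,\Bigl|\tfrac{f(x+q,s)-f(x,s)}{q}-\tfrac{f(x+q',s)-f(x,s)}{q'}\Bigr| : q,q'\in\Q,\ 0<|q|,|q'|<\delta \Bigr\}, $$
which is jointly measurable (a countable supremum), and the Cauchy criterion gives $D\cap(U\times M)=\{(x,s): \inf_n\omega_{1/n}(x,s)=0\}\cap(U\times M)$, since $\omega_\delta$ is monotone in $\delta$ and its vanishing forces a finite limit. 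Hence $D$ coincides, off the $\mu$-null set $U\times(S\setminus M)$, with a jointly measurable set, which removes the obstruction.

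It remains to conclude. For each $s\in M$, absolute continuity of $f(\cdot,s)$ yields differentiability at $\lambda$-almost every $x$, so the section $D_s$ is $\lambda$-co-null; since $\mu(S\setminus M)=0$, Tonelli applied to the jointly measurable complement of $D$ gives $(\lambda\otimes\mu)\bigl((U\times S)\setminus D\bigr)=0$, i.e.\ $\frac{\partial f}{\partial x}$ is defined $(\lambda\otimes\mu)$-almost everywhere. On $D$ the existence of the full limit forces the sequential limit to agree with it, so $\frac{\partial f}{\partial x}=g$ there; as $g$ is jointly measurable and $D$ is co-null, $g$ is the desired jointly measurable version. Finally, Fubini's theorem applied to the null set $(U\times S)\setminus D$ shows that for $\lambda$-almost every $x\in U$ its section is $\mu$-null, which is precisely the last assertion that $\frac{\partial f}{\partial x}(x,\cdot)$ is defined $\mu$-almost everywhere for $\lambda$-almost every $x$. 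The only genuinely nontrivial point is the joint measurability of the differentiability set addressed in the second paragraph; everything else is a routine application of Tonelli and Fubini.
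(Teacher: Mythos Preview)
Your proposal is correct and follows essentially the same approach as the paper: both arguments exploit the continuity of $f(\cdot,s)$ for $s\in M$ to reduce the uncountable $\limsup/\liminf$ (or oscillation) defining the differentiability set to a countable family of rational increments, yielding joint measurability, and then invoke Fubini/Tonelli on the indicator of the complement to transport the sectionwise almost-everywhere differentiability to the product and back to the other variable. The only cosmetic difference is that the paper characterises existence of the derivative via $\limsup_{h\to 0,\,h\in\Q}=\liminf_{h\to 0,\,h\in\Q}\in\R$, whereas you use the equivalent Cauchy oscillation criterion $\inf_n\omega_{1/n}=0$ and a separate sequential candidate $g$.
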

\begin{proof}
Define the following quantities for all $x \in U$ and $s \in M$:
\begin{equation*}
    f'_u(x, s) = \limsup_{h\rightarrow 0} \frac{f(x + h, s) - f(x, s)}{h} \text{ and }    f'_l(x, s) = \liminf_{h\rightarrow 0} \frac{f(x + h, s) - f(x, s)}{h}.
\end{equation*}
By continuity of $f$ both limit operators may operate only  in $\Q$ without changing the value of $f'_u$ and $f'_l$. Whence $f'_l$ and $f'_u$ are measurable and so is $\frac{\partial f}{\partial x}$. Furthermore, the  domain $E$ of $\frac{\partial f}{\partial x}$  is  $\left\{(x, s) \in U \times S \ \middle| \ f'_l(x, s) = f'_u(x, s), \: - \infty < f_u(x, s) < +\infty \right\}$ which is measurable. Applying Fubini's Theorem yields
\begin{equation*}
    \int_{U \times S} \mathbbm{1}_{E^c}(x, s) \di (\lambda \times \mu) \:(x, s) = \int_{S} \int_U \mathbbm{1}_{E^c}(x, s) \di x \di \mu( s) = \int_{U} \int_{S} \mathbbm{1}_{E^c}(x, s) \di \mu(s) \di x.
\end{equation*}
Since $f(\cdot, s)$ is absolutely continuous for $s \in M$, it is differentiable almost everywhere, hence  $\forall s \in M, \int_U \mathbbm{1}_{E^c}(x, s) \di x = 0$ and the second integral is zero. The third integral vanishes, so  for almost all $x \in U$, $\int_{S} \mathbbm{1}_{E^c}(x, s) \di \mu( s )= 0$, i.e., $\frac{\partial f}{\partial x}(x, s)$ is defined for almost all $s$, which concludes the proof.
\end{proof}

\begin{proposition}[Differentiation of absolutely continuous integrals] \label{leibniz} Let $U \subset \R$ open and $f : U \times S \xrightarrow[]{} \R$ such that:

\begin{enumerate}
    \item For all $x \in U$, $f(x, \cdot)$ is integrable.
    \item For almost all $s \in S$, $f(\cdot, s)$ is absolutely continuous.
    \item $\frac{\partial f}{\partial x}$ is locally integrable, jointly in $x$ and $s$:  for any compact interval $\left[a, b\right] \subset U$,
    \begin{equation*}
        \int_{S} \int_a^b  \left|\frac{\partial f}{\partial x}(x, s)  \right| \di x \di \mu(s)  < \infty.
    \end{equation*}
\end{enumerate}
Then, the function $g : x \mapsto \int_{S} f(x, s) \di \mu(s)$, is absolutely continuous, differentiable at almost all $x \in U$ with $g'(x) = \int_{S} \frac{\partial f}{\partial x}(x, s) \di \mu(s)$.
\end{proposition}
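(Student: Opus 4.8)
The plan is to combine the fundamental theorem of calculus for absolutely continuous functions with Fubini's theorem, relying on \Cref{derivabilitydomain} to supply the joint measurability of $\frac{\partial f}{\partial x}$. Since absolute continuity of $g$ is a local property, I would fix an arbitrary compact interval $[a,b] \subset U$ and argue there. By \Cref{derivabilitydomain}, $\frac{\partial f}{\partial x}$ is $(\mathcal{B}(\R) \times \mathcal{A})$-measurable and defined $(\lambda \times \mu)$-almost everywhere on $[a,b] \times S$; combined with hypothesis (3), Tonelli's theorem lets me interchange the order of integration of the nonnegative function $\left|\frac{\partial f}{\partial x}\right|$, so that for almost all $x \in [a,b]$ the integral $G(x) := \int_S \frac{\partial f}{\partial x}(x,s) \di \mu(s)$ is well defined and $G$ is integrable on $[a,b]$.

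The core computation is to control the increments of $g$. For $a \le x_1 \le x_2 \le b$ one writes
\[
 g(x_2) - g(x_1) = \int_S \bigl( f(x_2,s) - f(x_1,s) \bigr) \di \mu(s).
\]
Using hypothesis (2), for almost all $s$ the map $f(\cdot,s)$ is absolutely continuous, so the fundamental theorem of calculus gives $f(x_2,s) - f(x_1,s) = \int_{x_1}^{x_2} \frac{\partial f}{\partial x}(x,s) \di x$ for almost all $s$; the exceptional set of $s$ being $\mu$-null, it does not affect the outer integral. Substituting and invoking Fubini's theorem — now licensed by the absolute integrability from hypothesis (3) together with the joint measurability above — I would interchange the two integrals to obtain
\[
 g(x_2) - g(x_1) = \int_{x_1}^{x_2} \int_S \frac{\partial f}{\partial x}(x,s) \di \mu(s) \di x = \int_{x_1}^{x_2} G(x) \di x.
\]

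This exhibits the increments of $g$ as the indefinite integral of the locally integrable function $G$, which immediately yields that $g$ is absolutely continuous on $[a,b]$, hence on every compact subinterval of $U$. The Lebesgue differentiation theorem then gives $g'(x) = G(x) = \int_S \frac{\partial f}{\partial x}(x,s) \di \mu(s)$ for almost all $x$, which is the claim.

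The main obstacle I anticipate is the careful bookkeeping of the ``almost all'' qualifiers when chaining the FTC representation with the Tonelli/Fubini interchange: one must verify that $\frac{\partial f}{\partial x}$ is genuinely jointly measurable on $[a,b]\times S$ (which is precisely what \Cref{derivabilitydomain} provides), that the FTC identity holds on a full-$\mu$-measure set of $s$ so that its integral is meaningful, and that the interchange of integrals rests on the finiteness assumption (3) rather than being tacitly assumed. Once measurability and absolute integrability are secured, the remaining steps are routine.
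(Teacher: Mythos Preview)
Your proof is correct and follows essentially the same approach as the paper: write the increment $g(x_2)-g(x_1)$ as $\int_S\int_{x_1}^{x_2}\frac{\partial f}{\partial x}\,dx\,d\mu$ via the fundamental theorem of calculus for absolutely continuous functions, swap the integrals by Fubini using hypothesis~(3) and the joint measurability from \Cref{derivabilitydomain}, and conclude that $g$ is the indefinite integral of $G$. The paper argues with the fixed endpoints $a,b$ of the compact interval rather than arbitrary $x_1\le x_2$, and phrases the final step as the fundamental theorem of calculus rather than the Lebesgue differentiation theorem, but the substance is identical.
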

\begin{proof} Let $f : U \times S \xrightarrow[]{} \R$ satisfying all the assumptions. We consider the function $g : x \in U \mapsto \int_{S} f(x, s) \di \mu(s)$ and $a < b$ in $U$. From \Cref{derivabilitydomain}, $\frac{\partial f}{\partial x}(x, s)$ and exists a.e. in $(x, s) \in U \times S$ and admits a measurable extension. The a.e. defined function $\frac{\partial f}{\partial x}$ is identified with some measurable extension. Since for almost all $s \in S$ $f(\cdot, s)$ is absolutely continuous, the fundamental theorem of calculus for Lebesgue integration (see Theorem 14 in Section 4, Chapter 5 of \cite{royden1968real}) implies that 
\begin{equation*}
    g(b) - g(a) = \int_{S} \left[ f(b, s) - f(a, s) \right] \di \mu(s)
                = \int_{S} \int_{a}^b \frac{\partial f}{\partial t}(t, s) \di t \di \mu(s).
\end{equation*}
Under Assumption 3, by measure completeness,  Fubini-Lebesgue's Theorem applies: $ g(b) - g(a) = \int_{a}^b \int_{S} \frac{\partial f}{\partial t}(t, s) \di \mu(s) \di t$.
The function $x \mapsto \int_{S} \frac{\partial f}{\partial x}(x, s) \di \mu(s)$ is integrable on $\left[a, b\right]$ so $g$ is absolutely continuous. By the fundamental theorem of calculus, $g'(x)$ is defined for almost all $x \in U$ with $g'(x) = \int_{S} \frac{\partial f}{\partial x}(x, s) \di \mu(s)$.
\end{proof}

\bigskip

The following result is one of the cornerstones of this paper. It can be seen as interchanging conservative gradient and integral operations.

\begin{theorem}[Path-differentiability of parametrized integrals] \label{theorem:leibnizconservative} Let $D :\R^p \times S \rightrightarrows \R^p$ and $f : \R^p \times S \xrightarrow[]{} \R$ such that:  
\begin{enumerate}
    \item For all $x \in \R^p$, $f(x, \cdot)$ is integrable.
    \item For almost all $s \in S$, $f( \cdot, s)$ is locally Lipschitz continuous and $D(\cdot, s)$ is conservative for $f( \cdot, s)$.
    \item \label{item:jointmeasurable} $D : \R^p \times S \rightrightarrows \R^p$ is jointly measurable in $\mathcal{B}(\R^p) \times \mathcal{A}$.
    \item \label{item:boundedconservative} For all compact subset $C \subset \R^p$, there exists an integrable function $\kappa : S \xrightarrow[]{} \R_{+}$ such that for all $ (x, s) \in C \times S,\ \|D(x, s)\| \leq \kappa(s)$, where for $(x, s) \in \R^p \times S$, $\|D(x, s)\|:= \underset{y \in D(x, s)}{\sup} \: \|y\|$.
\end{enumerate}
Then $\int_{S} f(\cdot, s) \di \mu(s)$ is path-differentiable and $\int_{S} D(\cdot, s) \di \mu( s)$ is a conservative gradient for $\int_{S} f(\cdot, s) \di \mu(s)$.
\end{theorem}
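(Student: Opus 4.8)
The plan is to verify, one by one, the defining properties of a conservative gradient from \Cref{def:conservativeField} for the Aumann integral (\Cref{def:aumann}) $D_g \colon x \mapsto \int_S D(x,s)\di\mu(s)$, and to establish the chain rule along curves for $g := \int_S f(\cdot,s)\di\mu(s)$. I would start with the easy structural facts. Since $D(\cdot,s)$ is conservative for $f(\cdot,s)$, one has $\partial^c f(\cdot,s)\subseteq\conv D(\cdot,s)$, so hypothesis (4) forces $f(\cdot,s)$ to be $\kappa(s)$-Lipschitz on any compact convex $C$; integrating over $s$ shows $g$ is locally Lipschitz with constant $\int_S\kappa\di\mu$ on $C$. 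For fixed $x$, joint measurability (hypothesis (3)) makes $D(x,\cdot)$ measurable, so \Cref{prop:measurableselection} yields a measurable selection, which is integrable by the domination $\|D(x,s)\|\le\kappa(s)$; hence $D_g(x)\neq\emptyset$, and local boundedness $\|D_g(x)\|\le\int_S\kappa\di\mu$ on $C$ is immediate.

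Next I would prove the chain rule, which is the conceptual core. Fix an absolutely continuous $\gamma\colon[0,1]\to\R^p$ and set $\tilde f(t,s):=f(\gamma(t),s)$. I would apply \Cref{leibniz} to $\tilde f$: its hypothesis (1) is ours; $\tilde f(\cdot,s)$ is absolutely continuous as a composition of a locally Lipschitz map with $\gamma$; and local integrability of $\partial_t\tilde f$ follows from the chain rule for $D(\cdot,s)$, which gives $|\partial_t\tilde f(t,s)|\le\|D(\gamma(t),s)\|\,\|\dot\gamma(t)\|\le\kappa(s)\|\dot\gamma(t)\|$, whose double integral is $(\int_S\kappa\di\mu)(\int_0^1\|\dot\gamma\|)<\infty$. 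Thus $g\circ\gamma$ is absolutely continuous with $(g\circ\gamma)'(t)=\int_S\partial_t\tilde f(t,s)\di\mu(s)$ for almost all $t$.

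The delicate point here is to interchange the two ``almost every'' statements by a Fubini argument. For each $s$ in a full-measure set, conservativity of $D(\cdot,s)$ gives $\partial_t\tilde f(t,s)=\langle w,\dot\gamma(t)\rangle$ for every $w\in D(\gamma(t),s)$ and almost all $t$; equivalently, the ``bad set'' $N\subset[0,1]\times S$ where this identity fails (or where the relevant derivatives do not exist) has all its $s$-sections $\lambda$-null. The set $N$ is measurable, using \Cref{derivabilitydomain} for joint measurability of $\partial_t\tilde f$ together with the joint measurability of $(t,s)\mapsto D(\gamma(t),s)$. Fubini then makes $N$ itself $(\lambda\times\mu)$-null, so for almost every $t$ its $t$-section is $\mu$-null. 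For such $t$ and any $v=\int_S u\di\mu\in D_g(\gamma(t))$ with $u(s)\in D(\gamma(t),s)$, we obtain $\langle v,\dot\gamma(t)\rangle=\int_S\langle u(s),\dot\gamma(t)\rangle\di\mu(s)=\int_S\partial_t\tilde f(t,s)\di\mu(s)=(g\circ\gamma)'(t)$, which is exactly the chain rule.

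The step I expect to be the main obstacle is graph-closedness of $D_g$. I would first reduce to convex values, observing that $\conv D(\cdot,s)$ is still conservative for $f(\cdot,s)$, still satisfies hypotheses (2)--(4), and has an Aumann integral containing $D_g$; so assume each $D(x,s)$ convex compact. Given $(x_n,v_n)\to(x,v)$ with $v_n=\int_S u_n\di\mu$ and $u_n(s)\in D(x_n,s)$, for large $n$ the points $x_n$ lie in a compact neighborhood of $x$ where $\|u_n(s)\|\le\kappa(s)$; the sequence $(u_n)$ is therefore dominated by $\kappa\in L^1$, hence uniformly integrable, hence relatively weakly compact in $L^1(\mu;\R^p)$ by the Dunford--Pettis theorem. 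Extracting $u_n\rightharpoonup u$ gives $v=\lim\int_S u_n\di\mu=\int_S u\di\mu$. By Mazur's lemma, suitable convex combinations of the $u_n$ converge strongly, hence $\mu$-almost everywhere along a subsequence, to $u$; since $D(\cdot,s)$ is upper semicontinuous (being graph-closed and locally bounded) and convex-valued, these combinations eventually lie within any prescribed distance of $D(x,s)$, forcing $u(s)\in D(x,s)$ for almost all $s$. Thus $v\in D_g(x)$, establishing graph-closedness. Combined with the chain rule and the structural facts above, this shows $g$ is path-differentiable with conservative gradient $D_g=\int_S D(\cdot,s)\di\mu(s)$.
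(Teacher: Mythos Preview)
Your chain-rule argument is essentially the paper's own proof: both compose $f(\cdot,s)$ with the curve $\gamma$, use \Cref{derivabilitydomain} plus Fubini to swap the two ``almost everywhere'' quantifiers, then invoke \Cref{leibniz} to interchange $\int_S$ and $\frac{\di}{\di t}$. The order of the two steps is inverted (you apply \Cref{leibniz} first and the Fubini swap second, the paper does the opposite), but the content is identical.

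The difference lies in graph-closedness. The paper dispatches this in one line by citing a known closedness result for Aumann integrals of upper semicontinuous, compact-valued multifunctions (Shapiro, \cite[Theorem~2]{shapiro}), which applies directly to $D$ without convexity. Your route is more self-contained: Dunford--Pettis for weak $L^1$ compactness of the selections, then Mazur to upgrade to strong/pointwise convergence, then upper semicontinuity of $D(\cdot,s)$ to land in $D(x,s)$. The argument is correct, but your reduction ``pass to $\conv D(\cdot,s)$ first'' only shows that $\int_S \conv D(\cdot,s)\di\mu(s)$ is graph-closed, hence conservative. This is enough for the path-differentiability of $g$, and for the chain rule of the smaller map $D_g=\int_S D(\cdot,s)\di\mu(s)$ (which is inherited downward); but it does \emph{not} by itself give graph-closedness of $D_g$, which is part of the literal statement you are proving. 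If you want to keep your direct approach, either (i) note that when $\mu$ is non-atomic Lyapunov/Richter gives $\int_S D=\int_S\conv D$ so nothing is lost, and treat the atomic part as a (closed) countable Minkowski sum separately; or (ii) simply invoke an off-the-shelf closedness theorem for Aumann integrals as the paper does, and reserve your Dunford--Pettis/Mazur machinery for settings where such a citation is unavailable.
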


\begin{proof}
With the chain rule definition of conservativity, \Cref{def:conservativeField}, we will show that the problem reduces to the differentiation of an absolutely continuous integral and then, we shall use \Cref{leibniz}  to conclude. 
Let $f : \R^p \times S \rightarrow \R$ and $D : \R^p \rightrightarrows \R^p$ verifying the assumptions 1 to 4 displayed above.

Following the conservative gradient definition, \Cref{def:conservativeField}, in particular we verify $\int_{S} D(\cdot, s) \di \mu(s)$ is graph closed, nonempty valued and locally bounded. By the measurable selection theorem, see \Cref{prop:measurableselection}, $\int_S D(\cdot, s) \di \mu(s)$ is nonempty valued. It is locally bounded by item 4. For almost all $s \in S$, $D(\cdot, s)$ is graph closed and locally bounded, hence it is upper semicontinuous by \cite[Corollary 1 in Chapter 1, Section 1]{aubin}.
  By Aumann's integral properties, see \cite[Theorem 2]{shapiro}, and since $D(\cdot, s)$ is upper semicontinuous, compact valued for all $s$, then $\int_{S} D(\cdot, s) \di\mu( s)$ is graph closed. 

Now, we have to verify the chain rule property. Let $\gamma : \left[0, 1\right] \xrightarrow[]{} \R^p$ be any absolutely continuous curve. By hypothesis, there exists a set of full measure $M \subset S$ such that for all $s \in M$, $f(\cdot, s)$ has conservative gradient $D(\cdot, s)$. We have $\forall s \in M$, $f(\gamma(\cdot), s)$ is absolutely continuous because $f$ is locally Lipschitz in $x \in C$ and $\gamma$ is absolutely continuous. Thus, $\forall s \in M$ $f(\gamma(\cdot), s)$ is differentiable a.e. and the chain rule property (\ref{eq:chainrulealmosteverywhere}) holds for almost all $t \in \left[0, 1\right]$, i.e.,
\begin{equation}
    \label{eq:chainrulealmosteverywhere}
    \forall v \in D(\gamma(t), s), \: \frac{\di}{\di t} f(\gamma(t), s) = \langle v, \dot{\gamma}(t) \rangle.
\end{equation}
Let $E\subset [0,1]\times S$ be the domain of existence of $\frac{\di}{\di t} f(\gamma(t), s)$. $E$ is measurable and of full measure according to \Cref{derivabilitydomain}. We want to verify the measurability of the domain of validity of \cref{eq:chainrulealmosteverywhere}, which is
$E \: \cap \: \left\{(t, s) \in [0, 1] \times S \ | \ \varphi(t, s) = 0 \right\}$, where  $\varphi(t, s) =  \frac{\di}{\di t} f(\gamma(t), s) - \langle D(\gamma(t), s), \dot{\gamma}(t) \rangle$ for all $ (t, s) \in E$ and $\varphi(t,s) = 1$ elsewhere. By Castaing's Theorem (see \Cref{remark:castaing}) $\varphi$ is measurable. The set $\left\{(t, s) \in [0, 1] \times S \ | \ \varphi(t, s) = 0 \right\}$ is exactly the upper inverse of $\left\{0\right\}$ by $\varphi$, $\varphi^u(\left\{ 0 \right\})$ hence it is jointly measurable in $(\R \times S, \mathcal{B}(\R) \otimes \mathcal{A})$. Similarly as in the proof of \Cref{derivabilitydomain}, by Fubini's Theorem,  $\varphi^u(\left\{ 0 \right\})$ is of full measure and there exists $I_1 \subset \left[0, 1\right]$ of full measure such that for all $ t \in I$ \cref{eq:chainrulealmosteverywhere} holds for almost all $s \in S$.

Let $t \in I_1$. From \cref{eq:chainrulealmosteverywhere}, we can say that, for any measurable selection $v : s \xrightarrow[]{} \R^p$ of $D(\gamma(t), \cdot)$, we have for almost all $s \in S$
\begin{equation}
    \label{eq:integratingselection}
    \frac{\di}{\di t} f(\gamma(t), s) = \langle v(s), \dot{\gamma}(t) \rangle.
\end{equation}
Integrating (\ref{eq:integratingselection}) over $s \in S$ we have for any $a$ in the Aumann integral $\int_{S} D(\gamma(t), s) \di s$ and measurable selection $v$ such that $a = \int_{S} v(s) \di s$,
\begin{equation}
    \label{eq:integratingselection2}
    \int_{S} \frac{\di}{\di t} f(\gamma(t), s) \di\mu(s) = \int_{S}\langle v(s), \dot{\gamma}(t)  \rangle \di \mu(s) = \langle a, \dot{\gamma}(t) \rangle.
\end{equation}
On the other hand, $\gamma(\left[ 0, 1\right])$ is compact  by continuity of $\gamma$. Let $\kappa$ given by assumption 4 for the compact set $C = \gamma([0,1])$. The Cauchy-Schwarz inequality gives for all $(t, s) \in \left[0, 1\right] \times S$, 
$|\langle v(s), \dot{\gamma}(t) \rangle| \leq \|D(\gamma(t), s)\| \|\dot{\gamma}(t) \|\leq \kappa(s) \|\dot{\gamma}(t)\|$. 
Since $\gamma$ is absolutely continuous, $\dot{\gamma}$ is integrable on $\left[0, 1\right]$ hence the function $(t, s) \mapsto \kappa(s)\| \dot{\gamma}(t) \|$ is locally integrable jointly in $(t,s)$, and so is $(t, s) \mapsto \frac{\di}{\di t}f(\gamma(t), s) = \langle v(s), \dot{\gamma}(t)\rangle$.
\Cref{leibniz} now applies to $(t,s) \mapsto f(\gamma(t), s)$, hence there exists $I_2$ of full measure such that
\begin{equation}
    \label{leibnizstep}
    \forall t \in I_2,\: \int_{S} \frac{\di}{\di t} f(\gamma(t), s) \di \mu(s) =  \frac{\di}{\di t} \int_{S}  f(\gamma(t), s) \di\mu(s).
\end{equation}
Combining \cref{eq:integratingselection2} which holds on $I_1$ and \cref{leibnizstep} which holds on $I_2$ we have
\begin{equation*}
    \forall t \in I_1 \cap I_2, \:\forall  a \in \int_{S} D(\gamma(t), s) \di \mu(s),\: \frac{\di}{\di t} \int_{S}  f(\gamma(t), s) \di \mu(s) = \langle a, \dot{\gamma}(t) \rangle \hspace{1cm}
\end{equation*}
and $ I_1 \cap I_2$ is of full measure. Finally we have shown that $\int_{S} D(\cdot, s) \di \mu(s)$ is nonempty, compact, valued graph closed, and verifies the chain rule property, hence it is a conservative gradient for $\int_{S} f(\cdot, s) \di \mu(s)$.
\end{proof}

\subsection{Application to stochastic approximation}
\label{section:differentialinclusion}

Let $P$ be a probability measure on $(S, \mathcal{A})$,  denote $\operatorname{supp} P$ its support, and consider a jointly measurable set-valued map $D : \R^p \times S \rightrightarrows \R^p$ such that for almost all $s \in S$, $f( \cdot, s)$ is locally Lipschitz and $D(\cdot, s)$ is a convex-valued conservative gradient for $f( \cdot, s)$. We consider the sequence $(w_k)_{k \in \N}$ defined by (\ref{eq:algo_intro}) where $v$ is now a measurable selection of $D$, i.e., for all $(w,s) \in \R^p \times S$, $v(w, s) \in D(w,s)$. For all $k \in \N^*$, the truncated random sequence  $(w_0, \ldots, w_k)$ is defined on the product probability space $(S^k, \mathcal{A}^{\otimes \N}, P^{\otimes k})$, and the whole trajectory $(w_k)_{k \in \N}$ is defined on the countable product $(S^{\N}, \mathcal{A}^{\otimes k }, P^{\otimes \N})$. When there is no ambiguity, the term ``almost sure" implicitly refers  to these spaces.

In order to study the sequence $(w_k)_{k \in \N}$, we  use the nonsmooth ODE methods developed in \cite{benaim, bianchi2021closed}. To this end, we consider the set-valued map
\begin{equation*}
    D_{\mathcal{J}} : w \mapsto \E_{\xi \sim P} [D(w,\xi)]
\end{equation*}
and the differential inclusion
\begin{equation}
    \label{eq:differentialinclusion}
    \dot{w} \in -D_{\mathcal{J}}(w).
\end{equation}
A \textit{solution} to the differential inclusion (\ref{eq:differentialinclusion}) with initial point $w_0 \in \R^p$ is an absolutely continuous curve $w : \R_+ \xrightarrow[]{} \R^p$ such that $w(0) = w_0$ and for almost all $t \in \R_+$, $\dot{w}(t) \in -D_{\mathcal{J}}(w(t))$. 

\begin{remark}[Solution to the differential inclusion on $\R_+$]
	\label{rem:solutionFlow} 	Since our analysis is based on a nonsmooth ODE method, the differential inclusion (\ref{eq:differentialinclusion}) must admit solutions. Available theory \cite{Filippov1988DifferentialEW,aubin} requires a convex valued conservative gradient $D$, which is satisfied for $D = \partial^c_w f$ or replacing $D$ by its convex hull.
	
	The flow of the differential inclusion (\ref{eq:differentialinclusion}) is then nonempty and well defined on $\R_+$ as $\mathcal{J}$ in \eqref{eq:expectation_minimization} is assumed to be bounded below. 
	Indeed, consider a strict lower bound $\mathcal{J}^* \in \R$. 
	Fix $T > 0$ and $w_0 \in \R^p$,  let $\mathcal{D} = [-T,T] \times \overline{B(w_0, r)}$ with $r := 2T\sqrt{(\mathcal{J}(w_0) - \mathcal{J}^*)}>0$, which is closed, bounded domain and contains $w_0$ and $t = 0$. 
	Since $D_\mathcal{J}$ is a conservative gradient, it is nonempty valued, and graph closed, locally bounded hence compact valued and upper semicontinuous. 
	It is furthermore convex valued by convexity of $D$ since set-valued integration preserves convexity. 

	We are in the setting of \cite[Chapter 2, Section 7]{Filippov1988DifferentialEW}, and by Theorem 1 in \cite[Chapter 2, Section 7]{Filippov1988DifferentialEW}, there exists $d > 0$ with $d < T$ such that (\ref{eq:differentialinclusion}) admits at least a solution on $[0,d]$. Let $w$ with $w(0)= w_0$ be an arbitrary such solution. By Theorem 2 \cite[Chapter 2, Section 7]{Filippov1988DifferentialEW},  $w$ can be continued to the boundary of $\mathcal{D}$, i.e., to $t = T$ or $\| w_0 - w(t) \| = r$. Since $D_\mathcal{J}$ is a conservative gradient for $\mathcal{J}$, one has for all $t \in [0, d]$,
    \begin{align*}
        \left(\frac{1}{t} \int_0^t \|\dot{w}(s)\|  \di s \right)^2  \leq \frac{1}{t}  \int_0^t \|\dot{w}(s)\|^2 \di s &=- \int_0^t \langle D_{\mathcal{J}}(w(s) ), \dot{w}(s) \rangle \di s   \\ &= \mathcal{J}(w_0) - \mathcal{J}(w(t))
    \end{align*}
	so that $\|w_0 - w(t)\| \leq  \int_0^t \|\dot{w}(s)\|  \di s  \leq t\sqrt{(\mathcal{J}(w_0) - \mathcal{J}^*)}$. However $t\sqrt{(\mathcal{J}(w_0) - \mathcal{J}^*) }< r$, hence the solution $w$ can be continued to $t = T$. 
	Since $w_0$, $T$ and $w$ were arbitrary, the flow of (\ref{eq:differentialinclusion}) is nonempty and well defined on $\R_+$.

\end{remark}

We define the {\em set-valued flow} $\Phi_t$, given at all $w_0 \in \R^p$ and $t \in \R_+$ as
\begin{equation*}
\Phi_t(w_0) := \left\{w(t) \ | \ w : \R_+ \rightarrow \R^p \text{ is a solution of (\ref{eq:differentialinclusion}) with } w(0) = w_0 \right\}.
\end{equation*}
We also recall the definition of a Lyapunov function for a set-valued flow from \cite{benaim}:
\begin{definition}[Lyapunov function for a set]{\rm A continuous function  $\mathcal{F}$ is a {\em Lyapunov function for a set $S\subset \R^p$} and for the dynamical system (\ref{eq:differentialinclusion}) if
\begin{align*}
    &\forall x \in \R^p \setminus S\;, \forall t > 0, \forall y \in \Phi_t(x), \:\mathcal{F}(y) < \mathcal{F}(x), \\ &\forall x \in S,\; \forall t \geq 0, \forall y \in \Phi_t(x), \:\mathcal{F}(y) \leq \mathcal{F}(x).
\end{align*}
}
\end{definition}

We define furthermore the critical set associated to $D_{\mathcal{J}}$, that is, $\crit D_\mathcal{J} := \left\{x \in \R^p \ | \ 0 \in D_{\mathcal{J}}(x) \right\}$, and show the following property:

\begin{lemma}[Conservative gradient and Lyapunov function] \label{lemma:lyapunovcondition} Let $D_\mathcal{J}$ a conservative gradient for $\mathcal{J}$. Then $\mathcal{J}$ is a Lyapunov function for $\crit D_{\mathcal{J}}$ and the differential inclusion $\dot{w} \in - D_{\mathcal{J}}(w)$.
\end{lemma}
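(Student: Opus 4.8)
The plan is to exploit the chain-rule characterization of conservative gradients (\Cref{def:conservativeField}) to establish an energy-dissipation identity along every solution of the differential inclusion, and then to read off the two Lyapunov inequalities directly from it.

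First I would fix a solution $w : \R_+ \to \R^p$ of $\dot{w} \in -D_{\mathcal{J}}(w)$ with $w(0) = x$. Since $w$ is absolutely continuous and $D_{\mathcal{J}}$ is conservative for $\mathcal{J}$, the chain rule gives for almost all $t$ that $\frac{\di}{\di t}\mathcal{J}(w(t)) = \langle v, \dot{w}(t) \rangle$ for \emph{every} $v \in D_{\mathcal{J}}(w(t))$. Because $\dot{w}(t) \in -D_{\mathcal{J}}(w(t))$ almost everywhere, the choice $v = -\dot{w}(t)$ is admissible and yields the dissipation identity $\frac{\di}{\di t}\mathcal{J}(w(t)) = -\|\dot{w}(t)\|^2 \leq 0$, exactly the computation already carried out in \Cref{rem:solutionFlow}. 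Integrating over $[0,t]$ produces $\mathcal{J}(w(t)) - \mathcal{J}(x) = -\int_0^t \|\dot{w}(s)\|^2 \di s \leq 0$.

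The non-increase inequality for points of $\crit D_{\mathcal{J}}$ is then immediate, since this estimate holds for every $x$; in particular for $x \in \crit D_{\mathcal{J}}$ it gives $\mathcal{J}(y) \leq \mathcal{J}(x)$ for every $y = w(t) \in \Phi_t(x)$ and every $t \geq 0$. For the strict decrease away from $\crit D_{\mathcal{J}}$ I would argue by contraposition: suppose $\mathcal{J}(w(t)) = \mathcal{J}(x)$ for some $t > 0$. As $s \mapsto \mathcal{J}(w(s))$ is non-increasing, it must then be constant on $[0,t]$, so $\int_0^t \|\dot{w}(s)\|^2 \di s = 0$ and hence $\dot{w}(s) = 0$ for almost all $s \in [0,t]$. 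Absolute continuity of $w$ forces $w \equiv x$ on $[0,t]$, and the inclusion $\dot{w}(s) \in -D_{\mathcal{J}}(w(s))$, valid at almost every $s$, then reads $0 \in -D_{\mathcal{J}}(x)$, i.e. $x \in \crit D_{\mathcal{J}}$. Equivalently, if $x \notin \crit D_{\mathcal{J}}$ then $\mathcal{J}(w(t)) < \mathcal{J}(x)$ for all $t > 0$; since this holds for every solution issued from $x$, it holds for every $y \in \Phi_t(x)$, which is precisely the strict Lyapunov inequality.

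I do not expect a genuine obstacle once the dissipation identity is secured; the only point demanding slight care is the equality case in the strict-decrease step, where one must pass from $\dot{w} = 0$ almost everywhere back to the criticality $0 \in D_{\mathcal{J}}(x)$. This is handled cleanly by observing that $w$ is then constantly equal to $x$, so that the almost-everywhere inclusion collapses to $0 \in -D_{\mathcal{J}}(x)$ at any admissible $s$.
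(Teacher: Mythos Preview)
Your proof is correct and shares the paper's overall structure: both derive the energy-dissipation identity $\mathcal{J}(w(t)) - \mathcal{J}(x) = -\int_0^t \|\dot{w}(s)\|^2\,\di s$ from the chain rule of \Cref{def:conservativeField} applied along the solution, exactly as in \Cref{rem:solutionFlow}.

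The only genuine difference is in the strict-decrease step for $x \notin \crit D_{\mathcal{J}}$. The paper argues directly: by upper semicontinuity of $D_{\mathcal{J}}$ (a consequence of graph closedness and local boundedness), there exist $\epsilon,\delta>0$ with $\|v\|\geq\epsilon$ for every $v\in D_{\mathcal{J}}(y)$ whenever $\|y-x\|\leq\delta$; continuity of $w$ then forces $\|\dot{w}(s)\|\geq\epsilon$ on a short initial interval, so the integral is strictly positive. Your contrapositive route avoids any explicit appeal to upper semicontinuity: from equality you deduce $\dot{w}=0$ a.e., hence $w\equiv x$, and the inclusion at any admissible $s$ gives $0\in D_{\mathcal{J}}(x)$. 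Both are standard; the paper's version additionally yields a quantitative lower bound on the decrease (namely $\epsilon^2 t_0$), while yours is slightly more self-contained.
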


\begin{proof}
Let $x\in \R^p$, $t \geq 0$ and $y \in \Phi_t(x)$. By definition of $\Phi_t$, there exists $w : \R_+ \rightarrow \R^p$ a solution to the differential inclusion $\dot{w} \in - D_{\mathcal{J}}(w)$ with initial value $w(0) = x \in \R^p$ such that $y = w(t)$. By definition of a conservative gradient and since $w$ is absolutely continuous, we have:
\begin{equation}
    \label{eq:lyapunovproof}
    \mathcal{J}(w(t)) - \mathcal{J}(w(0)) = \int_{0}^t \langle D_{\mathcal{J}}(w(u)), \dot{w}(u) \rangle \di u.
\end{equation}
Since $w$ is a solution of (1), $\dot{w}(u) \in - D_{\mathcal{J}}(w(u))$ for almost all $u \in \left[ 0, t\right]$ and  we have $\mathcal{J}(w(t)) - \mathcal{J}(w(0)) = - \int_{0}^t \| \dot{w}(u) \|^2 \di u$, hence $\mathcal{J}(w(t)) = \mathcal{J}(y) \leq \mathcal{J}(x)$. 

Now we suppose that $x \in \R^p \setminus \operatorname{crit} D_{\mathcal{J}}$ and $t > 0$. By upper semicontinuity of $D_{\mathcal{J}}$, $\exists \epsilon > 0, \exists \delta > 0, \forall y \in \R^p \text{ such that } \| y - x\| \leq \delta,$ we have $ \forall v \in D_{\mathcal{J}}(y), \| v \| \geq \epsilon$. By continuity of $w$, $\exists t_0 > 0, \forall u \in \left[0, t_0\right], \|w(u) - x\| \leq \delta $ hence $\|\dot{w}(u) \| \geq \epsilon$ for almost all $u \in \left[0, t_0\right]$. Thus, by integration, $\int_{0}^t \| \dot{w}(u) \|^2 \di u$ is strictly positive and $\mathcal{J}(y) < \mathcal{J}(x)$.
\end{proof}

\begin{assumption} Let $P$ be a probability measure on $(S, \mathcal{A})$, consider $D : \R^p \times S \rightrightarrows \R^p$ such that for almost all $s \in S$, $f( \cdot, s)$ is locally Lipschitz continuous and $D(\cdot, s)$ is a convex-valued conservative gradient for $f( \cdot, s)$ and:
\begin{enumerate}
    \label{ass:integralconservative}
    \item For all $w \in \R^p$, $f(w, \cdot)$ is integrable with respect to $P$. 
    \item $D : \R^p \times S \rightrightarrows \R^p$ is jointly measurable in $\R^p \times S$.
    \item There exist a  function $\kappa : S \xrightarrow[]{} \R_{+}$ square integrable with respect to $P$, and $q_0 \in \N$ such that for $P$-almost all $s \in S$, for all $w  \in \R^p$,  $\|D(w, s)\| \leq \kappa(s) (1 + \|w\|^{q_0})$, where $\|D(w, s)\|:= \underset{y \in D(w, s)}{\sup} \: \|y\|$.
\end{enumerate}
\end{assumption}
\Cref{ass:integralconservative} parallels assumptions of \Cref{theorem:leibnizconservative}. In particular, \Cref{ass:integralconservative} (3) implies  (4) in \Cref{theorem:leibnizconservative}. In \Cref{ass:integralconservative} (3), the term $(1 + \|w\|^{q_0})$ can be replaced by a locally bounded function $\psi(w)$. For simplicity we assume a polynomial boundedness which also coincides with semialgebraicity of \Cref{ass:convergence_essacc}.

Consider furthermore the following assumption for the algorithmic recursion \eqref{eq:algo_intro}:

\begin{assumption}
\label{ass:stepsizeandsequenceboundedness}
				In addition to \Cref{ass:integralconservative}, the sequence $(\alpha_k)_{k \in \N}$ is strictly positive,  $\sum_{k \in \N} \alpha_k = \infty$ and $\alpha_k \to 0$ as $k \to \infty$. $v$ jointly measurable in $\mathcal{B}(\R^p) \times \mathcal{A}$ is such that for all $w \in \R^p$, for almost all $s \in S$, $v(w,s) \in D(w,s)$. The event $\underset{k \in \N}{\sup} \: \|w_k\|  < \infty$ occurs almost surely.
				\label{ass:algorithmicAssumptionStochApproximation}
\end{assumption}

\begin{lemma}[Noise extinction]
				For the recursion \eqref{eq:algo_intro} under \Cref{ass:algorithmicAssumptionStochApproximation}, set for all $k \in \N$, $a_k = \E [ v(w_k, \xi_k) | w_k]$  and $u_k = v(w_k, \xi_k) - a_k$, then
				\begin{enumerate}
								\item $\sup_{k \in \N} \E[\|u_k\|^2|w_k]$ is finite almost surely.
								\item If $\sum_{k \in \N} \alpha_k^2 < \infty$, then $\sum_{i=0}^k \alpha_i u_i$ converges almost surely as $k \to \infty$.
								\item If $ \phi \colon w \mapsto  \sup_{s \in \operatorname{supp} P} \|v(w,s)\|$ is locally bounded and $\alpha_k = o(1/\log(k))$, then
								\begin{align}
								    \forall T> 0, \qquad \lim_{k \to \infty} \sup_{m\geq k} \left\{ \left\|\sum_{i=k}^m \alpha_{i} u_{i}\right\|,\quad \mathrm{s.t. }\sum_{i=k}^m \alpha_{i} \leq T\right\} = 0\qquad a.s.
								    \label{eq:noiseSummability}
								\end{align}
				\end{enumerate}
				\label{lem:summabilityNoise}
\end{lemma}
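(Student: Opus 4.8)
The plan is to treat $(u_k)_{k\in\N}$ as a martingale difference sequence and to obtain the three claims by increasingly refined moment and concentration estimates, localizing everything on the almost sure event of bounded iterates. Set $\mathcal{F}_k = \sigma(\xi_0,\dots,\xi_k)$, so that $w_{k+1}$ is $\mathcal{F}_k$-measurable and, since $\xi_k$ is independent of $\mathcal{F}_{k-1}$ while $w_k$ is $\mathcal{F}_{k-1}$-measurable, one has $a_k = \E[v(w_k,\xi_k)\mid \mathcal{F}_{k-1}] = \int_S v(w_k,s)\di P(s)$ and $\E[u_k\mid\mathcal{F}_{k-1}]=0$.

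For item (1) I would bound the conditional second moment directly from the growth condition \Cref{ass:integralconservative}~(3). Using $v(w,s)\in D(w,s)$ and the independence of $\xi_k$,
\[
\E[\|u_k\|^2\mid\mathcal{F}_{k-1}] \le \int_S \|v(w_k,s)\|^2\di P(s) \le (1+\|w_k\|^{q_0})^2\int_S \kappa(s)^2\di P(s).
\]
On the almost sure event $\{R:=\sup_k\|w_k\|<\infty\}$ guaranteed by \Cref{ass:algorithmicAssumptionStochApproximation}, the right-hand side is uniformly bounded by $(1+R^{q_0})^2\|\kappa\|_{L^2(P)}^2$, which proves (1). For item (2) I would apply $L^2$-martingale convergence to $M_n=\sum_{i=0}^n\alpha_i u_i$; the obstacle is that the preceding variance bound is random. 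I would remove it by stopping: for $R\in\N$ set $\tau_R=\inf\{k:\|w_k\|>R\}$ and consider the stopped increments $\alpha_i u_i\mathbbm{1}_{\{i\le\tau_R\}}$. Their predictable quadratic variation is bounded by $(1+R^{q_0})^2\|\kappa\|_{L^2(P)}^2\sum_i\alpha_i^2$, which is finite under the hypothesis $\sum_k\alpha_k^2<\infty$, so the stopped martingale is $L^2$-bounded and converges almost surely. Since $\{\sup_k\|w_k\|\le R\}\subset\{\tau_R=\infty\}$ and the stopped process coincides with $M_n$ there, and since $\bigcup_{R\in\N}\{\sup_k\|w_k\|\le R\}$ has full probability, $M_n$ converges almost surely.

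The third item is the delicate one and the main obstacle; it is the asymptotic-pseudo-trajectory noise condition underlying the ODE method. The crucial gain from the hypothesis that $\phi(w)=\sup_{s\in\operatorname{supp}P}\|v(w,s)\|$ is locally bounded is that, after the same localization on $\{\tau_R=\infty\}$, the increments become uniformly bounded: $\|u_k\|\le 2B_R$ with $B_R:=\sup_{\|w\|\le R}\phi(w)<\infty$. I would then fix $T>0$ and $\epsilon>0$ and, for each starting index $k$, apply a maximal Azuma--Hoeffding inequality to the bounded martingale $m\mapsto\sum_{i=k}^m\alpha_i u_i$ over the finite horizon $\{m:\sum_{i=k}^m\alpha_i\le T\}$. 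The total squared-increment budget is $\sum_{i=k}^m(2B_R\alpha_i)^2\le 4B_R^2 T\sup_{i\ge k}\alpha_i$, yielding a tail bound of the form $C_p\exp\!\big(-\epsilon^2/(8B_R^2 T\sup_{i\ge k}\alpha_i)\big)$.

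The step-size hypothesis $\alpha_k=o(1/\log k)$ is exactly what calibrates this estimate: from $\alpha_i\log i\to 0$ one gets $\log k\cdot\sup_{i\ge k}\alpha_i\le\sup_{i\ge k}(\alpha_i\log i)\to0$, so for $k$ large the tail is bounded by $k^{-2}$. Summing over $k$ and invoking Borel--Cantelli shows that the quantity inside \eqref{eq:noiseSummability} (for the stopped process) tends to $0$ almost surely. Letting $\epsilon$ run through a sequence tending to $0$, and then taking the union over $R\in\N$ to undo the localization, establishes \eqref{eq:noiseSummability}. The heart of the difficulty is the interplay between the unbounded state space, handled by the stopping-time localization on $\{\sup_k\|w_k\|<\infty\}$, and the concentration estimate, whose exponential decay must be made to dominate the number of windows precisely through the logarithmic control of the step sizes.
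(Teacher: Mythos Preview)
Your argument is correct and follows the same overall strategy as the paper: a direct moment bound for (1), martingale convergence after localization for (2), and an Azuma--Hoeffding/Borel--Cantelli argument for (3) exploiting that the increments are almost surely bounded on the event $\{\sup_k\|w_k\|<\infty\}$.

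The only noteworthy difference is in the implementation of (3). The paper does not localize by a stopping time; instead it normalizes by the predictable increasing sequence $c_k=2\max\{1,\max_{i\le k}\phi(w_i)\}$ so that $\|u_k/c_k\|\le1$, then invokes \cite[Proposition~4.4]{benaim1999dynamics} (which is exactly the Azuma/Borel--Cantelli statement for bounded-increment martingale differences under the $\alpha_k=o(1/\log k)$ condition) to obtain \eqref{eq:noiseSummability} for the normalized noise $u_k/c_k$, and finally un-normalizes via an Abel-type summation estimate using that $c_k$ converges almost surely. Your stopping-time route avoids this last un-normalization step at the price of redoing the concentration argument inside the proof; both are perfectly valid and the mathematical content is the same. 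One cosmetic point: your truncation indicator should be $\mathbbm{1}_{\{\tau_R>i\}}$ rather than $\mathbbm{1}_{\{i\le\tau_R\}}$, so that $\|w_i\|\le R$ actually holds on the retained set and the variance bound in (2) and the uniform bound $\|u_i\|\le 2B_R$ in (3) are justified.
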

\begin{proof}
	With these notations, (\ref{eq:algo_intro}) writes $w_{k+1} = w_k - \alpha_k(a_k + u_k )$  for all $k \in \N$,
	where we have by joint measurability of $v$, $a_k \in \E_{\xi \sim P}[ D(w_k, \xi)] = D_{\mathcal{J}}(w_k)$.

	As for the first statement, we have for all $k \in \N$,
	\begin{align}
	    \E\left[\| u_k \|^2 \ | \ w_k \right] &= \E_{\xi \sim P} \left[\| v(w_k,\xi) - a_k \|^2 \right] \leq \E_{\xi \sim P} \left[(\|v(w_k, \xi)\|+ \|a_k\|)^2 \right] \nonumber\\ 
	    & \leq 4 \E_{\xi \sim P}\left[\|D(w_k, \xi)\|^2 \right]  \leq 4\E_{\xi \sim P} \left[\kappa(\xi)^2 \right] (1 + \|w_k\|^{q_0})^2,
			\label{eq:supNoiseBounded}
	\end{align}	
	so that $\sup_{k \in \N} \E[\|u_k\|^2|w_k] \leq 4\E_{\xi \sim P} \left[\kappa(\xi)^2 \right] (1 + R^{q_0})^2$ where $\kappa$ given by \Cref{ass:integralconservative} is square integrable and $R := \sup_{k \in \N} \|w_k\|$ is almost surely finite by \Cref{ass:stepsizeandsequenceboundedness}. This proves the first statement.

    Toward a proof of the second statement, let $\epsilon_k := \alpha_k u_k$ and $Z_k := \sum_{i=0}^k \epsilon_i$ for $k \in \N$. We want to prove $Z_k$ converges almost surely as $k \to \infty$. Fix an arbitrary $M > 0$, and define $\epsilon_k^M := \alpha_k u_k \mathbbm{1}_{\left\{ \|w_k\| \leq M\right\}}$ and $Z^M_k := \sum_{i=0}^k \epsilon_i^M$. $(\epsilon_k^M)_{k \in \N}$ is a martingale difference sequence. Indeed, for $k \in \N$, by independence of $\xi_k$ we have $\E \left[u_k \ | \ w_k \right] = \E_{\xi \sim P}[ v(w_k, \xi)] - a_k = 0$, hence $\E \left[\epsilon_k^M |  w_k \right] = 0$ and $(Z_k^M)_{k \in \N}$ is a martingale relatively to the filtration generated by $(\xi_k)_{k \in \N}$. We will apply a martingale convergence theorem \cite[Theorem 4.5.2]{durrett2010probability} on $(Z_k^M)_{k \in \N}$. We have to verify $\E[\|Z_k^M\|^2] < \infty$ and $\sum_{k = 0}^{\infty} \E\left[\| \epsilon_k^M \|^2 \ \middle| \ w_k \right] < \infty$.
    By the inequality (\ref{eq:supNoiseBounded}), one has 
    \begin{align*}
        \E[\|\epsilon_k^M \|^2 | w_k]  & = \alpha^2_k \mathbbm{1}_{\left\{\|w_k\| \leq M \right\}}  \E[\|u_k \|^2 | w_k] \\
        & \leq 4 \alpha^2_k \mathbbm{1}_{\left\{\|w_k\| \leq M \right\}} \E_{\xi \sim P} \left[\kappa(\xi)^2 \right] (1 + \|w_k\|^{q_0})^2 \\
        & \leq 4\alpha^2_k \E_{\xi \sim P} \left[\kappa(\xi)^2 \right]  (1 + M^{q_0})^2,
    \end{align*}
    and taking the expectation gives $\E[\|\epsilon_k^M\|^2] < \infty$, hence $\E [\| Z_k^M \|^2] < \infty$ for all $k \in \N$. We have furthermore 
    $$\sum_{k = 0}^{\infty} \E\left[\| \epsilon_k^M \|^2 \ \middle| \ w_k \right] \leq 4 \E_{\xi \sim P} \left[\kappa(\xi)^2 \right]  (1 + M^{q_0})^2 \sum_{k=0}^\infty \alpha_k^2   < \infty$$ since we assumed $\sum^{\infty}_{k=0} \alpha_k^2 < \infty$ for the second statement. Finally, \cite[Theorem 4.5.2]{durrett2010probability} applies and $\sum_{i=0}^k \epsilon^M_i$  converges almost surely as $k \to \infty$. In particular, 
    \begin{equation}
        \label{eq:truncatedsequence}
        P^{\otimes \N}(\{\sum_{i=0}^\infty \epsilon^M_i \text{converges} \} \cap \{ \sup_{k \in \N} \| w_k\| \leq M \} ) =  P^{\otimes \N}(\sup_{k \in \N} \| w_k\| \leq M).
    \end{equation}
    We can finally prove the almost sure convergence of $Z_k$.
	\begin{align*}
	    P^{\otimes \N}( \{\sum_{i=0}^\infty \epsilon_i \text{ converges} \}) & = P^{\otimes \N}(\{\sum_{i=0}^\infty \epsilon_i \text{ converges} \}\cap\{ \sup_{k \in \N} \| w_k\| < \infty\}) \\
	    & = \lim_{M \to \infty} P^{\otimes \N}(\{\sum_{i=0}^\infty \epsilon_i \text{ converges} \} \cap  \{ \sup_{k \in \N} \| w_k\| \leq M \}) \\
	    & = \lim_{M \to \infty} P^{\otimes \N}(\{\sum_{i=0}^\infty \epsilon_i^M \text{converges} \} \cap \{ \sup_{k \in \N} \|w_k\| \leq M \}) \\
	    & = \lim_{M \to \infty} P^{\otimes \N}(\sup_{k \in \N} \| w_k\| \leq M) \\
	    & = P^{\otimes \N}(\sup_{k \in \N} \|w_k\| < \infty) = 1.
	\end{align*}
The first and the last equalities follow from $\sup_{k \in \N} \|w_k\| < \infty$ in  \Cref{ass:stepsizeandsequenceboundedness}. The fourth equality follows from (\ref{eq:truncatedsequence}). The third equality follows from the fact that for $i \in \N$, $\epsilon^M_i = \epsilon_i$ whenever $\sup_{k \in \N} \|w_k\| \leq M$. The second and fifth equalities are obtained by monotone convergence since the event $\sup_{k \in \N} \|w_k \| \leq M$ is increasing with respect to $M$, and $\bigcup_{M = 0}^\infty \left\{ \sup_{k \in \N} \|w_k \| \leq M \right\} = \left\{\sup_{k \in \N} \|w_k \| < \infty\right\}$.

	For the third statement, let for all $k \in \N$, $c_k = 2\max\left\{1, \max_{i=0,\ldots,k} \phi(w_k)\right\}$, almost surely increasing and convergent, such that $\| u_k / c_k \| \leq 1$ almost surely. Note that $u_k/c_k$ are martingale increments: for $k \in \N$, $u_k/c_k$ is integrable and  $1/c_k$ is measurable with respect to $w_0, \ldots, w_{k}$ hence $\mathbb{E}[u_k / c_k| w_0,\ldots, w_k] = \mathbb{E}[u_k | w_0,\ldots, w_k] / c_k = 0$, see \cite[Theorem 4.1.14]{durrett2010probability}. Fix $c > 0$, we have $2\log(k) = o( c/ \alpha_k)$ so that as $k \to \infty$, $\exp\left(-c/\alpha_k\right)  k^2 = \exp( -c/\alpha_k + 2 \log(k)) \to 0$
    and $\exp(-c/\alpha_k) = o(1/k^2)$ is summable. We invoke \cite[Proposition 4.4]{benaim1999dynamics}:
    \begin{align*}
	    \forall T> 0 \qquad \lim_{k \to \infty} \sup_m \left\{ \left\|\sum_{i=k}^m \alpha_{i} u_{i} / c_i\right\|,\quad \mathrm{s.t. }\sum_{i=k}^m \alpha_{i} \leq T\right\} = 0\qquad a.s.
	\end{align*}
	Note that \cite[Proposition 4.4]{benaim1999dynamics} can be applied to any subgaussian martingale difference sequence, here we apply it to $(u_k/c_k)_{k \in \N}$  uniformly bounded by $1$, hence subgaussian. Fix $T> 0$ and set for all $k$,  $m_k$ the largest integer $m \geq k$ such that $\sum_{i=k}^m \alpha_i \leq T$. 
    We now have for all $k \leq m \leq m_k$ 
    \begin{align*}
	    &\left\|\sum_{i=k}^m \frac{\alpha_{i} u_{i}}{c_i} - \frac{1}{c_k} \sum_{i=k}^m \alpha_{i} u_{i} \right\| = \left\|\sum_{i=k}^m \left(\frac{1}{c_i} - \frac{1}{c_k} \right)\alpha_{i} u_{i} \right\| \\
	    \leq \quad& \left(\frac{1}{c_k} - \frac{1}{c_{m_k}} \right)  \sum_{i=k}^{m} \alpha_i c_{i} \leq \left(\frac{1}{c_k} - \frac{1}{c_{m_k}} \right) c_{m_k} \sum_{i=k}^{m_k} \alpha_i \leq  \left(\frac{1}{c_k} - \frac{1}{c_{m_k}} \right)c_{m_k} T,
	\end{align*}
	and the result follows because $c_k$ converges almost surely.
\end{proof}

As in \cite{benaim}, see also \cite[Section 6]{shikhman}, let us consider a Morse-Sard assumption in order to state the central contribution of this work in terms of stochastic approximation. 
\begin{assumption}[Morse-Sard]{\rm
    \label{ass:morsesard}
    $\mathcal{J}(\operatorname{crit} D_{\mathcal{J}})$ has empty interior.}
\end{assumption}
\begin{theorem}[Convergence of the subgradient sampling method] \label{theorem:subsequentialconvergence} 
				Let \Cref{ass:algorithmicAssumptionStochApproximation} holds, then almost surely all essential accumulation points $\bar{w}$ of $(w_k)_{k \in \N}$ satisfy $0 \in D_{\mathcal{J}}(\bar{w})$.
				If in addition \Cref{ass:morsesard} and \Cref{lem:summabilityNoise} (2) or (3) hold, then, almost surely, $\mathcal{J}(w_k)$ converges as $k \to \infty$ and all accumulation points $\bar{w}$ of $(w_k)_{k \in \N}$ satisfy $0 \in D_{\mathcal{J}}(\bar{w})$. Furthermore, the set of accumulation points is connected.
\end{theorem}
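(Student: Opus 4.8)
The plan is to cast the recursion as a Robbins--Monro-type stochastic approximation of the differential inclusion $\dot w \in -D_{\mathcal J}(w)$ and to read off the asymptotics from the ODE method of \cite{benaim,bianchi2021closed}. First I would record the structural facts that make this machinery applicable. By \Cref{theorem:leibnizconservative} together with \Cref{ass:integralconservative}, the map $D_{\mathcal J}(w)=\E_{\xi\sim P}[D(w,\xi)]$ is a conservative gradient for $\mathcal J$; since $D(\cdot,s)$ is convex valued and Aumann integration preserves convexity, $D_{\mathcal J}$ is in addition convex valued, hence, being graph closed and locally bounded, compact convex valued and upper semicontinuous, with polynomial growth inherited from \Cref{ass:integralconservative}(3). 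Consequently the set-valued flow $\Phi_t$ of $\dot w\in -D_{\mathcal J}(w)$ is well defined on $\R_+$ by \Cref{rem:solutionFlow}, and by \Cref{lemma:lyapunovcondition} the risk $\mathcal J$ is a Lyapunov function for $\crit D_{\mathcal J}$ and this flow.

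Next I would put the iteration in standard form. Writing $a_k=\E[v(w_k,\xi_k)\mid w_k]$ and $u_k=v(w_k,\xi_k)-a_k$ as in \Cref{lem:summabilityNoise}, joint measurability of $v$ and $v(\cdot,s)\in D(\cdot,s)$ give $a_k\in D_{\mathcal J}(w_k)$, so that $w_{k+1}=w_k-\alpha_k(a_k+u_k)$ is exactly a perturbed Euler discretization of the inclusion with mean field $-D_{\mathcal J}$ and martingale noise $(u_k)$. For the first assertion (essential accumulation points) I would invoke the essential accumulation point theory of \cite{bianchi2021closed}: under boundedness of the iterates (\Cref{ass:algorithmicAssumptionStochApproximation}) and the almost sure bound $\sup_k \E[\|u_k\|^2\mid w_k]<\infty$ from \Cref{lem:summabilityNoise}(1), their occupation-measure analysis localizes the dynamics around recurrent points and forces every essential accumulation point $\bar w$ to be an equilibrium of the flow, i.e.\ to satisfy $0\in D_{\mathcal J}(\bar w)$. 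Crucially this part needs neither square-summability of the steps nor the Morse--Sard condition.

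For the second assertion I would upgrade to a genuine asymptotic pseudotrajectory. Under \Cref{lem:summabilityNoise}(2), when $\sum_k\alpha_k^2<\infty$ so that $\sum_i\alpha_iu_i$ converges almost surely, or under \Cref{lem:summabilityNoise}(3), when $\alpha_k=o(1/\log k)$ so that the uniform-over-bounded-horizons estimate \eqref{eq:noiseSummability} holds, the interpolated process built from $(w_k)$ with time steps $(\alpha_k)$ is almost surely a bounded asymptotic pseudotrajectory of $\Phi$ in the sense of Benaim--Hofbauer--Sorin. By their theory its limit set $L$ is a nonempty, compact, connected, internally chain transitive set, invariant under $\Phi$. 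Since $\mathcal J$ is a Lyapunov function for $\crit D_{\mathcal J}$ and \Cref{ass:morsesard} asserts that $\mathcal J(\crit D_{\mathcal J})$ has empty interior, the standard Lyapunov argument for internally chain transitive sets forces $L\subset\crit D_{\mathcal J}$ with $\mathcal J$ constant on $L$. This yields simultaneously that every accumulation point $\bar w$ satisfies $0\in D_{\mathcal J}(\bar w)$, that the accumulation set is connected, and, since the bounded sequence has $\operatorname{dist}(w_k,L)\to0$ while $\mathcal J|_L$ is constant, that $\mathcal J(w_k)$ converges.

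The main obstacle is the pseudotrajectory verification in the set-valued regime: matching the discrete increments to the Filippov flow requires controlling the mean-field term via upper semicontinuity of $D_{\mathcal J}$ together with the vanishing steps, and controlling the noise over each bounded time window, which is precisely what the two regimes of \Cref{lem:summabilityNoise} are tailored to provide. Once the asymptotic pseudotrajectory property and the Lyapunov/Sard pair are in hand, connectedness of the accumulation set and convergence of $\mathcal J(w_k)$ are formal consequences of the Benaim--Hofbauer--Sorin limit-set theorem.
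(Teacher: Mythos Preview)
Your proposal is correct and follows essentially the same route as the paper: cast \eqref{eq:algo_intro} as a stochastic approximation of $\dot w\in -D_{\mathcal J}(w)$, use \Cref{theorem:leibnizconservative} and \Cref{lemma:lyapunovcondition} to get the conservative/Lyapunov structure, invoke \cite{bianchi2021closed} with \Cref{lem:summabilityNoise}(1) for the essential accumulation points, and then the asymptotic pseudotrajectory machinery of \cite{benaim} (via \Cref{lem:summabilityNoise}(2) or (3)) together with \Cref{ass:morsesard} and Proposition~3.27 there for the full limit set. The only cosmetic difference is that the paper obtains connectedness by observing $\|w_{k+1}-w_k\|\to 0$, whereas you read it off directly from the Benaim--Hofbauer--Sorin limit-set theorem; both arguments are valid.
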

\begin{proof}
				The first statement is a consequence of \cite[Corollary 4.9]{bianchi2021closed} with $D_\mathcal{J}$. Indeed, Assumptions 4.1 and 4.3 of \cite[Corollary 4.9]{bianchi2021closed} obviously hold, and Assumption 4.2 is the first point of Lemma \ref{lem:summabilityNoise}. Under \Cref{ass:integralconservative}, \Cref{theorem:leibnizconservative} applies and $D_{\mathcal{J}}$ is a conservative gradient for $\mathcal{J}$. \Cref{lemma:lyapunovcondition} applies and $\mathcal{J}$ is Lyapunov for $\operatorname{crit} D_\mathcal{J}$ and the result follows. Note that it would suffice to merely ensure that $\alpha_k \to 0$ to obtain this result.

				Let us now prove the second statement. In case 2 or 3 
				of \Cref{lem:summabilityNoise}, identity \eqref{eq:noiseSummability} holds which is (i) in \cite[Proposition 1.3]{benaim}, and (ii) also holds by \Cref{ass:stepsizeandsequenceboundedness}. Let $w: \R_+ \rightarrow \R^p$ be the affine interpolation of $(w_k)_{k \in \N}$ (see \cite[Definition IV]{benaim} for its formal construction, this is a central object, also used for example in \cite{benaim2000,faure2011ergodic,Bolte2020LongTD,bianchi2021closed}). 
				\cite[Proposition 1.3]{benaim} ensures that $w$ is a perturbed solution to (\ref{eq:differentialinclusion}) almost surely. By \cite[Theorem 4.2]{benaim} $w$ satisfies \cite[Theorem 4.1 (ii)]{benaim}  for (\ref{eq:differentialinclusion}) which implies by \cite[Theorem 4.3]{benaim} that the limit set of $w$ is internally chain transitive with probability $1$.
Furthermore, $\mathcal{J}$ is a Lyapunov function for $\operatorname{crit} D_{\mathcal{J}}$ and the differential inclusion (\ref{eq:differentialinclusion}). By \Cref{ass:morsesard}, $\mathcal{J}(\operatorname{crit} {D_\mathcal{J}})$ has empty interior.  All the conditions are then satisfied to apply  \cite[Proposition 3.27]{benaim}, hence almost surely the limit set of $w$ is  contained in $\operatorname{crit} {D_\mathcal{J}}$ and $\mathcal{J}$ is constant on the limit set of $w$.  We  remark that the limit set of $w$ is equal to the set of accumulation points of $(w_k)_{k \in \N}$, which gives the desired result.

If \Cref{lem:summabilityNoise} (2) or (3) holds, then $\|w_{k+1} - w_k \| \to 0$ almost surely as $k \to \infty$ hence, in this case, accumulation points of $(w_k)_{k \in \N}$ is a connected subset of $\crit D_{\mathcal{J}}$.
\end{proof} 

\section{On the geometry of stochastic optimization pro\-blems}
\label{section:geometryofstocopt}

\subsection{Definable sets and functions}

\label{subsection:definablesets}

\begin{definition}[o-minimal structure] {\rm 
\label{def:ominimal}
Let $\mathcal{O} = (\mathcal{O}_p)_{p \in \mathbb{N}}$ be a collection of sets such that, for all $p \in \N$, $\mathcal{O}_p$ is a set of subsets of $\R^p$. $\mathcal{O}$ is an o-minimal structure on $(\mathbb{R}, +, \cdot )$ if it satisfies the following axioms, for all $p \in \N$:
    
1.  $\mathcal{O}_p$ is stable by finite intersection, union, complementation, and contains $\R^p$. 

2. If $A \in \mathcal{O}_p$ then $A \times \mathbb{R}$ and $\mathbb{R} \times A$ belong to $\mathcal{O}_{p+1}$.

3. If $A \in \mathcal{O}_{p+1}$ then $\pi(A) \in \mathcal{O}_p$, where $\pi$ projects on the $p$ first coordinates,.

4. $\mathcal{O}_p$ contains 
all sets of the form $\left\{x \in \mathbb{R}^p : P(x) = 0\right\}$, where $P$ is a polynomial. 

5. The elements of $\mathcal{O}_1$ are exactly the finite unions of intervals.
}
\end{definition}

\begin{definition}[Definable set, function and set-valued map]{\rm If $\mathcal{O} = (\mathcal{O}_p)_{p \in \N}$ is an o-minimal structure, a subset $A$ of $\R^n$ with $n \geq 1$ is said to be {\em definable in $\mathcal{O}$} if $A \in \mathcal{O}_n$. A function $f : \R^p \rightarrow \R^q$ and a set-valued map $F : \R^p \rightrightarrows \R^q$  are called definable (in an o-minimal structure) if their graphs are definable, as sets of $\R^{p + q}$.}
\end{definition}

The definition allows in particular to prove definability through the use of  first-order formula (see \cite{coste} for more details).
We now present the main examples of o-minimal structures we use in this paper. The following definitions apply by extension to functions through their graph as above.

The class of semialgebraic sets is the smallest o-minimal structure.
\begin{definition}[Semialgebraic sets] {\rm 
	A subset $A \subset \R^n$ is {\em semialgebraic} if there exist polynomial functions $P_{ij}$ and $Q_{ij}$ with $i = 1, \ldots, l$ and $j =1, \ldots, k$ such that $A =  \bigcup_{i = 1}^l \bigcap_{j=1}^k\: \{x \in \mathbb{R}^n \ | \ P_{ij}(x) < 0, \ Q_{ij}(x) = 0 \}$.}
\end{definition}

\paragraph{Globally subanalytic sets, $\Ran$} An important o-minimal structure containing analytic functions is that of {\em globally subanalytic sets}, denoted $\Ran$. In order to define it, we first recall the definitions of semianalytic and subanalytic sets

\begin{definition}[Semianalytic sets \cite{vandendries1996}]{\rm (i) (Semianalyticity) A subset $A$ of $\R^n$ is semianalytic if for any point $x \in \R^d$, there exists a neighborhood $U$ of $x$ such that $U \cap A$ has the form $\bigcup_{i = 1}^l \bigcap_{j=1}^k\: \{x \in \mathbb{R}^n \ | \ g_{ij}(x) < 0, \ h_{ij}(x) = 0$, where the $g_{ij}$ and $h_{ij}$ are real analytic.
(ii) (Subanalyticity) A subset $A$ of $\R^n$ is {\em subanalytic} if there exists $m \in \N$ such that $A$ is  the projection of a semianalytic set $M \subset \R^{n + m} $ on $\R^n$.}
\end{definition}

\begin{definition}[Globally subanalytic sets]
    $A \subset \R^n$ is globally subanalytic if $\tau_n(A)$ is subanalytic, where $\tau_n : x \mapsto \left( \frac{x_1}{\sqrt{1 + x_1^2}}, \ldots, \frac{x_n}{\sqrt{1 + x_n^2}} \right)$.
\end{definition}
\paragraph{O-minimal structure $\Ranexp$} The exponential function, yet widely used in machine learning, is not globally subanalytic. It is however definable in an o-minimal structure called $\Ranexp$ which also contains $\Ran$, see \cite{Dries1995OnTR}.

We now recall important properties of definable sets and functions.
\begin{proposition}[Definable choice \cite{coste}] \label{prop:definablechoice} Let $A \subset \R^p \times \R^q$ a definable set. Denote $\mathcal{P}_p$ the projection on the $p$ first coordinates. Then there exists a definable function $h : \mathcal{P}_pA \rightarrow \R^q$ such that for all $x \in \mathcal{P}_pA$, $(x, h(x)) \in A$.
\end{proposition}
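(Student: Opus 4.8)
The plan is to prove this by induction on the target dimension $q$, reducing everything to the one-dimensional selection problem, where axiom 5 of \Cref{def:ominimal} carries the essential content. The base case $q=1$ is where the real work lies; the inductive step is a routine "peel off one coordinate" argument.

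For the base case $q=1$, fix $x \in \mathcal{P}_p A$ and consider the fiber $A_x := \{y \in \R \mid (x,y) \in A\}$, a nonempty definable subset of $\R$, hence by axiom 5 a finite union of points and open intervals. The idea is to select a canonical point of $A_x$ by a single first-order recipe, for instance: if $A_x$ has a least element, set $h(x) = \min A_x$; otherwise let $(a(x), b(x))$ be the leftmost open-interval component of $A_x$ (whose left endpoint is $\inf A_x$) and set $h(x)$ to its midpoint when bounded, to $b(x)-1$ when $a(x) = -\infty$ and $b(x)$ finite, to $a(x)+1$ when $a(x)$ finite and $b(x) = +\infty$, and to $0$ when $A_x = \R$. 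Each defining condition ("$A_x$ has a minimum", "$\inf A_x$ is finite", "the first component is bounded") and each auxiliary quantity ($a(x)$, $b(x)$) is expressible by a first-order formula over $A$ with quantifiers ranging over $\R$; by the closure axioms 1--4 and the projection axiom 3, the corresponding subsets of $\mathcal{P}_p A$ and the graph of $h$ are definable, so $h$ is a definable selection.

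For the inductive step, assume the statement for $q$ and take $A \subset \R^p \times \R^{q+1}$. Let $\pi$ forget the last coordinate and put $B = \pi(A) \subset \R^p \times \R^q$, which is definable by axiom 3, with $\mathcal{P}_p B = \mathcal{P}_p A =: X$. The induction hypothesis yields a definable $g : X \to \R^q$ with $(x, g(x)) \in B$ for all $x \in X$. Form the definable set $C = \{(x,t) \in \R^p \times \R \mid (x, g(x), t) \in A\}$, definable as the preimage of $A$ under the definable map $(x,t) \mapsto (x, g(x), t)$; its fiber $C_x$ is nonempty precisely because $(x, g(x)) \in B = \pi(A)$. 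Applying the base case to $C$ gives a definable $\phi : X \to \R$ with $(x, g(x), \phi(x)) \in A$, and $h(x) := (g(x), \phi(x))$ is the required definable selection.

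The main obstacle is making the one-dimensional recipe genuinely definable and, crucially, uniform over the base: one must verify that the "leftmost component" and its endpoints vary definably with $x$ and that only finitely many configurations occur across all $x$. This is exactly where the finiteness content of o-minimality enters, through the uniform bound on the number of connected components of the fibers $A_x$ (a consequence of cell decomposition, itself derivable from the axioms), which lets one partition $X$ into finitely many definable pieces on which a single first-order formula defines $h$. Everything else is bookkeeping with first-order formulas, which by the remark following \Cref{def:ominimal} preserves definability.
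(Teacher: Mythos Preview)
The paper does not supply its own proof of this proposition; it is stated with a citation to \cite{coste} and used as a black box. Your induction on $q$ together with the canonical one-dimensional selection is precisely the standard argument given in that reference (and in van den Dries), so your approach matches the cited source.

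One comment on your closing paragraph: you overstate the difficulty. The recipe you wrote down---take $\min A_x$ if it exists; otherwise form the leftmost interval via $a(x)=\inf A_x$ and $b(x)=\sup\{t: (a(x),t)\subset A_x\}$ and do the four-way case split on boundedness---is already \emph{uniformly} definable by direct first-order formulas, with no appeal to cell decomposition or to a uniform bound on the number of components of the fibers. Each condition (``$A_x$ has a minimum'', ``$A_x$ is unbounded below'') and each auxiliary quantity ($a(x)$, $b(x)$) is given by a single first-order formula in the parameter $x$, so the graph of $h$ is a finite Boolean combination of projections of definable sets. Axiom~5 is invoked only pointwise, to guarantee that for each fixed $x$ the fiber $A_x$ is a finite union of points and intervals, so that the leftmost interval exists and $b(x)$ is well defined; no uniformity across fibers is needed. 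Your argument is correct, just slightly heavier than necessary at that last step.
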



\begin{definition}[$C^r$-stratification \cite{vandendries1996}] \label{def:stratification}{\rm 
Given $A \subset \R^p$ definable set,  $(M_i)_{i = 1, \ldots, n}$ is a {\em $C^r$-stratification} if $(M_i)_{i = 1, \ldots, n}$ is a partition of $A$, and for all $i,j$ in $\{1, \ldots, r\}$, $\overline{M_i} \cap M_j \neq \emptyset$ implies that $M_j$ is included in the boundary of $M_i$.}
\end{definition}

Definable sets admits a $C^r$-stratification for all $r \in \N$. Take for instance the graph of the absolute value  $| \cdot |$, which is a semialgebraic set. It admits as a stratification $\{M_1, M_2, M_3\}$ where $M_1 = \{(x, -x) \ | \ x \in ]- \infty, 0[\}$, $M_2 = \{(x, x) \ | \ x \in ]0, +\infty[\}$, and $M_3 = \{(0,0)\}$. This stratification property has several useful consequences for our work. For a definable function $F : \R^p \rightarrow \R$, one  may consider a stratification $(M'_i)_{i = 1, \ldots n'}$ of $\R^p$ in which for all $i = 1, \ldots, n'$, the restriction of $F$ to $M'_i$ is $C^r$. Taking the  union of the manifolds $M_i'$ of dimension $p$ gives a  dense open subset in $\R^p$ on which $F$ is $C^r$. We use this property with $r  = 2$, for instance in the proof of \Cref{claim:preimageImproved}. The stratification property also implies that definable dense sets have full measure. This is a property we repeatedly use in  \cref{subsection:artifactsavoidance}.

\subsection{Definability and set-valued integration}
\label{subsection:definableintegration}
The following result is a set-valued version of  \cite[Theorem 1.3]{Cluckers_2011}.

\begin{theorem}[Definable set-valued integrals] \label{proposition:definabilityofsetvaluedintegral} Let $\phi$ a globally subanalytic density function on $\R^m$, $D : \R^p \times \R^m \rightrightarrows \R^p$ globally subanalytic, graph closed, locally bounded and convex valued. Assume $D_\mathcal{J} : w \mapsto \int_{\R^m} D(w, s) \phi(s) \di s $ is well defined, then $D_{\mathcal{J}}$ is definable in $\Ranexp$.
\end{theorem}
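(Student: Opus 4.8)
The plan is to reduce this set-valued statement to the scalar integration theorem of Cluckers and Miller \cite[Theorem 1.3]{Cluckers_2011} by passing to support functions. Since $D(\cdot,\cdot)$ is convex valued and the Aumann integral of a measurable, integrably bounded, convex-valued map is convex and compact, the set $D_{\mathcal{J}}(w)$ is a closed convex compact set for each $w$, hence entirely determined by its support function $\sigma_{D_{\mathcal{J}}(w)}(d) = \sup_{y \in D_{\mathcal{J}}(w)}\DPR{d}{y}$. The key classical fact I would invoke (in the spirit of \cite[Theorem 2]{shapiro}) is the commutation of support function and Aumann integral:
\[
\sigma_{D_{\mathcal{J}}(w)}(d) = \int_{\R^m} \sigma_{D(w,s)}(d)\,\phi(s)\,\di s =: h(w,d),
\]
valid here because $D(\cdot,s)$ is compact convex valued, measurable, and integrably bounded, the local boundedness together with well-definedness of $D_{\mathcal{J}}$ yielding the required integrability on compact sets of $w$.

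Next I would prove that $h$ is definable in $\Ranexp$. First, the map $(w,s,d)\mapsto \sigma_{D(w,s)}(d) = \sup\{\DPR{d}{y} : y \in D(w,s)\}$ is globally subanalytic: the predicate $y \in D(w,s)$ is globally subanalytic since $\Graph D$ is, the inner product is polynomial, and the supremum over the locally bounded set $D(w,s)$ (so that the sup is finite and attained) is expressible by a first-order formula, hence o-minimality of $\Ran$ preserves global subanalyticity. Multiplying by the globally subanalytic density $\phi(s)$ keeps the integrand globally subanalytic in $(w,s,d)$. Integrating out the variable $s$, Cluckers--Miller \cite[Theorem 1.3]{Cluckers_2011} ensures that the parametrized integral $h(w,d)$ is a constructible function of $(w,d)$, in particular definable in $\Ranexp$.

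Finally I would reconstruct the graph of $D_{\mathcal{J}}$ from $h$. Since $D_{\mathcal{J}}(w)$ is closed and convex, it is the intersection of its supporting half-spaces, so
\[
\Graph D_{\mathcal{J}} = \left\{(w,y)\in \R^p\times\R^p \ \middle|\ \forall d \in \R^p,\ \DPR{d}{y} \le h(w,d)\right\}.
\]
As $h$ is definable in $\Ranexp$ and definability is preserved under universal quantification over $d$, this graph is definable in $\Ranexp$, which is precisely the claim.

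The main obstacle is the second paragraph: carefully justifying that $\sigma_{D(w,s)}(d)$ is globally subanalytic and, above all, that Cluckers--Miller (stated for scalar integrands) applies verbatim to it after integrating only the variable $s$ while keeping $(w,d)$ as parameters. A secondary point to verify is the support-function/Aumann-integral identity above together with the integrability needed to make $h$ finite; I would derive the latter from local boundedness of $D$ on compact sets of $w$ and the standing assumption that $D_{\mathcal{J}}$ is well defined.
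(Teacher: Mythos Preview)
Your proposal is correct and follows essentially the same route as the paper: both pass to the support function $H(w,q,s)=\max_{v\in D(w,s)}\langle v,q\rangle$, argue it is globally subanalytic by a first-order formula, apply Cluckers--Miller to obtain definability of $(w,q)\mapsto\int H(w,q,s)\phi(s)\,\di s$ in $\Ranexp$, and then recover $\Graph D_{\mathcal J}$ by convex duality. The only cosmetic differences are that the paper writes the duality as $\{(w,z):\sup_q \langle q,z\rangle-h_{D_{\mathcal J}(w)}(q)=0\}$ rather than your half-space intersection, and it justifies the support-function/Aumann-integral identity directly via a measurable selection achieving the pointwise maximum (\cite[Theorem 18.19]{infinite}) rather than invoking \cite{shapiro}.
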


\begin{proof}
Let $D$ be as in the theorem. We will apply \cite[Theorem 1.3]{Cluckers_2011}  to prove the definability of $D_\mathcal{J}$ in $\Ranexp$. $D$ is definable in $\Ran$ hence the set-valued map $(w, q, s) \mapsto \langle D(w,s), q \rangle$ is definable in $\Ran$ as well. Let $G$ be its graph. The function $H : (w, q, s)
    \mapsto  \max_{v \in D(w, s)} \langle v, q\rangle$
 is clearly definable in $\Ran$ as its graph writes 
$$\operatorname{Graph} H = \left\{(w,q,s,y) \in G \ | \ \forall (w',q', s', y') \in G, (w',q', s') = (w, q , s) \implies y \geq y' \right\}.$$

By definition of the Aumann integral, we have  for all $w \in \R^p$
$$ D_\mathcal{J}(w)  = \left\{\int_{\R^m} g(s) \phi(s) \di s \ \middle| \ g \text{ is a measurable selection of } D(w, \cdot) \right\},$$

For $C \subset \R^p$ compact convex, define the support function $h_C : q \mapsto \max_{v \in C} \langle v, q \rangle$. Then by linearity of the integral, for $(w, q) \in \R^p \times \R^p$, it holds that
\begin{align*}
h_{D_\mathcal{J}(w)}(q) &= \underset{v \in D_\mathcal{J}(w)}{\max}  \langle v, q \rangle \\ &= \max \left\{\int_{\R^m} \langle g(s), q \rangle  \phi(s) \di s \ \middle | \ g \text{ is a measurable selection of } D(w, \cdot) \right\}.
\end{align*}

By \cite[Theorem 18.19]{infinite}, there exists a measurable selection $\Tilde{g} : \R^m \rightarrow \R^p$ of $D(w, \cdot)$ such that $\forall s \in \R^m, \langle \Tilde{g}(s), q \rangle = \max_{v \in D(w, s)} \langle v, q\rangle = H(w,q,s)$, and thus, $\Tilde{g}$ achieves the maximum $h_{D_\mathcal{J}(w)}(q)$, i.e., $h_{D_\mathcal{J}(w)}(q) = \int_{\R^m} H(w,q,s) \phi(s) \di s$. Furthermore, by duality, for all convex compact set $C \subset \R^p$ it holds that $C = \{z \in \R^p \ | \ \sup_{v \in \R^p} \langle v, z \rangle - h_C(v) = 0\}$. Applying this property with $C = D_{\mathcal{J}}(w)$ for each $w \in \R^p$ gives
\begin{align}
    \operatorname{Graph} D_\mathcal{J} &= \left\{(w, z) \in \R^p \times \R^p \ \middle| \ z \in D_\mathcal{J}(w) \right\} \nonumber \\
    &= \left\{(w, z) \in \R^p \times \R^p \ \middle| \ \underset{q \in \R^p}{\sup} \langle q, z \rangle - h_{D_\mathcal{J}(w)}(q) = 0 \right\} \nonumber \\
    &= \left\{(w, z) \in \R^p \times \R^p \ \middle| \ \underset{q \in \R^p}{\sup} \langle q, z \rangle -  \int_{\R^m} H(w,q,s) \phi(s) \di s = 0 \right\}.
    \label{eq:graphDJ}
\end{align}

By \cite[Theorem 1.3]{Cluckers_2011}, $(w, q) \in \R^p \times \R^p \mapsto \int_{\R^m} H(w,q,s) \phi(s) \di s$ is definable in $\Ranexp$,  hence by equality (\ref{eq:graphDJ}) $D_{\mathcal{J}}$ is also definable in $\Ranexp$. 
\end{proof}

\subsection{Consequences in stochastic optimization}
\label{subsection:artifactsavoidance}

\paragraph{Preliminary results}
Before providing our stochastic results, let us establish some technical lemmas. For a definable set $L \subset \R^p \times \R^m$, we define for all $(w,s) \in \R^p \times \R^m$, $L_w := \left\{s \in \R^m \ | \ (w,s) \in L \right\}$ and $L_s := \left\{w \in \R^p \ | \ (w,s) \in L \right\}$.
\begin{lemma}
    \label{lem:fubiniDefinable}
    Let $(r, q) \in \N^* \times \N^*$, and $L \subset \R^r \times \R^q$ be a definable set. Then $L$ is dense if and only if there is a dense definable set $Z \subset \R^r$ such that for all $w \in Z$, $L_w$ is dense in $\R^q$.
\end{lemma}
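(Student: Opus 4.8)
The plan is to treat the two implications separately, the reverse one being elementary topology and the forward one relying on o-minimal dimension theory.

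For the implication $(\Leftarrow)$, I would argue directly and without using definability. Suppose a dense definable $Z$ as described exists, and let $U \times V$ be any nonempty basic open box with $U \subset \R^r$ and $V \subset \R^q$ open. By density of $Z$ there is $w \in Z \cap U$, and then, since $L_w$ is dense in $\R^q$, there is $s \in L_w \cap V$; the point $(w,s)$ then lies in $L \cap (U \times V)$. As such boxes form a basis of the product topology, $L$ is dense.

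For the implication $(\Rightarrow)$, the first step is to record the dictionary between density and dimension for \emph{definable} sets: a definable $A \subset \R^n$ is dense if and only if $\dim(\R^n \setminus A) < n$, since a definable set has empty interior exactly when its dimension is strictly less than $n$. Hence $L$ dense means $\dim C < r+q$, where $C := (\R^r \times \R^q) \setminus L$ is definable. I would then set
$$Z := \{ w \in \R^r \mid L_w \text{ is dense in } \R^q \} = \{ w \in \R^r \mid \dim C_w < q \},$$
where $C_w = \R^q \setminus L_w$; this set is definable, either via the first-order formula expressing density of the fiber $L_w$, or as the complement of the definable set appearing below.

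It then remains to show $Z$ is dense, i.e. $\dim(\R^r \setminus Z) < r$. Since $\dim C_w \leq q$ for every $w$, one has $\R^r \setminus Z = \{ w \mid \dim C_w = q \}$. The crux is the o-minimal fiber-dimension theorem (see e.g. \cite{coste}): for the definable set $C$ and each integer $d$, the set $C(d) := \{ w \mid \dim C_w = d \}$ is definable and satisfies $\dim C(d) + d \leq \dim C$. Applying this with $d = q$ yields $\dim C(q) \leq \dim C - q < (r+q) - q = r$, that is $\dim(\R^r \setminus Z) < r$, so $Z$ is dense by the dictionary above, finishing the proof. The genuine obstacle — and the only place o-minimality is essential — is precisely this fiber-dimension inequality; the density-dimension equivalence and the two reductions around it are routine bookkeeping.
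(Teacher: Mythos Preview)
Your proof is correct. The reverse implication is essentially identical to the paper's (open boxes versus $\epsilon$-balls, the same elementary density argument).

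For the forward implication, however, you take a genuinely different route. The paper defines the same set $Z$ of $w$ with dense fiber $L_w$, assumes for contradiction that $Z^c$ contains a nonempty open $U$, and then uses \emph{definable choice} to select, for each $w \in U$, a center $z(w)$ and radius $\epsilon(w)$ witnessing non-density of $L_w$; a stratification step makes $z$ and $\epsilon$ continuous on a smaller open set, so the resulting tube $\{(w,v):\|v-z(w)\|<\epsilon(w)\}$ is an explicit open subset of $L^c$, contradicting density of $L$. Your argument instead works entirely through o-minimal dimension theory: translating ``dense'' into ``complement has strictly smaller dimension'' and invoking the fiber-dimension formula $\dim C(d)+d\le \dim C$ with $d=q$ to bound $\dim(\R^r\setminus Z)$.

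What each approach buys: the paper's proof is self-contained modulo definable choice and stratification (both already recalled in the paper), and it produces a concrete open set in $L^c$; your proof is shorter and more conceptual, reducing the statement to a single standard inequality from the o-minimal dimension toolbox. Both the definable-choice route and the fiber-dimension route are equally valid, and your identification of the fiber-dimension theorem as the only essential input is accurate.
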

\begin{proof}
		Let us start with the direct implication.
    Set 
    $Z = \{w \in \R^r \ | \ \forall z \in \R^q, \forall \epsilon > 0, \exists s \in \R^q, (w,s) \in L, \|s - z\| < \epsilon\}.$
    This set is definable and is precisely the set of $w$ such that $L_w$ is dense in $\R^q$. Assume that $Z^c$ has a nonempty interior. This means that there is a nonempty open set $U \subset \R^r$, such that for all $w \in U$
     $$\exists z \in \R^q, \exists \epsilon > 0, \forall s \in \R^q, (w,s) \in L \Rightarrow \|s - z\| \geq \epsilon.$$
     By definable choice, see \Cref{prop:definablechoice}, there are definable functions $z \colon U \to \R^q$ and $\epsilon \colon U \to \R^*_+$, such that for all $w \in U$, we have $ \{(w,v)\in U \times \R^q \ | \ \|v - z(w)\| < \epsilon(w) \} \subset L^c$.
     By stratification, see \Cref{def:stratification}, reducing $U$ if needed, $z$ and $\epsilon$ can be chosen continuous hence $L^c$ has nonempty interior which contradicts the density of $L$. 

		 As for the reverse implication, fix any $(\bar{w},\bar{s}) \in  \R^r \times \R^q$ and $\epsilon >0$. Since $Z$ is dense there is $w \in Z$ such that $\|w - \bar{w}\| < \epsilon/\sqrt{2}$. Since $L_w$ is dense, there is $s \in L_w$ such that $\|s - \bar{s}\| < \epsilon/\sqrt{2}$. Overall, we have $(w,s) \in L$ such that $\|(w,s) - (\bar{w},\bar{s})\| < \epsilon$ which shows that $L$ is dense as $(\bar{w},\bar{s}) \in  \R^r \times \R^q$ and $\epsilon >0$ were arbitrary. 
\end{proof}

\begin{claim}
\label{claim:preimageImproved}
Let $g\colon  \R^p \times \R^m \rightarrow \R^p$ be a definable function. Then there exists a definable dense open set $L \subset \R^p \times \R^m$, a subset $\Gamma \subset \R$ which complement is finite as well as a definable dense set $\Delta \subset \Gamma \times \R^m$, such that $g$ is $C^2$ on $L$, for every $\alpha \in \Gamma$, the definable set $\{s \in \R^m \ | \ (\alpha,s) \in \Delta\}$ is dense open in $\R^m$ and for all $(\alpha,s) \in \Delta$, $L_s$ is dense and open. Furthermore, denoting  $\Phi_{\alpha, s} =  \operatorname{Id} - \alpha \nabla_w g( \cdot, s)$ from  $L_s$ dense open to $\R^p$, we have
\begin{equation*}
     \forall Z \subset \R^p \text{ definable},\ \dim Z \leq p-1 \Rightarrow \dim \Phi_{\alpha, s}^{-1}(Z) \leq p-1.
\end{equation*}
\end{claim}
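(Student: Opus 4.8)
The plan is to produce the good locus $L$ together with a Hessian field, then run a fibered dimension count in the joint variable $(\alpha,s)$ to carve out $\Gamma$ and $\Delta$, and finally deduce the preimage bound from the inverse function theorem away from the singular locus. First I would invoke the stratification property (\Cref{def:stratification} with $r=2$, applied to $g$ coordinatewise) to get a definable dense open set $L\subset\R^p\times\R^m$ on which $g$ is $C^2$; set $H(w,s):=\nabla^2_w g(w,s)$, a definable continuous $\R^{p\times p}$-valued map on $L$. Applying \Cref{lem:fubiniDefinable} to $L$, with the $\R^m$ factor as base, yields a dense definable set $S_0:=\{s:L_s\ \text{dense}\}$; since $L$ is open every slice $L_s$ is open, so $L_s$ is dense open for $s\in S_0$, and $S_0^c$ is definable with empty interior, hence $\dim S_0^c\le m-1$.

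The core of the argument is a dimension count. Consider $\psi(\alpha,w,s):=\det(I-\alpha H(w,s))$ on $\R\times L$ and the definable set $\mathcal Z:=\{(\alpha,w,s):(w,s)\in L,\ \psi(\alpha,w,s)=0\}$. For fixed $(w,s)$ the map $\alpha\mapsto\psi(\alpha,w,s)$ is a polynomial with constant term $\psi(0,w,s)=1$, hence not identically zero, so it has at most $p$ roots: the projection $\mathcal Z\to L$ forgetting $\alpha$ has finite fibers, whence $\dim\mathcal Z\le\dim L\le p+m$. Writing $C_{\alpha,s}:=\{w\in L_s:\psi(\alpha,w,s)=0\}$ for the fiber of $\mathcal Z$ over $(\alpha,s)$, which is exactly the singular locus of $\Phi_{\alpha,s}$, set $\mathcal B:=\{(\alpha,s):\dim C_{\alpha,s}=p\}$; this is definable because fiber dimension is definable. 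Over $\mathcal B$ all fibers have dimension $p$, so $\dim\mathcal B+p\le\dim\mathcal Z\le p+m$, i.e. $\dim\mathcal B\le m$. Consequently $A:=\{\alpha:\dim\mathcal B_\alpha=m\}$ is finite: were it infinite it would contain an interval $J$, and then $\mathcal B\cap(J\times\R^m)$ would have dimension $1+m$, forcing $\dim\mathcal B\ge 1+m$ and contradicting $\dim\mathcal B\le m$. I set $\Gamma:=\R\setminus A$, which is cofinite.

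Next I assemble $\Delta$. For $\alpha\in\Gamma$ we have $\dim\mathcal B_\alpha\le m-1$, so $S_0^c\cup\mathcal B_\alpha$ is definable of dimension $\le m-1$ and its closure is nowhere dense; I set $\Delta_\alpha:=\R^m\setminus\overline{S_0^c\cup\mathcal B_\alpha}$, which is definable, open and dense, and satisfies $\Delta_\alpha\subset S_0\cap(\mathcal B_\alpha)^c$. Collecting these, $\Delta:=\{(\alpha,s):\alpha\in\Gamma,\ s\in\Delta_\alpha\}$ is definable, since the fiberwise closure of the definable family $(S_0^c\cup\mathcal B_\alpha)_\alpha$ is definable; its $\alpha$-slices are dense open in $\R^m$; and by the converse implication of \Cref{lem:fubiniDefinable}, since $\Gamma$ is dense and each slice is dense, $\Delta$ is dense. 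By construction $L_s$ is dense open and $\dim C_{\alpha,s}\le p-1$ for every $(\alpha,s)\in\Delta$.

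Finally I would prove the preimage bound. Fix $(\alpha,s)\in\Delta$ and a definable $Z$ with $\dim Z\le p-1$, and split $L_s=R\sqcup C_{\alpha,s}$ with $R:=\{w\in L_s:\det(I-\alpha H(w,s))\ne0\}$. Since $\Phi_{\alpha,s}^{-1}(Z)\cap C_{\alpha,s}\subset C_{\alpha,s}$, this part has dimension $\le p-1$. On $R$ the map $\Phi_{\alpha,s}$ is $C^1$ with everywhere invertible Jacobian $I-\alpha H$, hence a local diffeomorphism by the inverse function theorem; near each point it identifies $\Phi_{\alpha,s}^{-1}(Z)$ with a subset of $Z$, so by the local and diffeomorphism-invariant character of o-minimal dimension, $\dim(\Phi_{\alpha,s}^{-1}(Z)\cap R)\le\dim Z\le p-1$, and taking the maximum gives $\dim\Phi_{\alpha,s}^{-1}(Z)\le p-1$. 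The main obstacle is the pair of dimension-counting steps in the second paragraph — establishing finiteness of the $\alpha$-fibers of $\mathcal Z$ and correctly applying the o-minimal fiber-dimension theorem to pass from $\dim\mathcal B\le m$ to the finiteness of $A$ — together with verifying that $(\Delta_\alpha)_\alpha$ genuinely assembles into a definable $\Delta$ whose slices are simultaneously open and dense; the final local-diffeomorphism step is routine once o-minimal dimension theory is in hand.
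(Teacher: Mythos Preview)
Your argument is correct and follows a genuinely different route from the paper's. The paper works eigenvalue by eigenvalue: it chooses a definable representation $\lambda_1,\ldots,\lambda_p$ of the eigenvalues of $\nabla^2_w g$, refines $L$ so that the $\lambda_i$ are $C^1$, and applies the definable Sard theorem to each $\lambda_i$ to produce $\Gamma$ (those $\alpha$ with $\alpha^{-1}$ a regular value of every $\lambda_i$). It then uses definable choice and a chain-rule argument to show that, for fixed $\alpha\in\Gamma$, the set of $s$ for which some $\lambda_i(\cdot,s)$ has a $w$-critical point on the level $\alpha^{-1}$ has empty interior; this is how $\Delta$ is obtained. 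The payoff is that for $(\alpha,s)\in\Delta$ the singular locus $K_{\alpha,s}=\bigcup_i\{\lambda_i(\cdot,s)=\alpha^{-1}\}$ is a finite union of $(p{-}1)$-dimensional submanifolds.

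You bypass Sard, the eigenvalue functions, and definable choice entirely. Working directly with $\psi(\alpha,w,s)=\det(I-\alpha\nabla^2_w g(w,s))$, you use $\psi(0,\cdot,\cdot)\equiv 1$ to bound the $\alpha$-fibers of $\mathcal Z=\{\psi=0\}$ over $L$ by $p$ points, and then two applications of the o-minimal fiber-dimension theorem produce first $\dim\mathcal B\le m$ and then the finiteness of $A$. This yields only $\dim C_{\alpha,s}\le p-1$ rather than a manifold structure on the singular set, but that is precisely what the final local-diffeomorphism step needs, and both proofs close with the same argument there. Your approach is more elementary---it uses only $C^2$ smoothness of $g$ on $L$ and basic dimension theory---while the paper's gives slightly finer geometric information on $K_{\alpha,s}$ that is not exploited downstream.
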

\begin{proof}
Denote by $L$ a definable dense open set such that $g$ is $C^2$ on $L$ (such sets exist by stratification, see \Cref{def:stratification}). Let $\lambda : L \subset \R^p \times \R^m \rightarrow \R^p$ be a definable representation of the eigenvalues of $\nabla_w^2 g$, where $\nabla^2_w g$ denotes the partial Hessian of $g$ with respect to the variable $w$. Refine $L$ so that $\lambda$ is jointly differentiable in $(w,s)$.   $L$ is open and dense by definability of $g$.  We further set 
$S_0 \subset \R^m$
the definable dense set obtained from \Cref{lem:fubiniDefinable} such that for all $s \in S_0$ the set $L_s$ is open dense in $\R^p$.

Let $F$ be the complement of the critical values of the function $\lambda_i$, for $i = 1,\ldots, p$ on $L$. The set of critical values $F^c$ is finite by the definable Sard's theorem \cite{bolte2007clarke}. Set $\Gamma :=\left\{\alpha \in \R \ | \ \alpha \neq 0, \alpha^{-1} \in F \right\}$. For $i = 1,\ldots,p$, set $$E_{i} := \left\{(\alpha,s) \in \Gamma \times S_0 \ | \ \exists w \in L, \alpha \lambda_i(w,s) = 1, \nabla_w\lambda_i(w,s) = 0\right\}.$$ This set is definable because it is defined by a first-order formula involving definable functions and $L$, $F$, $S_0$ which are definable sets. Let us fix an arbitrary $\alpha \in \Gamma$, and show that the set $E_{\alpha,i} := \{s \in \R^m \ | \  (\alpha,s) \in E_i\}$ has empty interior. By definable choice, \Cref{prop:definablechoice}, there exists $\Tilde{w} : E_{\alpha,i}\rightarrow \R^p$ definable, such that $\forall s \in E_{\alpha,i} $, $\alpha \lambda_i(\Tilde{w}(s), s) = 1$ and $\nabla_w \lambda_i(\Tilde{w}(s), s) = 0$. Assume for the sake of contradiction that there exists a nonempty open subset $U \subset E_{\alpha,i}$. By definability of $\Tilde{w}$ and stratification, $U$ can be chosen so that $\Tilde{w}$ is continuously differentiable on $U$. Then denoting $\Tilde{\lambda}_i : s \mapsto \lambda_i(\Tilde{w}(s), s)$ we have for all $s \in U$, $\nabla \Tilde{\lambda}_i(s) = 0$. The chain rule applied on $\Tilde{\lambda}_i$ yields 
\begin{equation*}
    \forall s \in U, \ \nabla \Tilde{\lambda}_i(s) =  \Jac {\Tilde{w}}(s)^\mathsf{T} \nabla_w \lambda_i (\Tilde{w}(s),s) + \nabla_s \lambda_i(\Tilde{w}(s),s)  = \nabla_s \lambda_i(\Tilde{w}(s),s) = 0,  
\end{equation*}
hence we have for all $s \in U$, $\nabla \lambda_i(\Tilde{w}(s), s) = 0$. In other words, since $\lambda_i(\Tilde{w}(s), s ) = \alpha^{-1}$ for all $s \in U$, then $\alpha^{-1}$ is a critical value of $\lambda_i$ which contradicts $\alpha \in \Gamma$. This shows that $E_{\alpha,i}$ has empty interior for all $\alpha$ in $\Gamma$, therefore $E_i$ also has empty interior. 

Set $\Delta = \left(\bigcup_{i=1}^p E_{i} \right)^c$, $\Delta$ is the complement of a finite union of definable sets with empty interiors so it is definable and dense. \Cref{lem:fubiniDefinable} implies that there are only  finitely many values $\alpha$ such that $\{s \in \R^m \ | \  (\alpha,s) \in \Delta\}$ is not dense in $\R^m$. Therefore, we may refine further $\Gamma$ by removing finitely many points, and refine $\Delta$ accordingly such that it satisfies the desired projection property: for every $\alpha \in \Gamma$, the set $\{s \in \R^m \ | \  (\alpha,s) \in \Delta\}$ is dense in $\R^m$.

Now, fix $\alpha \in \Gamma$ and $s$ such that $(\alpha,s) \in \Delta$. Consider the set 
\begin{equation*}
    K_{\alpha, s} = \left\{ w \in L_s \ | \ \Phi'_{\alpha, s}(w) = I_p - \alpha \nabla^2_w g(w,s) \quad \text{is not invertible} \right\} 
\end{equation*}
where $I_p$ is the identity matrix of size $p$. Diagonalizing $\nabla^2_w g(w,s)$, the determinant of $\Phi'_{\alpha, s}(w)$ is $\prod_{i = 1}^p (1 - \alpha \lambda_i(w, s))$. It is equal to zero if and only if there exists $i \in \left\{1, \ldots, p\right\}$ such that $\alpha \lambda_i(w,s) = 1$ hence $K_{\alpha, s} = \bigcup_{i = 1}^p \left\{w \in L_s \ | \ \alpha\lambda_i(w,s) = 1 \right\}$. 
Since $\alpha \in \Gamma$ and $(\alpha,s) \in \Delta$, by construction of $\Delta$, $\alpha^{-1}$ is a regular value for the functions $w \mapsto \lambda_i(w, s)$, defined for $w \in L_s$, for all $i=1,\ldots,p$. So the set $K_{\alpha,s}$ is a union of $p-1$ dimensional submanifolds in $L_s$ and $K_{\alpha,s}^c$ is open and dense set in $L_s$. Then, let $Z \subset \R^p$ definable and such that $\dim Z \leq p-1$. Assume for the sake of contradiction that there exists a nonempty open set $V \subset \Phi_{\alpha,s}^{-1}(Z)$. The intersection $V \cap K_{\alpha,s}^c$ is open and nonempty because $K_{\alpha, s}^c$ is dense and both sets are open. Since $\Phi_{\alpha,s}$ is a local diffeomorphism on $K_{\alpha,s}^c$, the image $\Phi_{\alpha,s}(V \cap K_{\alpha,s}^c)$ has a nonempty interior but is included in $Z$ of dimension $p-1$,  which is a contradiction. The claim is proved.
\end{proof}

The following claim is a consequence of \Cref{lem:fubiniDefinable}.
\begin{claim} \label{claim:definabilitycorrectset} Let $g \colon \R^p \times \R^m \to \R$ be a definable function and $v \colon \R^p \times \R^m \to \R^p \times \R^m$ be a definable map such that $\nabla_w g = v$ on a definable dense open set $C \subset \R^p \times \R^m$. Then there is a definable set $Z\subset \R^p$, dense, such that for all $w \in Z$, $\nabla_w g(w,s) = v(w,s)$ for all $s$ in a definable dense open set in $\R^m$. 
\end{claim}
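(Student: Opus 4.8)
The plan is to apply \Cref{lem:fubiniDefinable} essentially verbatim to the set $C$, using the elementary fact that sections of open definable sets are again open and definable.

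First I would record what I need about the dense open definable set $C$ on which $\nabla_w g = v$. For any fixed $w \in \R^p$, the section $C_w = \{s \in \R^m \ | \ (w,s) \in C\}$ is definable, being a slice of a definable set, and it is open in $\R^m$ because $C$ is open in $\R^p \times \R^m$. Moreover, at every point $(w,s) \in C$ the partial gradient $\nabla_w g(w,s)$ is defined and coincides with $v(w,s)$, since that identity is precisely the hypothesis defining $C$. (If one wants $\nabla_w g$ to be genuinely defined and continuous from the outset, one may first intersect $C$ with a dense open definable set on which $g$ is $C^1$, produced by stratification via \Cref{def:stratification}; this intersection is still dense and open, so no generality is lost and I will keep calling it $C$.)

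Next I would invoke \Cref{lem:fubiniDefinable} with $r = p$, $q = m$, and $L = C$. Since $C$ is dense and definable, the forward implication of the lemma furnishes a dense definable set $Z \subset \R^p$ such that for every $w \in Z$ the section $C_w$ is dense in $\R^m$. Combined with the previous paragraph, this means that for each $w \in Z$ the section $C_w$ is simultaneously definable, open, and dense in $\R^m$, which is exactly the type of set the claim asks for.

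Finally I would read off the conclusion: fixing $w \in Z$ and taking the dense open definable set $C_w \subset \R^m$, every $s \in C_w$ satisfies $(w,s) \in C$, hence $\nabla_w g(w,s) = v(w,s)$. Thus $Z$ together with the family $(C_w)_{w \in Z}$ meets all requirements. I do not anticipate a real obstacle here: the proof is a one-line consequence of \Cref{lem:fubiniDefinable} once the identity is transferred along slices, and the only point deserving a moment's care is checking that $C_w$ is open and definable so that the lemma applies and returns a \emph{definable} $Z$; both are immediate from the openness and definability of $C$.
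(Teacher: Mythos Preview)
Your proposal is correct and matches the paper's approach exactly: the paper merely states that the claim ``is a consequence of \Cref{lem:fubiniDefinable}'' without writing out the details, and your argument is precisely the intended unpacking --- apply the lemma to $L=C$ to get the dense definable $Z$, then observe that each slice $C_w$ is automatically open and definable because $C$ is. The parenthetical about refining $C$ by stratification is a reasonable precaution but not strictly needed, since the hypothesis already posits that $\nabla_w g$ exists and equals $v$ on $C$.
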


\begin{theorem}[Genericity of gradient sequences] \label{prop:backpropisgradientae} 
				Let $g \colon \R^p \times \R^m \rightarrow \R$ and $v : \R^p \times \R^m \rightarrow \R^p$ definable functions. Assume there is a definable dense open set $C \subset \R^p \times \R^m$ such that for all $(w, s) \in C$, $ \nabla_{w} g(w, s) = v(w, s)$. Let $R \subset \R^p$ be the definable dense set such that for all $w \in R$, 
		\begin{itemize}
						\item[--] $v(w,s) = \nabla_w g(w,s)$ for all $s$ in a dense definable set.
						\item[--] $g(\cdot, s)$ is $C^2$ in a neighborhood of $w$, for all $s$ in a dense definable set. 
		\end{itemize}
				Given an arbitrary sequence $(s_k)_{k \in \N}$ in $\R^m$ and an arbitrary $w_0\in \R^p$, consider the recursion
		\begin{equation}
		    \label{eq:sequencebackpropproof}
				w_{k+1} = w_k - \alpha_k v(w_k, s_k) \text{ for all } k \in \N.
		\end{equation}
		Then there is $\Gamma \subset \R$ which complement is finite such that if $\{\alpha_k\}_{k \in \N} \subset \Gamma$, then for each $k \in \N$, there exists a dense definable subset $\Sigma_k \subset \R^p \times (\R^m)^k$ such that if $(w_0,s_0,\ldots,s_{k-1}) \in \Sigma_k$, then $w_i \in R$, for all $i = 0, \ldots,k $.
		In particular there is a full measure residual set, $W \subset \R^p$ such that if $w_0 \in W$  and $(s_0,\ldots, s_{k-1})$ belongs to some definable dense set, then $w_k \in R$.
\end{theorem}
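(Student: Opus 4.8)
The plan is to combine the pull-back estimate of \Cref{claim:preimageImproved} with the Fubini-type density criterion of \Cref{lem:fubiniDefinable}. Let $\Gamma$, the definable dense open set $L$ (on which $g$ is $C^2$), and the definable dense set $\Delta \subset \Gamma \times \R^m$ be those produced by \Cref{claim:preimageImproved}, and impose $\{\alpha_k\}_{k\in\N} \subset \Gamma$. For a fixed noise sequence $(s_0,\dots,s_{k-1})$ I introduce the \emph{nominal maps} $\Phi_{\alpha_i,s_i} = \Id - \alpha_i \nabla_w g(\cdot,s_i)$ on $L_{s_i}$ and their composites $\Psi_i := \Phi_{\alpha_{i-1},s_{i-1}} \circ \cdots \circ \Phi_{\alpha_0,s_0}$ (with $\Psi_0 = \Id$). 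The first key point is that, for $(\alpha_i,s_i) \in \Delta$, each $\Phi_{\alpha_i,s_i}$ enjoys the preimage property of \Cref{claim:preimageImproved}, and this property is stable under composition since $(\Phi'\circ\Phi)^{-1}(Z) = \Phi^{-1}((\Phi')^{-1}(Z))$ and preimages of definable sets under definable maps are definable; hence for any definable $Z$ with $\dim Z \le p-1$ one has $\dim \Psi_i^{-1}(Z) \le p-1$.

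The second, and most delicate, point is that the recursion \eqref{eq:sequencebackpropproof} is driven by $v$, which agrees with $\nabla_w g$ only on $C$, so the true iterates need not coincide with $\Psi_i(w_0)$. I would argue by a finite induction on $i$ that, with $(s_0,\dots,s_{k-1})$ fixed, the actual iterate satisfies $w_i = \Psi_i(w_0)$ as long as the nominal trajectory stays \emph{clean}, namely $\Psi_j(w_0) \in C_{s_j} \cap L_{s_j}$ for every $j < i$: on $C$ one has $v(\cdot,s_j)=\nabla_w g(\cdot,s_j)$, so one step of \eqref{eq:sequencebackpropproof} reproduces one application of $\Phi_{\alpha_j,s_j}$. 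Consequently, for fixed $(s_0,\dots,s_{k-1})$ the set of initial points $w_0$ for which some iterate $w_0,\dots,w_k$ leaves $R$, or for which the trajectory fails to be clean, is contained in the finite union $\bigcup_{i=0}^{k}\Psi_i^{-1}(R^c) \cup \bigcup_{i=0}^{k-1}\Psi_i^{-1}(C_{s_i}^c) \cup \bigcup_{i=0}^{k-1}\Psi_i^{-1}(L_{s_i}^c)$. Since $R$ is dense definable, $C_{s_i}$ is dense for generic $s_i$, and $L_{s_i}$ is dense open for $(\alpha_i,s_i)\in\Delta$, the sets $R^c$, $C_{s_i}^c$, $L_{s_i}^c$ are definable of dimension at most $p-1$; the composition-stable preimage property then forces this bad set to have dimension at most $p-1$, so its complement is a dense (definable) slice of good $w_0$.

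To assemble the global set, I would take $Z \subset (\R^m)^k$ to be the set of $(s_0,\dots,s_{k-1})$ such that $(\alpha_i,s_i)\in\Delta$ and $C_{s_i}$ is dense in $\R^p$ for all $i<k$. The first condition cuts out a dense open set because $\alpha_i\in\Gamma$, and the second a dense definable set by applying the direct implication of \Cref{lem:fubiniDefinable} to the dense definable set $C$ (with the $s$-variables as base); hence $Z$ is dense definable. Defining $\Sigma_k \subset \R^p \times (\R^m)^k$ by the first-order (hence definable) conditions that the trajectory through $(w_0,s_0,\dots,s_{k-1})$ is clean and satisfies $w_i\in R$ for all $i=0,\dots,k$, the preceding paragraph shows that for every $(s_0,\dots,s_{k-1}) \in Z$ the $w_0$-slice of $\Sigma_k$ is dense; the reverse implication of \Cref{lem:fubiniDefinable} then yields that $\Sigma_k$ is dense definable, which is exactly the first assertion.

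Finally, for the ``in particular'' statement I would apply the direct implication of \Cref{lem:fubiniDefinable} to the dense definable set $\Sigma_k$, now reading $w_0 \in \R^p$ as the base variable: this produces a dense definable $W \subset \R^p$ such that for every $w_0\in W$ the corresponding slice $\{(s_0,\dots,s_{k-1}) : (w_0,s_0,\dots,s_{k-1})\in\Sigma_k\}$ is dense definable in $(\R^m)^k$, and membership then forces $w_k\in R$. Being dense and definable, $W$ has full measure and is residual, since its complement is contained in a closed definable nowhere-dense set (see the stratification discussion after \Cref{def:stratification}). The main obstacle is the coupling isolated in the second paragraph: because the dynamics is generated by $v$ rather than $\nabla_w g$, one cannot iterate $\Phi$ naively, and the whole argument hinges on showing that the trajectory generically stays in $C$ while simultaneously controlling $R$-membership and the domains $L_{s_i}$, all expressed as pull-backs of dimension-$(p-1)$ definable sets through the composite $\Psi_i$; verifying the definability of every set involved, together with the composition-stability of the preimage estimate, is the technical heart of the proof.
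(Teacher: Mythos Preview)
Your approach is correct and rests on the same two ingredients as the paper's proof (\Cref{claim:preimageImproved} for the dimension-preserving preimage estimate and \Cref{lem:fubiniDefinable} for the Fubini-type density transfer), but your organization is more direct: you build $\Sigma_k$ in one shot by observing that the preimage property of $\Phi_{\alpha,s}$ is stable under composition, so that a single pull-back of the finite union $R^c \cup C_{s_i}^c \cup L_{s_i}^c$ through $\Psi_i$ suffices. The paper instead proceeds by a forward induction $\Sigma_k \to \Sigma_{k+1}$ and, inside each induction step, a backward recursion on auxiliary set-valued maps $N_k(s_k) = \Phi_{\alpha_k,s_k}^{-1}(R)$, $N_i(s_i,\ldots,s_k) = \Phi_{\alpha_i,s_i}^{-1}\bigl(N_{i+1}(s_{i+1},\ldots,s_k)\bigr)$, then sets $\Sigma_{k+1} = (\Sigma_k \times \Delta_k) \cap \Graph(N_0)$. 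Both routes amount to iterating the same pull-back bound $k$ times; yours is arguably cleaner, while the paper's inductive packaging makes the nested structure $\Sigma_{k+1} \subset \Sigma_k \times \Delta_k$ explicit.

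There is one small gap in your last paragraph: the $W$ you produce from \Cref{lem:fubiniDefinable} applied to $\Sigma_k$ depends on $k$, whereas the statement requires a single $W$ working for every $k$ (this uniformity is what is used in \Cref{rem:avoidArtifactProba1}). The paper closes this by taking $W = \bigcap_{k \in \N} W_k$: each $W_k$ is dense definable, so its complement is a finite union of lower-dimensional strata, hence nowhere dense and Lebesgue-null; the countable intersection is therefore still residual and of full measure. Note that this $W$ need not be definable, so your closing justification (``being dense and definable, $W$ has full measure and is residual'') applies only to each $W_k$ individually, not to their intersection.
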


\begin{proof}
				Let $L \subset \R^p \times \R^m$, $\Gamma \subset \R$, $\Delta \subset \Gamma \times \R^m$, given by \Cref{claim:preimageImproved}. $g$ is $C^2$ on $L$ which is definable dense and open, and for every $\alpha \in \Gamma$, the definable set $\{s \in \R^m\ | \ (\alpha,s) \in \Delta\}$ is dense open in $\R^m$.
				
				Recall that $L \cap C \subset \R^p \times \R^m$ is definable dense. By \Cref{lem:fubiniDefinable} there exists a definable dense set $\Sigma_0 \subset \R^p$ such that for all $w \in \Sigma_0$, the set  $\{s \in \R^m|(w,s) \in L \cap C\}$ is dense. It satisfies the desired property, that is $\Sigma_0 \subset R$. Indeed for any $w \in \Sigma_0$, the set $\{s \in \R^m|(w,s) \in L \cap C\}$ is dense and for each such $s$, $(w,s) \in L \cap C$, that is $g$ is $C^2$ at $(w,s)$ and $v(w,s) = \nabla_w g(w,s)$ so that $w \in R$.
				
				We set for all $k$, $\Delta_k = \left\{s \in \R^m \ | \ (\alpha_k, s) \in \Delta \right\}$. 
				By \Cref{claim:preimageImproved}, for any $(\alpha,s) \in \Delta$, $\Phi_{\alpha, s} :=  \operatorname{Id}_p - \alpha \nabla_w g( \cdot, s)$ from $\{w \in \R^p \ | \ (w,s) \in L\}$, dense and open, to $\R^p$ verifies
				\begin{equation}
				\label{eq:dimensionpreimage}
				     \forall Z \subset \R^p \text{ definable},\ \dim Z \leq p-1 \implies \dim \Phi_{\alpha, s}^{-1}(Z) \leq p-1.
				\end{equation}
				Remark that $w_{k+1} = \Phi_{\alpha_k, s_k}\circ\ldots\circ\Phi_{\alpha_0, s_0}(w_0)$, as long as $s_i \in \Delta_i$ for $i= 0,\ldots, k$.

				Let us proceed by induction, fix $k \in \N$ and assume that we have $\Sigma_k \subset R \times \Delta_0 \times \ldots \times \Delta_{k-1}$, definable dense, such that for all $(w_0,s_0,\ldots, s_{k-1}) \in \Sigma_k$, $w_i \in R$ for all $i = 0,\ldots,k$. Note that for $k=0$, $\Sigma_0$ constructed above satisfies the desired hypothesis with the convention that the product set from $0$ to $-1$ is empty.

				Let us construct $\Sigma_{k+1}$. Remark that $s_i \in \Delta_i$ for $i = 0,\ldots, k$ so that $w_{k+1} = \Phi_{\alpha_k, s_k}\circ\ldots\circ\Phi_{\alpha_0, s_0}(w_0)$, as long as $(w_0,s_0,\ldots,s_{k}) \in \Sigma_k \times \Delta_k$.

				Consider the set-valued map $N_{k}\colon s_k \rightrightarrows \Phi_{\alpha_{k}, s_{k}}^{-1}(R)$ and by backward recursion for $i = k-1,\ldots, 0$, $N_{i} \colon (s_i, \ldots, s_k) \rightrightarrows \Phi_{\alpha_{i}, s_{i}}^{-1}(N_{i+1}(s_{i+1}, \ldots, s_k))$. 
				Set 
				$$\Sigma_{k+1} = \{(w,s_0,\ldots,s_k) \in \Sigma_k \times \Delta_k| w \in N_0(s_0,\ldots,s_k)\}.$$
				Let us verify that $\Sigma_{k+1}$ satisfies the desired properties. 
				We have $\Sigma_{k+1} \subset \Sigma_k \times \Delta_k$ so that for any $(w_0,s_0,\ldots, s_k) \in \Sigma_{k+1}$, $(w_0,s_0,\ldots, s_{k-1}) \in \Sigma_k$ and $w_i \in R$ for $i=0,\ldots,k$ by induction hypothesis. 
				Furthermore, $s_i \in \Delta_i$ for all $i = 0,\ldots, k$ so that $w_{k+1} =\Phi_{\alpha_k, s_k}\circ\ldots\circ\Phi_{\alpha_0, s_0}(w_0) $. Note that by construction, $w_0 \in N_0(s_0,\ldots,s_k)$ if and only if $\Phi_{\alpha_k, s_k}\circ\ldots\circ\Phi_{\alpha_0, s_0}(w_0) \in R$ which is the desired property. It remains to show that $\Sigma_{k+1}$ is dense, and the induction will be complete. Note that $\Sigma_{k+1} = \Sigma_k \times \Delta_k \cap \{(w,s_0,\ldots,s_k)|((s_0,\ldots,s_k),w) \in \Graph(N_0)\}$. Since $\Sigma_k \times \Delta_k$ is definable dense and $\Graph(N_0)$ is definable, it suffices to check that $\Graph(N_0)$ is dense. This is done by backward induction.

				Let us first check that $\Graph(N_k)$ is dense.
				We have $(s_k,w_k) \not \in \Graph(N_k)$ if and only if $(w_k,s_k) \not \in L$ ($\Phi_{\alpha_k, s_k}$ is not defined at $w_k$), or $w_k  \in \Phi_{\alpha_k, s_k}^{-1}(R^c)$ so that $\Graph(N_k)^c = L^c \cup \{(s_k,w_k)| (w_k,s_k) \in L,\, w_k \in \Phi_{\alpha_k, s_k}^{-1}(R^c)\}$. Recall that $R^c$ is definable and is the complement of a dense set, therefore it has at most dimension $p-1$ so that if $s_k \in \Delta_k$, $\Phi_{\alpha_k, s_k}^{-1}(R^c)$ also has dimension at most $p-1$. On the other hand, the set $\left\{w_k \in \R^p| (w_k, s_k) \in L^c \right\}$ is the complement of $L_{s_k}$ (with the notation of \Cref{claim:preimageImproved}) definable and dense for $s_k \in \Delta_k$. Therefore for all $s_k \in \Delta_k$, the set $\{w_k \in \R^p|(s_k,w_k) \not \in \Graph(N_k)\}$ has dimension at most $p-1$ and the set $\{w_k \in \R^p|(s_k,w_k) \in \Graph(N_k)\}$ is dense so that $\Graph(N_k)$ is dense by Lemma \ref{lem:fubiniDefinable}.
				
				This extends by backward induction. Assume that $\Graph(N_{i+1})$ is dense for some $i \in \{0,\ldots, k-1\}$. We have $(s_i,\ldots, s_k, w_i) \not\in \Graph(N_i)$ if and only if $(w_i,s_i) \not \in L$ ($\Phi_{\alpha_i, s_i}$ is not defined at $w_i$) or $w_i \in \Phi^{-1}_{\alpha_i,s_i}\left(N_{i+1}(s_{i+1},\ldots,s_{k})^c\right)$. As $N_{i+1}$ has a dense graph, then by \Cref{lem:fubiniDefinable}, for all $(s_{i+1}, \ldots, s_k)$ in a dense definable set $R_i$, $N_{i+1}(s_{i+1},\ldots,s_{k})$ is dense and $N_{i+1}(s_{i+1},\ldots,s_{k})^c$ has dimension at most $p-1$. Therefore, similarly as above, for all $s_i \in \Delta_i$ and $(s_{i+1}, \ldots, s_k) \in R_i$, the set $\{w_i|(s_i,\ldots,s_k,w_i) \in \Graph(N_i)\}$ is dense and $\Graph(N_{i})$ is dense. By induction, $\Graph(N_0)$ is dense and this shows that $\Sigma_{k+1}$ has the correct property.

				This proves the first statement. Now by \Cref{lem:fubiniDefinable}, for each $k \in \N$, there is $W_k \subset \R^p$ definable dense such that for each $w_0 \in W_k$, for all $(s_0,\ldots, s_{k-1})$ in a dense definable set, $w_i \in R$ for all $i = 0, \ldots, k$. We set $W = \cap_{k \in \N} W_k$, $W$ is a residual set by countable intersection of residual sets (with dense interior), and it has full measure as a countable intersection of full measure sets. 
\end{proof}

\begin{remark}
				With the notation of \Cref{prop:backpropisgradientae}, if $(s_i)_{i\in\N}$ are independent and identically distributed with a density with respect to Lebesgue measure, $w_0 \in W$ and $\alpha_k \in \Gamma$ for all $k \in \N$, then almost surely, $w_k \in R$ for all $k \in \N$.
				\label{rem:avoidArtifactProba1}
\end{remark}

\section{Proofs of Section 2}
\label{section:proofresults}
\begin{proof}[Proof of \Cref{corr:pathdiffexpectation}] Let us show that \Cref{theorem:leibnizconservative} applies. 1: measurability of $f$ follows from semialgebraicity in \Cref{ass:convergence_essacc} (3) and integrability follows from \Cref{ass:convergence_essacc} (1). 2: these also entail that for all $s \in \R^m$, $f(\cdot,s)$ is locally Lipschitz and path-differentiable \cite[Proposition 2]{bolte2019conservative} with conservative gradient $\partial^c_w f(\cdot,s)$. 3: by semialgebraicity of $f$, the set-valued map $\partial^c_w f$ is semialgebraic, hence jointly measurable. 
Toward 4, let $K \subset \R^p$ compact and $M > 0$ such that $K \subset B(0, M)$. Let $\kappa$ be given by \Cref{ass:convergence_essacc} (1), we have $\|\partial^c_w f(x, s)\| \leq \kappa(s) (1 + M^{q_0})$ for all $x \in K$, for almost all $s \in \R^m$ which implies 4 because $\kappa$ is integrable. 

By \Cref{theorem:leibnizconservative}, $\E_{\xi \sim P}[\partial^c_w f(\cdot, \xi)]$ is a conservative gradient for $\mathcal{J}$ and $\mathcal{J}$ is path-differentiable and in particular admits a chain rule with respect to $\partial^c \mathcal{J}$.
\end{proof}

\begin{proof}[Proof of \Cref{th:convergence_essacc}]
			
				We first show \ref{th:convergence_essac_1} which is an application of the first part of \Cref{theorem:subsequentialconvergence}. We need to verify that \Cref{ass:convergence_essacc} is sufficient for \Cref{ass:algorithmicAssumptionStochApproximation} to hold. We let $D_\mathcal{J}(\cdot) = \E_{\xi \sim P}[\partial^c_w f(\cdot,\xi)]$ which is convex valued because integration in \Cref{def:aumann} preserves convexity. Furthermore, for all $s \in \R^m$, $\partial^c_w f(\cdot,s)$ is a conservative gradient for $f(\cdot,s)$, because $f$ is semialgebraic, hence path differentiable. 
				We are therefore in the setting of \Cref{ass:algorithmicAssumptionStochApproximation}. 1: measurability of $f$ follows from semialgebraicity in \Cref{ass:convergence_essacc} (3) and integrability follows from \Cref{ass:convergence_essacc} (1). 2: by semialgebraicity of $f$, $(w,s) \mapsto \partial^c_w f(w,s)$ is also semialgebraic and therefore jointly measurable. \Cref{ass:convergence_essacc} implies that for all $s$ and all $w$, $f(\cdot,s)$ is $\kappa(s)(1+\|w\|^{q_0})$ Lipschitz for on $B(0,\|w\|)$, so that $\partial^c_w f(w,s)$ satisfies the same bound which is \Cref{ass:integralconservative}.3. The rest of \Cref{ass:algorithmicAssumptionStochApproximation} follows from \Cref{ass:convergence_essacc} and \Cref{th:convergence_essac_1} follows from \Cref{theorem:subsequentialconvergence}. 

				We now turn to \cref{th:convergence_essac_2}. The arguments are the same, except that a smaller set-valued field drives the dynamics thanks to \Cref{prop:backpropisgradientae} and an interchanging of integral and derivative. As above, the function $f$ is semialgebraic. Let $C := \left\{(w,s) \in \R^p \times \R^m \ | \ \partial^c_w f(w,s) = \{\nabla_w f(w,s)\} \right\}$. By \Cref{ass:convergence_essacc}, for almost all $s \in \R^m$, $\partial^c_w f(\cdot,s)$ is a conservative gradient for $f(\cdot,s)$, so that $\{w \in \R^p|(w,s) \in C\}$ is a full measure definable set \cite[Theorem 1]{bolte2019conservative} hence dense definable. By \Cref{lem:fubiniDefinable}, $C$ is also dense. Since for all $(w,s)$, $v(w,s) \in \partial^c f(w,s)$, for all  $(w,s) \in C \subset \R^p \times \R^m$, $\nabla_w f(w,s) = v(w,s)$ and we can apply \Cref{prop:backpropisgradientae} and \Cref{rem:avoidArtifactProba1}. There is a full measure residual set $W \subset \R^p$, and $\Gamma \subset \R$ whose complement is finite such that if $w_0 \in W$ and $\alpha_k \in \Gamma$ for all $k \in \N$, almost surely, for all $k \in \N$, we have $v(w_k, s_k) = \nabla_w f(w_k,s_k)$ and $w_k \in R$, where $R \subset \R^p$ is a dense definable set such that for all $w \in R$:
		\begin{itemize}
						\item[--] $v(w,s) = \nabla_w g(w,s)$ for all $s$ in a dense definable set.
						\item[--] $f(\cdot, s)$ is $C^2$ in a neighborhood of $w$, for all $s$ in a dense definable set. 
		\end{itemize}
		
				Fix $w \in R$ and let $\kappa$ be as in \Cref{ass:convergence_essacc}. For all $a \in [-1, 1]$ and $h \in \R^p$, $ |f(w + ah, s) - f(w, s)| \leq \kappa(s)(1 + (\|w\|+ \|h\|)^{q_0} ) \|h\|$ for $P$-almost all $s \in \R^m$. The right-hand side is integrable, and we may apply the dominated convergence theorem,
				\begin{align*}
				    &\lim_{a \to 0}   \frac{ \mathcal{J}(w + ah) - \mathcal{J}(w)}{a} \quad=\quad\lim_{a \to 0} \frac{1}{a} \int_{\R^m} f(w + ah, s) - f(w, s) \di P(s)\\ 
				    =\;& \int_{\R^m}  \lim_{a \to 0} \frac{f(w + ah, s) - f(w, s)}{a}\di P(s) \quad=\quad \int_{\R^m} \langle \nabla_w f(w, s), h \rangle \di P(s) \\ 
					=\;& \left\langle \int_{\R^m}  \nabla_w  f(w, s)\di P(s), h \right\rangle,
				\end{align*}
				where the limit under the integral is because for all $s$ in a dense definable set (hence almost all $s$), $f(\cdot, s)$ is $C^2$ in a neighborhood of $w$. This shows, in particular, that $\mathcal{J}$ is differentiable at $w$.

				We set $\tilde{D}_\mathcal{J} = \partial^c \mathcal{J}$, $\bar{v}$ a measurable selection in $\tilde{D}_\mathcal{J}$ such that $v(w) = \nabla \mathcal{J}(w)$ whenever $\mathcal{J}$ is differentiable at $w$. This means that for all $w$, there is $v_w \colon \R^m \to \R^p$ integrable such that $\int v_w(s) dP(s) = \bar{v}(w)$ and for all $s$, $v_w(s) = \nabla \mathcal{J}(w)\in \partial_w^c f(w,s)$. Set $\tilde{v}(w,s) = v_w(s)$ for all $(w,s)$ and let $L$ be as in \Cref{claim:preimageImproved} definable, open and dense. We have that for all $(w,s) \in L$, $g(\cdot,s)$ is $C^2$ in a neighborhood of $w$ so that $\tilde{v}(w,s) = \nabla_w f(w,s)$. Since $\tilde{v}$ agrees with $\nabla_w f(w,s)$ (definable, measurable) on the full measure set $L$ (definable, dense), it is therefore jointly measurable \cite[Proposition 3, Section 18.1]{royden1968real} and satisfy for all $w$, $\int \tilde{v}(w,s)dP(s) \in \tilde{D}_\mathcal{J}(w)$.

				Now, since for all $k \in \N$, $w_k \in R$ almost surely, it holds that almost surely, 
				$\mathcal{J}$ is differentiable at $w_k$ with $\nabla \mathcal{J}(w_k) = \E \left[\nabla_w f (w_k, \xi_k)|w_k \right] = \E \left[v (w_k, \xi_k)|w_k \right]$ and $v(w_k,s_k) = \nabla_w f(w_k,s_k) = \tilde{v}(w_k,s_k)$. We actually have for all $k \in \N$
				\begin{align*}
								&w_{k+1} = w_k - \alpha_k \tilde{v}(w_k,s_k),\\
								&\int_{\R^m} \tilde{v}(w_k,s) \di P(s) = \nabla \mathcal{J}(w_k), \text{ and}&
								\int_{\R^m} \tilde{v}(w,s) \di P(s) &\in \partial^c \mathcal{J}(w),\, \forall w \in \R^p
				\end{align*}
				which is equivalent to the recursion given in \Cref{th:convergence_essac_2}. To obtain the desired convergence result we may apply \Cref{theorem:subsequentialconvergence} similarly as above, with $\tilde{D}_\mathcal{J} = \partial^c \mathcal{J}$ in place of $D_\mathcal{J}$ and $\tilde{v}$ in place of $v$ and the result follows.
\end{proof}

\begin{proof}[Proof of \Cref{th:algosemialgebraic}]
				We have already shown that \Cref{ass:convergence_essacc} is sufficient to ensure that \Cref{ass:algorithmicAssumptionStochApproximation} holds so that \Cref{theorem:subsequentialconvergence} applies. The claimed statement is indeed the second statement of \Cref{theorem:subsequentialconvergence} which holds \Cref{ass:summabilitystepsizes}. So we simply need to check that \Cref{ass:morsesard} holds.

				As for the first statement, under \Cref{ass:convergence_acc}, the function $\mathcal{J}$ is definable using \cite[Theorem 1.3]{Cluckers_2011}. From \Cref{proposition:definabilityofsetvaluedintegral} the set-valued map $D_{\mathcal{J}} = \E_{\xi \sim P} \left[\partial^c f(\cdot, \xi) \right]$ is definable and it is a conservative gradient for $\mathcal{J}$ by \Cref{corr:pathdiffexpectation}. By definable conservative Sard's theorem \cite{bolte2019conservative}, the set of $D_{\mathcal{J}}$-critical values of $\mathcal{J}$ which is $\mathcal{J}(\crit D_{\mathcal{J}})$ is finite, hence \Cref{ass:morsesard} is satisfied. The result is then a consequence of the second statement of \Cref{theorem:subsequentialconvergence} under hypothesis 2 of \Cref{lem:summabilityNoise}.

				Regarding the second statement, \Cref{th:convergence_essacc} gives us $\Gamma \subset \R$ whose complement is finite, $W \subset \R^p$ of full measure and residual, such that if $\alpha_k \in \Gamma$ for all $k \in \N$ then for all initialization $w_0$ in $W$ we have with probability $1$ the recursion holds with $\tilde{v} = \nabla_w f$ in place of $v$ and $\tilde{D}_\mathcal{J} := \partial^c \mathcal{J}$ in place of $D_\mathcal{J}$. Hence, almost surely, one has an actual stochastic subgradient sequence so that the result follows as above from \Cref{theorem:subsequentialconvergence} applied to $\tilde{v}$ and $\tilde{D}_\mathcal{J}$ which are both semialgebraic. 
\end{proof}

\begin{proof}[Proof of {\rm\Cref{corr:convergenceclarke}}]
We prove the regression case, i.e., when $P$ has a compactly supported semialgebraic density $\phi$ with respect to the Lebesgue measure. The classification case uses similar arguments, see \Cref{remark:discretecontinuousdistribution} (2) We apply \Cref{th:convergence_essacc}, which holds replacing the Clarke subgradient by a general conservative gradient $D$, see \Cref{remark:conservativeinsteadofclarke} (2) and \Cref{theorem:subsequentialconvergence} similarly as above. We verify that in the setting of \cref{subsection:deeplearning}, Assumptions \ref{ass:convergence_essacc}, \ref{ass:convergence_acc} and hypothesis 3 of \Cref{lem:summabilityNoise} are satisfied.
Under \Cref{ass:lipschitzandsemialgebraic}, let $D : \R^p \times \R^d \times \R^I \rightrightarrows \R^p$ be the product of Clarke Jacobians in (\ref{eq:defBackprop}), it is semialgebraic, for almost all $s \in \R^d \times \R^I$, $\backprop_wf(\cdot, s)$ is a semialgebraic selection of $D(\cdot, s)$, and $D(\cdot, s)$ is a conservative gradient for $f(\cdot, s)$.  As $D$ is semialgebraic, it is polynomially bounded hence there exist $K > 0$, $p_0  \in \N$, $q_0 \in \N$ such that for all $w \in \R^p$ and almost all  $s \in \R^d \times \R^I$, $\|D(w,s)\| \leq K(1 + \|s\|^{p_0}) (1 + \|w \|^{q_0})$.  Let $\kappa(s) := K(1+ \|s\|^{p_0})$ for $s \in \R^d \times \R^I$. Since $P$ has compact support, then the function $\kappa$ is square integrable with respect to $P$. \Cref{ass:convergence_essacc} (1) is satisfied. $f$ and $\backprop_w$ are semialgebraic by assumption, hence \Cref{ass:convergence_essacc} (3) is satisfied. The other assumptions are directly satisfied by assumptions of \Cref{corr:convergenceclarke}. Note that for any $w$, $\sup_{s \in \operatorname{supp} P} \|\backprop_wf(w,s) \|\leq \tilde{K} (1 + \|w\|^{q_0})$ which is locally bounded so that we may apply \Cref{theorem:subsequentialconvergence} under the hypothesis 3. of \Cref{lem:summabilityNoise} similarly as in the proof of \Cref{th:algosemialgebraic}.

\end{proof}

\section{Generalized gradients of Norkin and conservativity}
\label{section:norkin}
\subsection{Definitions}
Throughout this section, $f \colon \R^p \to \R$ is Lipschitz continuous and $D \colon \R^p \rightrightarrows \R^p$ is locally bounded nonempty convex valued and upper semicontinuous. Convex values are indeed required by Norkin in \cite{norkin1978nonlocal,norkin1980,norkin1986}.
\begin{definition}[Semismooth generalized gradients]{\rm
	The set-valued mapping 	$D$ is a {\it generalized gradient of $f$}  if for all $x \in \R^p$, we have
		\begin{align*}
				\underset{y \to x,\, g \in D(y)}{\lim\sup} \frac{f(y) - f(x) - \left\langle g, y-x\right\rangle}{\|y-x\|} = 0.
		\end{align*}
		\label{def:norkinGradient}}
\end{definition}
The $\lim\sup$ property in the definition is referred to as the {\em semismoothness property} of the generalized gradients. 
On the other hand, conservative gradients are defined in \Cref{def:conservativeField}. In both cases, the corresponding set-valued gradient map is a singleton almost everywhere and contains the Clarke subgradient of $f$  everywhere \cite{norkin1978nonlocal,bolte2019conservative}. Functions with generalized gradient are called  {\em differentiable in the generalized sense}, and those with conservative gradients are called path-differentiable.
\subsection{Relations between the two notions}
The following strengthens the chain rule along semismooth curves given in \cite[Theorem 1]{Ruszczynski2020}.
\begin{proposition}
		If $D$ is a generalized gradient of $f$ in the sense of Definition \ref{def:norkinGradient}, then it is a conservative gradient of $f$.
\end{proposition}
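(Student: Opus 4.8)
The regularity requirements in Definition~\ref{def:conservativeField}---nonempty values, local boundedness, graph closedness---are part of the standing hypotheses on $D$ (upper semicontinuity together with compact values and local boundedness yields graph closedness), so the whole content is the chain rule. The plan is to fix an absolutely continuous curve $\gamma\colon[0,1]\to\R^p$; since $f$ is Lipschitz, $f\circ\gamma$ is absolutely continuous, hence differentiable at almost every $t$, and I restrict attention to the full-measure set $T$ of times where both $\dot\gamma(t)$ and $(f\circ\gamma)'(t)=:\beta(t)$ exist. Writing $x=\gamma(t)$ and $u=\dot\gamma(t)$, the target is $\beta(t)=\langle v,u\rangle$ for every $v\in D(x)$, equivalently $\beta^-(t)=\beta(t)=\beta^+(t)$, where $\beta^+(t):=\max_{v\in D(x)}\langle v,u\rangle$ and $\beta^-(t):=\min_{v\in D(x)}\langle v,u\rangle$ are the two support values of the convex set $D(x)$ in the direction $u$.

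The first step is the weak pointwise sandwich $\beta^-(t)\le\beta(t)\le\beta^+(t)$ for $t\in T$ (essentially the chain rule along semismooth curves of \cite[Theorem 1]{Ruszczynski2020}). It follows by plugging $y=\gamma(s)$ into the semismoothness inequality $f(y)-f(x)\le\langle g,y-x\rangle+\varepsilon\|y-x\|$, valid for all $g\in D(y)$ and $y$ near $x$, then dividing by $s-t$ and letting $s\to t^+$ and $s\to t^-$: using $\tfrac{\gamma(s)-x}{s-t}\to u$, $\tfrac{\|\gamma(s)-x\|}{|s-t|}\to\|u\|$, and local boundedness plus graph closedness to send selections $g_s\in D(\gamma(s))$ to limits in $D(x)$, the right and left approaches bound $\beta(t)$ above by $\beta^+(t)$ and below by $\beta^-(t)$ respectively.

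This sandwich is symmetric and, taken pointwise, does \emph{not} force the interval $[\beta^-(t),\beta^+(t)]$ to collapse; semismoothness, as opposed to mere upper semicontinuity, is precisely what prevents a nondegenerate interval from persisting on a positive-measure set, so the collapse must be extracted globally. Since $\beta^+(t)\ge\beta(t)$, one always has $\int_0^1\beta^+\ge\int_0^1\beta=f(\gamma(1))-f(\gamma(0))$; the key is the reverse inequality $f(\gamma(1))-f(\gamma(0))\ge\int_0^1\beta^+(t)\di t$. I would obtain it by telescoping over a fine tagged partition $0=t_0<\dots<t_N=1$: applying semismoothness with base point $\gamma(t_i)$ and perturbation $\gamma(t_{i-1})$ and rearranging gives $f(\gamma(t_i))-f(\gamma(t_{i-1}))\ge\max_{g\in D(\gamma(t_{i-1}))}\langle g,\gamma(t_i)-\gamma(t_{i-1})\rangle-\varepsilon\|\gamma(t_i)-\gamma(t_{i-1})\|$; bounding each maximum below by its value at a maximizer of $\langle\,\cdot\,,\dot\gamma(t_{i-1})\rangle$ in $D(\gamma(t_{i-1}))$, namely $\beta^+(t_{i-1})$, recognizes $\sum_i\beta^+(t_{i-1})(t_i-t_{i-1})$ as a Riemann sum for $\int_0^1\beta^+$. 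Summing and letting the mesh and $\varepsilon$ tend to $0$ yields $f(\gamma(1))-f(\gamma(0))\ge\int_0^1\beta^+$, and combined with the trivial reverse inequality this gives $\int_0^1(\beta^+-\beta)=0$ with $\beta^+\ge\beta$, hence $\beta=\beta^+$ almost everywhere. Applying the same argument to the reversed curve $t\mapsto\gamma(1-t)$ gives $\beta=\beta^-$ almost everywhere, so $\beta^-=\beta=\beta^+$ a.e., the chain rule of Definition~\ref{def:conservativeField} holds, and $D$ is conservative.

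The main obstacle is making this telescoping rigorous, because the semismoothness remainder is only a pointwise modulus: its radius of validity at $\gamma(t)$ need not be bounded below along the compact curve, so a naive uniform partition is unavailable. The natural fix is to select the partition with a gauge, via Cousin's lemma, so that each subinterval sits inside the semismoothness radius of its tag, while arranging the gauge to be negligible off the full-measure set of Lebesgue points of $\dot\gamma$ and $\beta^+$ so that the Riemann sums of these merely integrable functions still converge to their integrals; one also needs the joint measurability of $t\mapsto\beta^\pm(t)$, which follows by writing them as support functions of $D$ and invoking measurable selection (Castaing's theorem). These are the delicate points; once they are in place, the collapse, and with it conservativity, follows.
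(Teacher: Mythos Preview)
Your sandwich step $\beta^-(t)\le\beta(t)\le\beta^+(t)$ is correct and cleanly argued. The difficulty is entirely in the collapse step, and the obstacle you flag is more than ``delicate'': as written, the telescoping cannot be closed with a Cousin gauge. In your increment bound
\[
f(\gamma(t_i))-f(\gamma(t_{i-1}))\ \ge\ \max_{g\in D(\gamma(t_{i-1}))}\langle g,\gamma(t_i)-\gamma(t_{i-1})\rangle-\varepsilon\|\gamma(t_i)-\gamma(t_{i-1})\|,
\]
the semismoothness inequality is applied at base point $\gamma(t_i)$, so its radius of validity is governed by a gauge at $t_i$; but the Riemann-sum tag (for $\beta^+$, for $\dot\gamma$, and for the set $D(\gamma(\cdot))$ appearing in the maximum) sits at $t_{i-1}$. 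A single Cousin gauge cannot serve both endpoints, and moving the tag to $t_i$ forces you to compare $\max_{D(\gamma(t_{i-1}))}\langle\cdot,\dot\gamma(t_i)\rangle$ with $\beta^+(t_i)=\max_{D(\gamma(t_i))}\langle\cdot,\dot\gamma(t_i)\rangle$, i.e.\ to \emph{lower}-bound the support function of $D$ at $\gamma(t_{i-1})$ by that at $\gamma(t_i)$. Only upper semicontinuity of $D$ is available, which gives the opposite inequality. The same obstruction appears if you swap roles to target $\int\beta^-$ instead. So without an extra continuity-type input for $D$ along the curve, the Henstock--Kurzweil route stalls.

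The paper takes a genuinely different, pointwise route that manufactures exactly this missing continuity. Assume for contradiction that $\beta^+>\beta$ on a set $E_2$ of positive measure with gap $\ge\epsilon$, and let $g(t)=\arg\max_{v\in D(\gamma(t))}\langle v,\dot\gamma(t)\rangle$. By Lusin's theorem there is a closed $E_3\subset E_2$ of positive measure on which $g$ is continuous; by the Lebesgue density theorem, pick $\bar t\in E_3$ of density $1$. Approaching $\bar t$ through $t\in E_3$ (possible on both sides since $\bar t$ has density $1$) and writing
\[
\frac{f(\gamma(t))-f(\gamma(\bar t))}{t-\bar t}
=\frac{\|\gamma(t)-\gamma(\bar t)\|}{t-\bar t}\cdot\frac{f(\gamma(t))-f(\gamma(\bar t))-\langle g(t),\gamma(t)-\gamma(\bar t)\rangle}{\|\gamma(t)-\gamma(\bar t)\|}
+\frac{\langle g(t),\gamma(t)-\gamma(\bar t)\rangle}{t-\bar t},
\]
the first fraction is bounded, the middle factor vanishes by semismoothness at $\gamma(\bar t)$ (note $g(t)\in D(\gamma(t))$), and the last term tends to $\langle g(\bar t),\dot\gamma(\bar t)\rangle=\beta^+(\bar t)$ by differentiability of $\gamma$ at $\bar t$ and continuity of $g|_{E_3}$. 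Hence $\beta(\bar t)=\beta^+(\bar t)$, contradicting the $\epsilon$-gap. The Lusin step is what replaces the (unavailable) continuity of $D$ and is precisely the missing ingredient in your argument.
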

\begin{proof}
	We shall use the chain rule characterization of conservative gradients, \Cref{def:conservativeField}.
	Let $D$ be a generalized gradient of $f$  as in \Cref{def:norkinGradient}, and $\gamma \colon[0,1] \to \R^p$ be an absolutely continuous path. Then both $\gamma$ and $f\circ \gamma$ are absolutely continuous, hence differentiable almost everywhere. 	Therefore, there exists a full measure subset $R \subset [0,1]$ such that both are differentiable at every point on $R$. 
	
	Suppose, toward a contradiction, that the chain rule is not valid along $\gamma$, that is, there exists a non zero measure set $E_1 \subset R$ such that for all $t \in E_1$, there is $g \in D(\gamma(t))$ such that $\frac{\di}{\di t} (f\circ\gamma)(t) \neq \left\langle g, \dot{\gamma}(t)\right\rangle$. Note that this implies that $\dot{\gamma}(t) \neq 0$ for all $t \in E_1$, since if $\dot{\gamma}(t)= 0$ then $0 = \frac{\di}{\di t} (f\circ\gamma)(t) = \left\langle g, \dot{\gamma}(t)\right\rangle$. Reducing $E_1$ and changing sign if necessary, we may assume without loss of generality that for all $t \in E_1$, there is $g \in D(\gamma(t))$ such that $\frac{\di}{\di t} (f\circ\gamma) (t) < \left\langle g, \dot{\gamma}(t)\right\rangle$.
	
	Consider the measurable function (measurability is justified in \cite{bolte2019conservative}), $g \colon [0,1] \to \R^p$, defined for all $t \in R$ by $g(t) = \arg\max_{v \in D(\gamma(t))} \left\langle \dot{\gamma}(t), v\right\rangle$ and $g(t) = 0$ otherwise. 

	We have for all $t \in E_1$, $0 < \left\langle \dot{\gamma}(t), g(t)\right\rangle - \frac{\di}{\di t} (f\circ\gamma) (t)$. This means that there is $\epsilon > 0$ and a nonzero set $E_2 \subset E_1$ such that $\epsilon \leq \left\langle \dot{\gamma}(t), g(t)\right\rangle - \frac{\di}{\di t} f\circ\gamma (t)$ for all $t \in E_2$ (otherwise, one would have $ \left\langle \dot{\gamma}, g\right\rangle - \frac{\di}{\di t} (f\circ\gamma) = 0$ almost everywhere on $E_1$).
	
	Let us apply Lusin's theorem (see, e.g.,  \cite[Section 3.3]{royden1968real}) and fix an arbitrary $\alpha>0$, such that $\lambda(E_2)>\alpha$. There is a closed subset $E_3 \subset E_2$ such that $\lambda(E_2 \setminus E_3) < \alpha$ and $g$ restricted to $E_3$ is continuous. The set $E_3$ has positive measure since $\lambda(E_3) = \lambda(E_2) - \lambda(E_2 \setminus E_3)> \alpha - \alpha = 0$. Let us summarize, $E_3 \subset [0,1]$ is closed with positive measure and we have the following on $E_3$:
	\begin{itemize}
			\item[--] Both $f\circ \gamma$ and $\gamma$ have derivatives and $\dot{\gamma} \neq 0$.
			\item[--] $\frac{\di}{\di t}( f\circ\gamma)  + \epsilon \leq \left\langle \dot{\gamma}, g \right\rangle$.
			\item[--] $g$ restricted to $E_3$ is continuous.
	\end{itemize}
Lebesgue density theorem (see, e.g., \cite[Theorem 1.35]{evans1992measure}) ensures that almost all $t \in E_3$ have density $1$, that is,
$ \lambda([t-\delta,t + \delta] \cap E_3) / \lambda([t-\delta,t + \delta]) \to 1$ as $\delta \to 0$.
Since $E_3$ has positive measure, there exists $\bar{t} \in E_3$, a point of density $1$ in $E_3$. We have for all $t \neq \bar{t}$, such that $\gamma(t) \neq \gamma(\bar{t})$, 
\begin{align*}
	 &\frac{f(\gamma(t)) - f(\gamma(\bar{t}))}{(t-\bar{t})} = \frac{\|\gamma(t) - \gamma(\bar{t})\|}{t-\bar{t}}\frac{f(\gamma(t)) - f(\gamma(\bar{t}))}{\|\gamma(t) - \gamma(\bar{t})\|} \\
	 =\;& \frac{\|\gamma(t) - \gamma(\bar{t})\|}{t-\bar{t}}\ \left(\frac{f(\gamma(t)) - f(\gamma(\bar{t})) - \left\langle g(t), \gamma(t) - \gamma(\bar{t})\right\rangle}{\|\gamma(t) - \gamma(\bar{t})\|} + \frac{\left\langle g(t), \gamma(t) - \gamma(\bar{t})\right\rangle}{\|\gamma(t) - \gamma(\bar{t})\|}\right)\\
	 	=\;& \frac{\|\gamma(t) - \gamma(\bar{t})\|}{(t-\bar{t})}\ \left(\frac{f(\gamma(t)) - f(\gamma(\bar{t})) - \left\langle g(t), \gamma(t) - \gamma(\bar{t})\right\rangle}{\|\gamma(t) - \gamma(\bar{t})\|}  \right) + \frac{\left\langle g(t), \gamma(t) - \gamma(\bar{t})\right\rangle}{t -\bar{t}}.
\end{align*}
Letting $t \to \bar{t}$ with $t \in E_3$, $t \neq \bar{t}$ and $\gamma(t) \neq \gamma(\bar{t})$, which is possible because $\bar{t}$ has density $1$ in $E_3$ and $\dot{\gamma}(\bar{t}) \neq 0$, we have
		\begin{align*}
			 &\frac{f(\gamma(t)) - f(\gamma(\bar{t}))}{(t-\bar{t})} \to \frac{\di}{\di t}( f\circ \gamma)(\bar{t}),
			 &\frac{\|\gamma(t) - \gamma(\bar{t})\|}{(t-\bar{t})} \to \|\dot{\gamma}(\bar{t})\| \\
			 &\frac{f(\gamma(t)) - f(\gamma(\bar{t})) - \left\langle g(t), \gamma(t) - \gamma(\bar{t})\right\rangle}{\|\gamma(t) - \gamma(\bar{t})\|}  \to 0,
			 &\frac{\left\langle g(t), \gamma(t) - \gamma(\bar{t})\right\rangle}{t -\bar{t}} \to \left\langle g(\bar{t}), \dot{\gamma}(\bar{t})\right\rangle,
	\end{align*}
	where the two identities on the first line follow from the differentiability of $f\circ \gamma$ and $\gamma$ at $\bar{t} \in E_3$, the third one stems from the semismooth property of generalized gradients  (\Cref{def:norkinGradient}) while the last one is by differentiability of $\gamma$ and continuity of $g$ restricted to $E_3$ at $\bar{t}$.
	We obtain that  $  \frac{\di}{\di t} (f\circ \gamma)(\bar{t}) = \left\langle g(\bar{t}), \dot{\gamma}(\bar{t})\right\rangle \geq \frac{\di}{\di t} (f\circ \gamma)(\bar{t}) + \epsilon$, where the equality follows by the previous limit and the inequality is because $\bar{t} \in E_3$. This is contradictory since $\epsilon > 0$, which concludes the proof.	
\end{proof}

Functions differentiable in the generalized sense are path-differentiable. In the semialgebraic case, both notions coincide \cite{davis2021conservative}, but the inclusion is strict in general.
\begin{proposition}
		Consider the closed set $C \subset [-1,1]$ defined through $C = \{1/k \ | \  k \in \Z, k\neq 0\}\cup \{0\}$.  Then the distance function $F$ to $C$ is path-differentiable but not differentiable in the generalized sense.
\end{proposition}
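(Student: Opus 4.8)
The plan is to prove the two assertions separately: path-differentiability of $F=\operatorname{dist}(\cdot,C)$, which I expect to be the delicate part, by checking that the Clarke subgradient $\partial^c F$ obeys the chain rule of \Cref{def:conservativeField}; and the failure of generalized differentiability by contradicting the semismoothness quotient of \Cref{def:norkinGradient} at the origin. First I would record the geometry of $F$. Since $C=-C$ and $0\in C$, the function $F$ is even and $1$-Lipschitz. On each interval $(\tfrac1{k+1},\tfrac1k)$, $k\ge 1$, the two bracketing points are the closest points of $C$, so $F(x)=\min\{x-\tfrac1{k+1},\,\tfrac1k-x\}$ is a tent of slopes $\pm1$ peaking at $m_k=\tfrac12(\tfrac1k+\tfrac1{k+1})$ with height $F(m_k)=\tfrac1{2k(k+1)}$. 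Hence $F$ is locally affine — thus $C^\infty$ with singleton subgradient — off the countable, closed, Lebesgue-null set $\Sigma$ consisting of $0$, the points $\pm1/k$, and the peaks $\pm m_k$; at each kink the one-sided slopes are $+1$ and $-1$, and near $0$ the slopes of the accumulating tents oscillate between $\pm1$, so that $\partial^c F=[-1,1]$ throughout $\Sigma$. Note that $F(x)\le x^2$ near $0$, so $F$ is in fact differentiable at $0$ although $\partial^c F(0)=[-1,1]$.

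To prove path-differentiability I would fix an absolutely continuous $\gamma\colon[0,1]\to\R$ and verify the chain rule along it. The composition $F\circ\gamma$ is absolutely continuous, hence differentiable a.e. Where $\gamma(t)\notin\Sigma$, the subgradient is the singleton $\{F'(\gamma(t))\}$ and, $F$ being $C^\infty$ near $\gamma(t)$, the ordinary chain rule $(F\circ\gamma)'(t)=F'(\gamma(t))\dot\gamma(t)$ holds. On $\gamma^{-1}(\Sigma)$, where $\partial^c F=[-1,1]$ is multivalued, I would invoke the Banach indicatrix (area) formula: since $\Sigma$ is null, $\int_{\gamma^{-1}(\Sigma)}|\dot\gamma(t)|\,\di t$ equals the integral over $\Sigma$ of the number of preimages and therefore vanishes, so $\dot\gamma=0$ almost everywhere on $\gamma^{-1}(\Sigma)$; there $(F\circ\gamma)'(t)=0=\langle v,\dot\gamma(t)\rangle$ for every $v\in\partial^c F(\gamma(t))$, and the chain rule holds trivially. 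Piecing the two cases together shows that $\partial^c F$ is a conservative gradient, i.e.\ $F$ is path-differentiable. The step I expect to be the main obstacle is precisely this absorption of the multivalued part of $\partial^c F$ over $\gamma^{-1}(\Sigma)$ (in particular at the accumulation point $0$, where $F$ is differentiable yet Clarke-multivalued); it hinges entirely on the one-dimensional fact that an absolutely continuous curve has vanishing derivative almost everywhere on the preimage of a null set.

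For the failure of generalized differentiability I argue by contradiction: suppose $D$ is a generalized gradient of $F$ in the sense of \Cref{def:norkinGradient}. Since every generalized gradient contains the Clarke subgradient everywhere, $D(m_k)\supseteq\partial^c F(m_k)=[-1,1]$, so in particular $-1\in D(m_k)$. Evaluating the semismoothness quotient at $x=0$ along the sequence $y=m_k\to 0$ with $g=-1\in D(m_k)$ gives
\[
\frac{F(m_k)-F(0)-g\,(m_k-0)}{|m_k|}=\frac{F(m_k)}{m_k}+1=\frac{1}{2k+1}+1\;\xrightarrow[k\to\infty]{}\;1 .
\]
Hence the $\limsup$ defining differentiability in the generalized sense at $x=0$ is at least $1>0$, contradicting its required value $0$. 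Therefore $F$ admits no generalized gradient, and together with the previous paragraph this proves the proposition.
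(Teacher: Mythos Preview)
Your proof is correct. The overall strategy matches the paper's: verify the chain rule for $\partial^c F$ directly (exploiting that $\partial^c F$ is a singleton off a Lebesgue-null set), and contradict semismoothness at $0$ via a sequence of kinks accumulating there.

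The executions differ in two minor ways. For path-differentiability, the paper cites the Serrin--Varberg change-of-variables formula $\int_{\gamma(\alpha)}^{\gamma(\beta)} f(x)\,dx=\int_\alpha^\beta f(\gamma(s))\dot\gamma(s)\,ds$ applied to any measurable selection $f\in\partial^c F$, which bypasses an explicit case split; your route via the Banach indicatrix formula to force $\dot\gamma=0$ a.e.\ on $\gamma^{-1}(\Sigma)$ is equivalent and arguably more transparent, since it makes explicit why the multivalued part of $\partial^c F$ is harmless along curves. For the failure of generalized differentiability, the paper takes the simpler witness sequence $y=1/k$ (valleys of $F$, where $F(1/k)=0$), so the semismoothness quotient is exactly $1$ for every $k$; your choice $y=m_k$ (peaks) gives $1+\tfrac{1}{2k+1}\to 1$, which works just as well but carries an extra term. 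Either sequence yields the desired contradiction via the inclusion $\partial^c F\subset D$.
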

\begin{proof}
	First let us recall a substitution formula for absolutely continuous function \cite[Corollary 7]{serrin1969general}. If $g \colon \R \to \R$ is absolutely continuous and $f \colon \R \to \R$ is measurable and bounded, then for all $\alpha,\beta$, $\int_{g(\alpha)}^{g(\beta)} f(x) dx = \int_\alpha^\beta f(g(s)) \dot{g}(s) ds$.
	
	It is clear that $\partial^c F$ is locally constant ($+1$ or $-1$) out of a closed countable set (the set $C$ and its cut locus). Therefore, choosing $f$ to be any measurable selection in $\partial^c F$, the previous formula allows concluding that $F$ is path-differentiable. Indeed $F$ is path-differentiable if and only it satisfies the change of variable formula for any absolutely continuous $g$, which is the case because $F$ is $1$-Lipschitz so that $|f| \leq 1$.
	
	On the other hand,  $F(0) = 0$ and for all $k \in \N^*$, $F(1/k) = 0$ and $\partial^c F(1/k) = [-1,1]$ so that $-1 \in \partial^c F(1/k)$. The equality $\frac{F(1/k) - F(0) - \left\langle -1, 1/k- 0\right\rangle}{\|1/k-0\|} = 1$
	contradicts the semismoothness property for the Clarke subgradient of $F$. Since $F$ is differentiable in the generalized sense if and only if its Clarke subgradient is a  generalized gradient, we conclude that $F$ is not differentiable in the generalized sense at $0$.
\end{proof}
\section*{Acknowledgements}
This work was funded by the AI Interdisciplinary Institute ANITI funding under the grant agreement ANR-19-PI3A-0004, Air Force Office of Scientific Research, Air Force Material Command, USAF, under grant numbers FA9550-19-1-7026,  and ANR MaSDOL 19-CE23-0017-01.  J. Bolte  also acknowledges the support of ANR Chess, grant ANR-17-EURE-0010 and TSE-P.

\bibliographystyle{plain}
\bibliography{references}
\end{document}